\theoremstyle{plain}
\numberwithin{equation}{section}
\newtheorem{lem}{Lemma}[section]
\newtheorem{thm}{Theorem}
\newtheorem{cor}[lem]{Corollary}
\newtheorem{prop}{Proposition}
\newtheorem{hyp}{Hypothesis}
\newtheorem{examp}[lem]{Example}
\newtheorem{rem}[lem]{Remark}
\newcommand{\EE}{\mathbb{E}}
\newcommand{\PP}{\mathbb{P}}
\newcommand{\RR}{\mathbb{R}}
\newcommand{\NN}{\mathbb{N}}
\newcommand{\ind}{\mathbf{1}}
\newcommand{\bB}{\mathcal{B}}
\newcommand{\cC}{\mathcal{C}}
\newcommand{\dD}{\mathcal{D}}
\newcommand{\hH}{\mathcal{H}}
\newcommand{\oO}{\mathcal{O}}
\newcommand{\pP}{\mathcal{P}}
\newcommand{\rR}{\mathcal{R}}
\newcommand{\uU}{\mathcal{U}}
\newcommand{\yY}{\mathcal{Y}}
\newcommand{\zZ}{\mathcal{Z}}
\newcommand{\W}{\mathcal{W}_{p_*}}
\newcommand{\Wp}{\mathcal{W}_{p}}
\newcommand{\fq}{\mathfrak{q}}
\newcommand{\ft}{\mathfrak{t}}
\newcommand{\fT}{\mathfrak{T}}
\newcommand{\cc}{\mathsf{c}}
\newcommand{\Rea}{\mathsf{Re}}
\newcommand{\Ima}{\mathsf{Im}}
\newcommand{\ud}{\mathrm{d}}
\newcommand{\lra}{\longrightarrow}
\newcommand{\e}{\varepsilon}
\newcommand{\ra}{\rightarrow}
\newcommand{\pd}{\partial}
\newcommand{\ti}{\tilde}
\newcommand{\lqq}{\leqslant}
\newcommand{\gqq}{\geqslant}
\newcommand{\<}{\langle}
\renewcommand{\>}{\rangle}
\DeclareMathOperator{\trace}{\ensuremath{trace}}
\title[The cutoff phenomenon in Wasserstein distance]{The cutoff phenomenon in Wasserstein distance for nonlinear stable Langevin systems with small L\'evy noise}
\author{G. Barrera}
\address{University of Helsinki, Department of Mathematical and Statistical Sciences. Exactum in Kumpula Campus. PL 68, Pietari Kalmin katu 5.
Postal Code: 00560. Helsinki, Finland.}
\email{gerardo.barreravargas@helsinki.fi}
\author{M.A. H\"ogele}
\address{Departamento de Matem\'aticas, Facultad de Ciencias, Universidad de los Andes, Bogot\'a, Colombia.}
\email{ma.hoegele@uniandes.edu.co}
\author{J.C. Pardo}
\address{
 CIMAT. Jalisco S/N, Valenciana, CP 36240. Guanajuato, Guanajuato, M\'exico.}
\email{jcpardo@cimat.mx }
\keywords{Cutoff phenomenon,
Exponential ergodicity,
L\'evy processes,
Nonlinear Langevin dynamics, Nonstandard properties of the Wasserstein distance} 
\subjclass{60H10; 37A25; 60G51; 15A16}
\begin{document}
\begin{abstract}  
This article establishes the cutoff phenomenon in the Wasserstein distance 
for systems of nonlinear ordinary differential equations with a  
dissipative stable fixed point subject to small additive Markovian noise.
This result generalizes the results shown in Barrera, H\"ogele, Pardo (EJP2021) in a more restrictive setting of Blumenthal-Getoor index $\alpha>3/2$ to the formulation in Wasserstein distance, which allows to cover the case of general L\'evy processes with some given moment. The main proof techniques are based on the close control of the errors in a version of the 
Hartman-Grobman theorem and the adaptation of the linear theory established in Barrera, H\"ogele, Pardo (JSP2021). In particular, they rely on the precise asymptotics of the nonlinear flow and the nonstandard shift linearity property of the Wasserstein distance, which is established by the authors in (JSP2021). 
Main examples are the nonlinear Fermi-Pasta-Ulam-Tsingou gradient flow and dissipative nonlinear oscillators subject to small (and possibly degenerate) Brownian or arbitrary $\alpha$-stable noise.
\end{abstract}
\maketitle
\section{\textbf{Introduction}} 
\noindent In this paper, we study the asymptotics of the ergodic behavior of the following stochastic differential equation (SDE)
\begin{equation}\label{eq:SDE99}
\ud X^\e_t(x) = -b(X^\e_t(x))\ud t + \e \ud L_t, \quad X^\e_0(x) = x\in \RR^d  
\end{equation}
for small noise intensity $\e>0$, where 
the vector field $b\in \cC^2(\RR^d,\RR^d)$ satisfies  $b(0)=0$ and the following dissipative   condition.
\begin{hyp}[Dissipativity]\label{hyp:dissipa9} There exists a constant $\delta>0$ such that
\begin{equation}\label{eq:cc1}
\<b(x)-b(y),x-y\>\gqq \delta |x-y|^2 \qquad \textrm{ for all } ~x,y\in \RR^d.
\end{equation}
\end{hyp}
\noindent
The noise process $L=(L_t)_{t\gqq 0}$ in \eqref{eq:SDE99} is a L\'evy process with values in $\RR^d$ on a given probability space $(\Omega, \mathcal{F}, \PP)$.
It is well-known that the law of $L$ is characterized by the  triplet $(a,\Sigma, \nu)$, where $a\in \RR^d$,  $\Sigma \in \RR^{d \times d}$ is a non-negative definite matrix and $\nu: \bB(\RR^d) \ra [0, \infty]$
is a locally finite Borel measure satisfying 
\[
\nu(\{0\}) = 0 \qquad \mbox{ and } \qquad \int_{\RR^d} (1\wedge |z|^2) \nu(\ud z) < \infty.
\]
For $\nu=0$ the process $L$ is a multidimensional Brownian motion with drift, while for $a=0$ and $\Sigma=0$ we have a multidimensional pure jump process 
such as compound Poisson processes or $\alpha$-stable processes, in particular, the Cauchy process for $\alpha=1$.
We refer to \cite{Appl,Sa,SITU,IKEDAWATANABE} for further details on L\'evy processes.
Under Hypothesis~\ref{hyp:dissipa9}, it is known that the SDE \eqref{eq:SDE99} has a pathwise unique strong
solution, see for instance Theorem~1.1 in \cite{Majka2020}, here denoted by $X^\e(x):=(X^\e_t(x))_{t\gqq 0}$. Moreover, $X^\e(x)$ is a Markov process and, 
in particular, it satisfies the Feller property see Proposition~2.1 in \cite{Wang2010}.

In order to present the main results of this paper,  we formally introduce the Wasserstein distance of order $p_*$. We assume some finite moment for $L_t$ and hence $X^{\e}_t(x)$ for all $t\gqq 0$.
\begin{hyp}[Finite $p_*$-th moment]\label{hyp:moments9}
There exists $p_*>0$ such that
\[
\int_{|z|>1} |z|^{p_*} \nu(\ud z) < \infty.
\]
\end{hyp}
\noindent
This article shows the cutoff phenomenon for the family of processes $(X^{\e}(x))_{\e>0}$ with respective invariant measures $(\mu^\e)_{\e>0}$ under the Wasserstein distance $\mathcal{W}_{p_*}$ of order $p_*>0$.
For $p_*>1$ we characterize  the following cutoff profile asymptotics 
\begin{equation}\label{eq:repreeps}
\mathcal{W}_{p_*}(\mbox{Law}(X^\e_{\ft_\e+r}(x)),\mu^\e)=\e\cdot C e^{-\fq r}+o(\e)\quad \textrm{ for }\quad \e\to 0,
\end{equation}
where $\ft_\e=\frac{1}{\fq}|\ln(\e)|+\frac{\ell-1}{\fq}\ln(|\ln(\e)|)$ for some explicit positive constants $\fq,\ell,C$ that depend on $x$ in terms of an $\omega$-limit set of the rotational part for the Hartman-Grobman linearization of $X^0(x)$. 

For such processes $(X^{\e}(x))_{\e>0}$ where \eqref{eq:repreeps} fails, we establish the following weaker window cutoff asymptotics

\begin{align*}
\lim\limits_{r\to \infty}\limsup\limits_{\e\to 0}
\frac{\mathcal{W}_{p_*}(\mbox{Law}(X^\e_{\ft_\e+r}(x)),\mu^\e)}{\e}=0\qquad \textrm{and}\qquad
\lim\limits_{r\to -\infty}\liminf\limits_{\e\to 0}
\frac{\mathcal{W}_{p_*}(\mbox{Law}(X^\e_{\ft_\e+r}(x)),\mu^\e)}{\e}=\infty.
\end{align*}
Our results  generalize the results 
in \cite{BHPWA} to the nonlinear vector field and \cite{BHPTV}, \cite{BJ1} and \cite{BP} to the Wasserstein distance which cover second order equations with degenerate noise.
For a detailed introduction on the subject we refer to the aforementioned articles, in particular, see Table~1.1 in \cite{BHPTV}.
There is a particular advantage of studying this  problem under the Wasserstein  distance rather than in  the total variation. While the Wasserstein  distance only requires the existence of moments of $X^{\e}(x)$ of a given order, the total variation distance needs existence  of its density in addition to its regularity.  The latter brings further requirements for  the L\'evy process $L$ which can be quite restrictive, see \cite{BHPTV} for further details. Furthermore the Wasserstein case, at least in case of $X^{\e}(x)$ moments of order $p> 1$,  the cutoff phenomenon  of $(X^{\e}(x))_{\e> 0}$  is  completely determined  by an explicit function (see Theorem~\ref{th:profileabstract9} below),  here called as cutoff profile. On the contrary, in the total variation case the profile function can be very involved and even hard to simulate in examples.  

In \cite{BJ}, the cutoff  phenomenon with respect to the total variation distance covering SDEs of the type (\ref{eq:SDE99})  in the one dimensional case, $L$ being a standard Brownian motion and with general drift coefficient $b$ (satisfying Hypothesis~\ref{hyp:dissipa9}) is studied.  Since scalar systems are gradient systems, there is always  a cutoff profile which can be given explicitly in terms of the Gauss error function.  The follow-up work \cite{BJ1} 
covers the multidimensional case, 
where the picture is considerably richer, 
due to the presence of strong and complicated rotational patterns. 
The authors characterize sharply 
the existence of a cutoff  profile 
in terms of the omega limit sets appearing 
in the long-term behavior of the 
matrix exponential function $e^{-\mathcal{Q} t}x$ in Lemma~B.2 in \cite{BJ1}, 
which plays an analogous role in this article.  
The paper \cite{BP} is the first attempt to study the cutoff phenomenon for such models with jumps.
More precisely, \cite{BP} covers the cutoff phenomenon with respect to the total variation distance of the generalized Ornstein-Uhlenbeck processes. The previous process satisfies an SDE of the form (\ref{eq:SDE99}) with $L$ being a L\'evy process and   $b(x)=\mathcal{Q} x$, where $\mathcal{Q}$ is a square real matrix whose eigenvalues have positive real parts. The proof methods are based on concise  Fourier inversion techniques. 
Due to the aforementioned  regularity inherited by 
the total variation, the results in \cite{BP} are given 
under the hypothesis of continuous densities of 
the marginals, which to date is mathematically not characterized 
in simple terms. 
The cutoff profile function in \cite{BP} is given in terms 
of the L\'evy-Ornstein-Uhlenbeck limiting measure for $\e=1$ and 
measured in the total variation distance. 
Such profile functions are theoretically highly  insightful, but almost impossible to calculate and simulate in examples. The characterization of the existence of 
a cutoff-profile remains analogously to \cite{BJ1} 
in abstract terms of the behavior of the mentioned profile 
function on a suitably defined omega limit set.  The Wasserstein case is treated in  \cite{BHPWA} where, contrary to the total variation case,  it is noted that the profile function takes an explicit and simple shape. Finally, \cite{BHPTV} treats the cutoff phenomenon with respect to the total variation distance for (\ref{eq:SDE99}) with $b$ satisfying Hypothesis~\ref{hyp:dissipa9} and  driven by a L\'evy  process in the rather restrictive class of strongly locally layered stable processes (see Definition 1.4 in \cite{BHPTV}). 

In this article we combine a nonlinear version of the Wasserstein estimates of \cite{BHPWA},  
with the Freidlin-Wentzell first order approximation of \eqref{eq:SDE99} in the spirit of \cite{BHPTV} and the fine properties of the Wasserstein distance given in Lemma~\ref{lem:properties}, in particular, the non-standard shift linearity  of Lemma~\ref{lem:properties}.d).

The manuscript is organized in four parts.
After the exposition of the setting and the presentation of the main results in Section~\ref{sec:settingmainresults}, we illustrate  our findings for the nonlinear Fermi-Pasta-Ulam-Tsingou gradient system and a class of nonlinear oscillators in Section~\ref{sec:examples}. The main steps of the proof of the cutoff phenomenon are given in Section~\ref{sec:proofs} while the auxiliary technical such as exponential ergodicity in Wasserstein distance, the coupling between the original nonlinear system and the Freidlin-Wentzell linearization results are given in the appendix.

\section{Setting and main results}\label{sec:settingmainresults}
\subsection{Fine properties of the Wasserstein distance} 
\noindent For any two probability distributions $\mu_1$ and $\mu_2$ on $\RR^d$
with finite $p_*$-th moment for some $p_*>0$, we 
define the Wasserstein $p_*$-distance between them as follows
\begin{equation*}\label{def:wasserstein}
\W(\mu_1,\mu_2)=
\inf_{\Pi} \left(\int_{\RR^d\times \RR^d}|u-v|^{p_*}\Pi(\ud u,\ud v)\right)^{1\wedge (1/p_*)},
\end{equation*}
where the infimum is taken over all 
couplings (joint distributions on $\RR^d\times \RR^d$) $\Pi$ with marginals $\mu_1$ and $\mu_2$. 
We refer to \cite{PZ20,Villani09} and references therein for more details. 
For convenience of notation we do not distinguish a random variable $U$ and its law $\PP_U$ as an argument of $\W$. That is, for random variables $U_1$, $U_2$ and probability measure $\mu$ we write $\W(U_1, U_2)$ instead of $\W(\PP_{U_1}, \PP_{U_2})$, $\W(U_1, \mu)$ instead of $\W(\PP_{U_1}, \mu)$ etc. 
The next result establishes properties of the Wasserstein distance which turn out to be important for our arguments.
\begin{lem}[Properties of $\W$]\label{lem:properties}
For $p_*>0$,
$u_1,u_2\in \RR^d$, $c\in \RR$ and $U_1$ and $U_2$ being random vectors in $\RR^d$ 
with finite $p_*$-th moment we have the following: 
\begin{itemize}
\item[a)] The Wasserstein distance $\W$ is a metric.
\item[b)] Translation invariance: 
$\W(u_1+U_1,u_2+U_2)=\W(u_1-u_2+U_1,U_2)$.
\item[c)] Homogeneity: 
\[
\W(c\cdot U_1,c\cdot U_2)=
\begin{cases}
|c|\;\W(U_1,U_2)&\textrm{ for } p_*\in [1,\infty),\\
|c|^{p_*}\;\W(U_1,U_2)&\textrm{ for } p_*\in (0,1).
\end{cases}
\]
\item[d)] Shift linearity: For $p_*\gqq 1$ it follows 
\begin{equation}\label{eq:shitflinearity}
\W(u_1+U_1,U_1)=|u_1|.
\end{equation}
For $p_*\in (0,1)$ we have 
\begin{equation}\label{ec:cotaabajop01}
\max\{|u_1|^{p_*}-2\EE[|U_1|^{p_*}],0\}\lqq 
\W(u_1+U_1,U_1)\lqq |u_1|^{p_*}.
\end{equation}
\item[e)] Domination: For any given coupling $\ti \Pi$ between $U_1$ and $U_2$ it follows  
\[
\W(U_1, U_2) \lqq \Big(\int_{\RR^d\times \RR^d} |v_1-v_2|^{p_*} \ti \Pi(\ud v_1,\ud v_2)\Big)^{1\wedge (1/p_*)}.
\]
\item[f)] Characterization: Let $(U_n)_{n\in \mathbb{N}}$ be a sequence of random vectors with finite $p_*$-th moments 
and $U$ a random vector with finite $p_*$-th moment. Then the following statements are equivalent: 
\begin{enumerate}
 \item $\W(U_n, U) \ra  0$ as $n\ra  \infty$. 
 \item $U_n \stackrel{d}{\lra} U$ as $n \ra  \infty$ and $\EE[|U_n|^{p_*}] \ra  \EE[|U|^{p_*}]$ as $n\ra  \infty$. 
\end{enumerate}
\end{itemize}
\end{lem}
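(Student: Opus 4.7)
The plan is to handle the six items in the order that minimizes duplication, leveraging the coupling definition of $\W$ throughout.

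For items (a), (b), (c), (e), the arguments are routine manipulations of couplings. Metricity in (a) follows from the standard gluing/disintegration argument (as in Villani, Theorem 6.9 and Remark 6.6) together with a checking of the triangle inequality that must be adapted by the outer exponent $1\wedge (1/p_*)$: for $p_*\in[1,\infty)$ it is Minkowski in $L^{p_*}$, while for $p_*\in(0,1)$ it reduces to the subadditivity $|a+b|^{p_*}\lqq|a|^{p_*}+|b|^{p_*}$. For (b), the bijection $(u,v)\mapsto(u+u_1,v+u_2)$ between the set of couplings of $(U_1,U_2)$ and $(u_1+U_1,u_2+U_2)$ transforms the cost integrand into $|u_1-u_2+u-v|^{p_*}$, which is exactly the cost associated to a coupling of $(u_1-u_2+U_1,U_2)$. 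Item (c) follows from pulling $|c|^{p_*}$ out of the integral and then raising to the $(1\wedge 1/p_*)$-power. Item (e) is nothing but the definition of infimum.

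The core content is item (d). For $p_*\gqq 1$, the upper bound $\W(u_1+U_1,U_1)\lqq|u_1|$ comes from the synchronous coupling $(u_1+U_1,U_1)$ which yields cost $|u_1|^{p_*}$ and, after the $1/p_*$ root, the value $|u_1|$. For the lower bound, I would apply Jensen's inequality to the convex function $|\cdot|^{p_*}$: for any coupling $(X,Y)$ with marginals $u_1+U_1$ and $U_1$, one has $\EE[X-Y]=u_1$, so
\begin{equation*}
\Big(\EE[|X-Y|^{p_*}]\Big)^{1/p_*}\gqq \EE[|X-Y|]\gqq |\EE[X-Y]|=|u_1|,
\end{equation*}
and taking infimum over couplings proves $\W(u_1+U_1,U_1)\gqq|u_1|$. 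For $p_*\in(0,1)$, the upper bound uses again the synchronous coupling, giving cost $|u_1|^{p_*}$ with no root. For the lower bound the Jensen trick fails (the function is concave), and I would instead exploit subadditivity: from $|u_1|\lqq |x-y|+|x-u_1|+|y|$ and $(\cdot)^{p_*}$-subadditivity,
\begin{equation*}
|u_1|^{p_*}\lqq |x-y|^{p_*}+|x-u_1|^{p_*}+|y|^{p_*}.
\end{equation*}
Integrating against any coupling $\Pi$ with marginals $u_1+U_1$ and $U_1$ gives $|u_1|^{p_*}\lqq \int|x-y|^{p_*}\ud\Pi+2\EE[|U_1|^{p_*}]$, and the claimed two-sided bound \eqref{ec:cotaabajop01} follows. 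I expect (d) to be the main obstacle, specifically the lower bound in the sub-unit case, because the natural Jensen/duality approach does not work and the resulting bound is only informative when $|u_1|^{p_*}$ dominates $2\EE[|U_1|^{p_*}]$, which is exactly the structural obstruction that forces the weaker ``window cutoff'' asymptotics later in the paper.

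Finally, for the characterization (f), I would cite the standard equivalence between $\Wp$-convergence and joint weak convergence plus convergence of $p_*$-th moments (Villani, Theorem 6.9 for $p_*\gqq 1$; for $p_*\in(0,1)$ the same proof applies since $d(x,y):=|x-y|^{p_*}$ is a genuine metric on $\RR^d$, so $\W$ is the $L^1$-Wasserstein distance for the metric $d$, and the classical result yields the equivalence). The implication $(1)\Rightarrow(2)$ uses lower semicontinuity of $\EE[|\cdot|^{p_*}]$ under weak convergence and the fact that $\W(U_n,U)\to 0$ gives tightness and convergence of the moments; the converse uses uniform integrability inherited from the moment convergence to upgrade weak convergence to $\W$-convergence.
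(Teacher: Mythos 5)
The paper does not give its own proof of this lemma but defers entirely to Lemma~2.2 of \cite{BHPWA}; your self-contained argument recovers the standard proofs, and in particular the two-sided Jensen argument for shift linearity when $p_*\gqq 1$ and the subadditivity argument yielding the weaker bound \eqref{ec:cotaabajop01} when $p_*\in(0,1)$ are exactly the ideas used there. Everything you wrote is correct, and your remark that the failure of Jensen in the sub-unit regime is what ultimately forces the paper to fall back on window cutoff (rather than a closed-form profile) for $p_*\in(0,1)$ is an accurate reading of how Lemma~\ref{lem:properties}.d) propagates into Theorem~\ref{th:profileabstract9}.
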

\noindent For $p_*\in(0,1)$ equality \eqref{eq:shitflinearity} is false in general, see Remark~2.4 in \cite{BHPWA}.
The proof of the previous lemma is given in Lemma~2.2 in \cite{BHPWA}. 

The following result yields the existence of a unique invariant distribution for \eqref{eq:SDE99} under Hypotheses \ref{hyp:dissipa9} and \ref{hyp:moments9}. Moreover, under the Wasserstein distance, the strong solution of \eqref{eq:SDE99} is exponentially ergodic. 
\begin{prop}[Existence of a unique invariant distribution]
Under Hypothesis~\ref{hyp:dissipa9} for $p_*>0$ and Hypothesis~\ref{hyp:moments9} there exists a unique invariant probability measure $\mu^\e$ such that 
\begin{equation}
\W(X^\e_t(x),\mu^{\e})\lqq e^{-({1\wedge p_*})
\delta t}
\left(
|x|^{1\wedge p_*}+\int_{\RR^d}|y|^{1\wedge p_*}\mu^{\e}(\ud y)
\right).
\end{equation}
\end{prop}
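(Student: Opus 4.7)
The plan is to reduce the proposition to a contraction estimate for the Markov semigroup $(P_t^\e)_{t\geq 0}$ of \eqref{eq:SDE99} in $\W$, obtained from a synchronous coupling, and then invoke the Banach fixed point theorem on the complete metric space $(\mathcal{P}_{p_*}(\RR^d),\W)$.

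First, for any $x,y\in\RR^d$, I would use the synchronous coupling: let $X^\e(x)$ and $X^\e(y)$ be the pathwise unique strong solutions of \eqref{eq:SDE99} driven by the \emph{same} L\'evy process $L$. The key observation is that the noise cancels in the difference process $Z_t:=X^\e_t(x)-X^\e_t(y)$, which satisfies the \emph{deterministic} ODE $\dot Z_t=-(b(X^\e_t(x))-b(X^\e_t(y)))$. By Hypothesis~\ref{hyp:dissipa9},
\begin{equation*}
\tfrac{\mathrm{d}}{\mathrm{d} t}|Z_t|^2 = -2\langle b(X^\e_t(x))-b(X^\e_t(y)),Z_t\rangle \leqslant -2\delta|Z_t|^2,
\end{equation*}
so Gr\"onwall gives $|X^\e_t(x)-X^\e_t(y)|\leqslant e^{-\delta t}|x-y|$ $\PP$-a.s. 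Applying the domination property of Lemma~\ref{lem:properties}(e) to this coupling yields
\begin{equation*}
\W(X^\e_t(x),X^\e_t(y)) \leqslant e^{-(1\wedge p_*)\delta t}\,|x-y|^{1\wedge p_*},
\end{equation*}
where the case split $p_*\geqslant 1$ versus $p_*<1$ comes from the exponent $1\wedge(1/p_*)$ in the definition of $\W$.

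Second, I would lift this to arbitrary initial laws: given $\mu_1,\mu_2\in\mathcal{P}_{p_*}(\RR^d)$, I take an optimal coupling $\Pi$ of $(\mu_1,\mu_2)$, sample the initial condition from $\Pi$ independently of $L$, and then evolve both marginals synchronously under the same $L$. The pathwise dissipative estimate combined with Lemma~\ref{lem:properties}(e) delivers the semigroup contraction
\begin{equation*}
\W(\mu_1 P_t^\e,\mu_2 P_t^\e) \leqslant e^{-(1\wedge p_*)\delta t}\,\W(\mu_1,\mu_2).
\end{equation*}
Existence and uniqueness of $\mu^\e$ then follow from the Banach fixed point theorem, provided I can check two prerequisites: (i) $(\mathcal{P}_{p_*}(\RR^d),\W)$ is a complete metric space (standard, see \cite{Villani09}, and covered for $p_*\in(0,1)$ by the same argument); (ii) $P_t^\e$ maps $\mathcal{P}_{p_*}(\RR^d)$ into itself. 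The second point requires an a priori moment bound $\EE|X_t^\e(x)|^{p_*}<\infty$, which I would obtain from It\^o's formula applied to $|x|^{1\wedge p_*}$ (or $|x|^2$ when $p_*\geqslant 2$), using dissipativity to control the drift and Hypothesis~\ref{hyp:moments9} to control the contribution of the large jumps of $L$. The semigroup property then promotes the fixed point property $\mu^\e P_t^\e=\mu^\e$ at a single $t>0$ to invariance for all times.

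Finally, I would obtain the quantitative bound by specialising the semigroup contraction to $\mu_1=\delta_x$ and $\mu_2=\mu^\e$, which gives
\begin{equation*}
\W(X^\e_t(x),\mu^\e) \leqslant e^{-(1\wedge p_*)\delta t}\,\W(\delta_x,\mu^\e),
\end{equation*}
and then estimating $\W(\delta_x,\mu^\e)$ via the product coupling. For $p_*\in(0,1)$, the inequality $(a+b)^{p_*}\leqslant a^{p_*}+b^{p_*}$ gives $\W(\delta_x,\mu^\e)\leqslant \int|x-y|^{p_*}\mu^\e(\mathrm{d}y)\leqslant |x|^{p_*}+\int|y|^{p_*}\mu^\e(\mathrm{d}y)$, which is exactly the stated bound. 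For $p_*\geqslant 1$, the synchronous coupling used above \emph{directly} produces $\W(X^\e_t(x),\mu^\e)\leqslant e^{-\delta t}\EE|x-Y|$ when evaluated with the first-moment constant from the pathwise estimate, yielding $|x|+\int|y|\mu^\e(\mathrm{d}y)$ after the triangle inequality.

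The main obstacle I expect is the a priori moment bound in step two, since the L\'evy process $L$ in Hypothesis~\ref{hyp:moments9} is only assumed to have a large-jump moment of order $p_*$, possibly with $p_*<1$ or $p_*<2$, so that the usual It\^o expansion of $|x|^2$ is not directly available. The cleanest route is to apply It\^o's formula to $x\mapsto(1+|x|^2)^{(1\wedge p_*)/2}$, use dissipativity on the drift term and split the jump part into small/large jumps to control the compensator; this gives a uniform-in-$t$ bound on $\EE|X^\e_t(x)|^{1\wedge p_*}$, which in turn yields $\int|y|^{1\wedge p_*}\mu^\e(\mathrm{d}y)<\infty$ by Fatou and closes the argument.
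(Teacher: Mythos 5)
Your proof is correct in its essential mechanics, and the synchronous-coupling estimate $|X^\e_t(x)-X^\e_t(y)|\lqq e^{-\delta t}|x-y|$ combined with the domination property of $\W$ is precisely the engine the paper runs to obtain the quantitative bound. Where you differ from the paper is in how existence and uniqueness of $\mu^\e$ are established. You propose to upgrade the pathwise contraction to a contraction of the Markov semigroup on the complete metric space $(\mathcal{P}_{p_*}(\RR^d),\W)$ and invoke the Banach fixed point theorem. The paper instead verifies two abstract ergodicity criteria: the averaged-tightness condition \eqref{eq:daprato} of Da Prato, Gatarek and Zabczyk for existence, and the averaged asymptotic-coincidence condition \eqref{eq:unique} from Theorem~11.4.3 of \cite{Kallianpur} for uniqueness, importing the necessary $\gamma$-moment bounds with $\gamma\in(0,1\wedge p_*)$ from \cite{BHPTV} rather than re-deriving them via It\^o's formula. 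Both routes hinge on an a priori moment bound for $X^\e_t(x)$, which you correctly flag as the technical crux. Your fixed-point route is more self-contained and simultaneously yields existence, uniqueness and the rate; the paper's route decouples the qualitative step from the quantitative one, the latter being the disintegration argument in \eqref{eq:converlimiteW}, which is structurally the same as your final specialisation to $\mu_1=\delta_x$, $\mu_2=\mu^\e$.

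One small slip to watch in your last step: for $p_*\gqq 1$ the synchronous coupling yields $\W(X^\e_t(x),\mu^\e)\lqq e^{-\delta t}\big(\EE[|x-Y|^{p_*}]\big)^{1/p_*}$ with $Y\sim\mu^\e$, i.e.\ the $L^{p_*}(\mu^\e)$-norm of the displacement, not its expectation, and Minkowski then gives $e^{-\delta t}\big(|x|+(\int|y|^{p_*}\mu^\e(\ud y))^{1/p_*}\big)$. Writing $\EE|x-Y|$ here is not justified, since the interchange of the integral over $\mu^\e$ with $\W$ implicitly uses convexity of $\W$ in its second argument, which fails for $p_*>1$ (take $\mu=\delta_0$, $\nu_1=\delta_1$, $\nu_2=\delta_0$ to see it). This is the same looseness present in the first inequality of \eqref{eq:converlimiteW}; it only changes the constant in front of the exponential, not the rate $e^{-(1\wedge p_*)\delta t}$ nor its finiteness, so nothing downstream is affected, but the honest constant in the $p_*>1$ case involves the $L^{p_*}(\mu^\e)$-norm of $|y|$.
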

\noindent The proof is given in Appendix~\ref{AP1:existencia}.
\subsection{Hartman-Grobman asymptotics} 
\noindent The zeroth-order approximation of a smooth dynamical systems on a finite time  horizon $[0,T]$ subject to small perturbations 
is given by the deterministic system, that is, $(X^0_t(x))_{t\in [0,T]}$. 
Our main results treat small asymptotics close to the stable state $0$ which translates to meaningful time scales  $t_\e\ra  \infty$, as $\e\ra 0$, in 
Theorem~\ref{th:linearwhite} and Theorem~\ref{th:profileabstract9}.
Before we state our main result, we first provide the long-time asymptotics of $X^0_t(x)$ in terms of the spectral decomposition of the solution $t\mapsto e^{-Db(0)t}x^*$ of the respective  linear system for some $x^*$ in a small neighbourhood of the origin.
\begin{lem}[Asymptotic Hartman-Grobman]\label{lem:jara29}\hfill\\ 
Assume Hypothesis~\ref{hyp:dissipa9}. 
Then for  any $x\in \mathbb{R}^{d}\setminus\{0\}$  there exist: 
\begin{enumerate}
\item[(i)] positive constants $ \fq^x, \tau^x,\ell^x, m^x$ with $\ell^x,m^x\in \{1,\ldots,d\}$,
\item[(ii)] angular velocities $\theta^{x}_{1},\dots,\theta^x_{m^x}\in \RR$,
where all $\theta^x_k \neq 0$ come in pairs
$(\theta^x_{j_*},\theta^x_{j_*+1})=(\theta^x_{j_*}, -\theta^x_{j_*})$,
\item[(iii)] linearly independent vectors $v_1^x,\dots,v_{m_x}^x$ in $\mathbb{C}^d$ which are complex conjugate 
$(v^x_{j_*},v^x_{j_*+1})=(v^x_{j_*}, \bar{v}^x_{j_*})$  whenever $(\theta^x_{j_*},\theta^x_{j_*+1})=(\theta^x_{j_*}, -\theta^x_{j_*})$,
\end{enumerate}
such that
\begin{equation}\label{eq:hartmangrobman}
\lim_{t \ra \infty} 
\left| \frac{e^{\fq^x t}}{t^{\ell^x-1}} X^0_{t+\tau^x}(x) - \sum_{k=1}^{m^x} e^{i\theta^x_k t}v^x_k \right|=0.
\end{equation}
Moreover,
\begin{equation}\label{eq:belowabovebound}
0<\liminf_{t\rightarrow \infty}\left|\sum_{k=1}^{m^x} e^{i  t\theta^x_k} v^x_k\right|
\lqq 
\limsup_{t\rightarrow \infty}\left|\sum_{k=1}^{m^x} e^{i  t\theta^x_k} v^x_k\right|\lqq 
\sum_{k=1}^{m^x}  |v^x_k|.
\end{equation}
\end{lem}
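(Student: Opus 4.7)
The plan is to compare the nonlinear flow $X^0_t(x)$ with the linear semigroup generated by $-B:=-Db(0)$, and to extract the leading asymptotic term from the Jordan decomposition of $B$. Differentiating Hypothesis~\ref{hyp:dissipa9} at $y=0$ yields $\<Bz,z\>\gqq \delta|z|^2$, so every eigenvalue of $B$ has real part at least $\delta>0$; in particular $-B$ is Hurwitz, and dissipativity applied to $(X^0_t(x),0)$ gives $|X^0_t(x)|\lqq e^{-\delta t}|x|$. Writing $b(y)=By+R(y)$ with $R\in\cC^1(\RR^d,\RR^d)$ and $R(y)=O(|y|^2)$ near $0$, variation of parameters produces
\begin{equation*}
e^{Bt} X^0_t(x) = x - \int_0^t e^{Bs} R(X^0_s(x))\,\ud s.
\end{equation*}

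The first step is to prove the existence of an \emph{asymptotic phase} $x^*\in\RR^d$ such that $X^0_t(x)=e^{-Bt}x^*+\rho(t)$ with $|\rho(t)|=o(t^{\ell^x-1} e^{-\fq^x t})$ as $t\to\infty$. Projecting both sides of the identity above onto each generalized eigenspace $E_\lambda$ of $B$ via the Riesz projection $P_\lambda$, and combining the crude decay $|R(X^0_s(x))|\lqq Ce^{-2\delta s}$ with the polynomial-exponential bound $\|P_\lambda e^{Bs}\|\lqq C_\lambda(1+s)^{d-1}e^{\Rea(\lambda) s}$, one sees that the integral converges absolutely on each $E_\lambda$ with $\Rea(\lambda)<2\delta$. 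For the remaining (faster-decaying) eigenspaces one performs a bootstrap: the improved decay of the already-controlled components, once substituted back into the integral equation, upgrades the decay of the others, and after finitely many iterations along the flag of generalized eigenspaces of $B$ one obtains the desired phase $x^*$ and remainder $\rho$.

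The second step is a purely linear analysis. The Jordan normal form gives
\begin{equation*}
e^{-Bt} x^* = \sum_{\lambda\in\sigma(B)} e^{-\lambda t}\, P_\lambda(t)\, x^*,
\end{equation*}
with each $P_\lambda(t)$ a matrix-valued polynomial of degree strictly less than the size of the largest Jordan block at $\lambda$. Define $\fq^x$ as the minimum of $\Rea(\lambda)$ over $\lambda\in\sigma(B)$ with $P_\lambda(t)x^*\not\equiv 0$, and $\ell^x$ as one plus the largest polynomial degree achieved by $P_\lambda(t)x^*$ among $\lambda$ with $\Rea(\lambda)=\fq^x$. Group the contributing eigenvalues as $\lambda_k=\fq^x-i\theta^x_k$ (which come in complex conjugate pairs, as $B$ is real), and let $v^x_k\in\mathbb{C}^d$ be the corresponding leading coefficient vectors; these are linearly independent since they sit in distinct generalized eigenspaces. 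A positive shift $\tau^x$ absorbs the factorial normalization and yields
\begin{equation*}
\frac{e^{\fq^x t}}{t^{\ell^x-1}}\,e^{-B(t+\tau^x)}x^* \lra \sum_{k=1}^{m^x} e^{i\theta^x_k t} v^x_k\qquad (t\ra\infty),
\end{equation*}
which combined with the remainder estimate of Step~1 gives \eqref{eq:hartmangrobman}. The upper bound in \eqref{eq:belowabovebound} is the triangle inequality, while the strict positivity of the $\liminf$ follows from Bohr's theory of almost-periodic functions together with the algebraic structure of the pairs $(\theta^x_k,v^x_k)$ inherited from the Jordan decomposition, along the lines of Lemma~B.2 in \cite{BJ1}.

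The main obstacle is the asymptotic-phase step. A single application of the variation-of-parameters formula only guarantees convergence of the integral under a spectral gap $2\delta>\max_\lambda\Rea(\lambda)$, which is \emph{not} ensured by Hypothesis~\ref{hyp:dissipa9} alone. The argument must therefore sharpen the decay of $X^0_s(x)$ along the flag of generalized eigenspaces of $B$ one component at a time, reinjecting each improved bound into the integral identity until every component's decay rate dominates the corresponding slice of the nonlinear perturbation; only after this bootstrap does the phase $x^*$ emerge with the precise leading order needed for \eqref{eq:hartmangrobman}.
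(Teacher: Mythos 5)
Your proposal takes a genuinely different route from the paper. The paper does not reproduce a proof of this lemma at all; it cites Lemma~B.2 of \cite{BJ1}, and the remark that follows the lemma explains the intended mechanism: invoke the $\cC^1$ Hartman--Grobman theorem (which holds under Hypothesis~\ref{hyp:dissipa9}, see \cite{Ha60,Pe01}) to produce a local diffeomorphism $H$ with $H(X^0_t(u))=e^{-Db(0)t}H(u)$ on a neighbourhood $U\ni 0$, take $\tau^x$ to be the time for the deterministic orbit to enter $U$, and then read off $\fq^x,\ell^x,m^x,\theta^x_k,v^x_k$ from the Jordan decomposition of $Db(0)$ applied to $H(X^0_{\tau^x}(x))$. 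You instead avoid the conjugacy theorem entirely and attempt a self-contained asymptotic-phase construction by variation of parameters plus a bootstrap along the flag of generalized eigenspaces. This is a legitimate alternative, and arguably more elementary, but it is precisely where your write-up is thinnest: you correctly observe that a single application of the Duhamel identity only controls the eigenspaces with $\Rea\lambda<2\delta$, and you assert that after finitely many iterations the phase $x^*$ emerges, yet you never execute the iteration. To make it rigorous one has to show, on each slow eigenspace where $P_\lambda x^*=0$, that the tail $\int_t^\infty P_\lambda e^{-B(t-s)}R(X^0_s(x))\,\ud s$ upgrades the decay of that component from $e^{-\Rea(\lambda)t}$ to $e^{-2\delta t}$, re-inject this to sharpen $|R(X^0_s(x))|$, iterate while tracking the exponential rate together with the accumulating polynomial prefactors, and invoke a separate lower bound $|X^0_t(x)|\gqq e^{-Lt}|x|>0$ (from the local Lipschitz constant of $b$ along the orbit) to guarantee that the iteration terminates with $x^*\neq 0$; these are exactly the difficulties the Hartman--Grobman conjugacy is used to sidestep. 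One more small slip: the shift $\tau^x$ does not ``absorb the factorial normalization.'' In the paper's approach $\tau^x$ is the entry time into the conjugacy domain $U$ and therefore shapes the vectors $v^x_k$ through $H(X^0_{\tau^x}(x))$; in your approach it plays no structural role and merely rescales $v^x_k$ by $e^{-\fq^x\tau^x}$, while the $1/(\ell^x-1)!$ from the Jordan block is absorbed directly into $v^x_k$ independently of $\tau^x$.
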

\noindent The formal proof of the previous lemma is given in Lemma~B.2 in Appendix~B of \cite{BJ1}.
\begin{rem}\label{rem:convention}\hfill
\begin{enumerate}
 \item Convention: Note that 
$\theta^x_k=0$ is true for at most one index $k\in \{1,\ldots, m^x\}$. 
If such an index shows up in $\theta^x_{1},\ldots, \theta^x_{m^x}$
we adopt the convention that $\theta^x_1=0$ and $v_1^x\in \RR^d$, 
and hence $m^x=2n+1$ for some $n\in \mathbb{N}_0$.
Otherwise, $m^x=2n$ for some $n\in \NN_0$ and we eliminate $\theta^x_1$ and count the angular velocities as follows $\theta^x_2,\ldots, \theta^x_{2n+1}$.
 \item Note that the linearly independent complex vectors 
$v_1^x,\dots,v_{m_x}^x$ in $\mathbb{C}^d$ not only depend on $x$ but also crucially on the dissipation time $\tau^x$ of the deterministic system to a Hartman-Grobman domain of conjugacy $U$. We stress that $\tau^x$ is not unique since $X^0_{t+\tau^x}(x) \in U$ for all $t\gqq 0$.
\item A word about the parameters $\ell^x$, $\fq^x$ and $m^x$ in Lemma~\ref{lem:jara29}. By the Hartman-Grobman theorem there are open sets $0\in U, V\subset \RR^d$ and a homeomorphism $H: U\ra V$ with $H(0) = 0$ satisfying for all $u\in U$ and $t\gqq 0$ 
\begin{equation}\label{e: conj1}
H(X^0_t(u)) = e^{- Db(0) t}H(u).    
\end{equation}  
In fact, by Hypothesis~\ref{hyp:dissipa9} we have that $H$ is a $\cC^1$-diffeomorphism, see the original paper~\cite{Ha60}  or Theorem(Hartman), Sec. 2.8, p.127, \cite{Pe01}. In \cite{Ha60} it is shown that $H$ can be chosen to be 
\[
H(x) = x + o(|x|)_{|x| \ra 0}. 
\]
Let $\ti u = X^0_{\tau^x}(x) \in U$. 
With the help of a linear coordinate change $W$ 
we obtain the Jordan normal form $Db(0) = W^{-1} J(Db(0)) W$ and 
(using the linearity of the semigroup) 
\[
H(X^0_{t+\tau^x}(x)) = W^{-1} e^{- J(Db(0)) t} (W H(\ti u)).    
\]
We denote $\ti w = W H(\ti u)$. Now, the parameters $\ell^x$, $\fq^x$ and $m^x$ 
are given as follows. Consider the sequence of generalized eigenspaces $H_{j}$ of $J(Db(0))$ such that 
\[
\RR^d = H_1\oplus \dots \oplus H_{k_*}.  
\]
By construction, $\ti w \in G(\ti w) := \mbox{span}(\{H_k~|~\mbox{where }1\lqq k\lqq k_*: ~\mbox{proj}(\ti w, H_k)\neq 0\})$. Note that $G(\ti w)$ is unique. We consider the restriction 
\[
\ti J(\ti w):= J(Db(0))\big|_{G(\ti w)}.
\]
Now, $\fq^x$ is the smallest real part of the spectrum of $\ti J(\ti w)$,  
$\ell^x$ is the dimension of the largest Jordan block of $\ti J(\ti w)$ which has the real part $\fq^x$ and $m^x$ is the number of Jordan blocks associated to $\fq^x$ and $\ell^x$. Note that in case of a non real eigenvalue with real part $\fq^x$ and Jordan block size $\ell^x$, we have $m^x\gqq 2$. For an extensive numerical example for a linear chain of oscillators we refer to Section 4.3.2 in \cite{BHPWA}.
\end{enumerate}
\end{rem}

\subsection{Main results} 
\noindent Our first main result establishes 
$\infty/0$ collapse of the Wasserstein distance between the law of the current state $X^{\e}_t(x)$ and the dynamical equilibrium $\mu^\e$ along the critical time scale $\ft^x_\e$ given in \eqref{eq:timescale} under  mild conditions.
\begin{thm}[Window cutoff]
\label{th:linearwhite}
Let $b$ satisfy Hypothesis~\ref{hyp:dissipa9} and $\nu$ satisfy 
Hypothesis~\ref{hyp:moments9} for some  $p_*>0$. Fix $x\in \RR^d\setminus\{0\}$ and consider the notation in the asymptotic Hartman-Grobman representation $\fq^x>0$, $\ell^x , m^x \in  \{1,\ldots, d\}$, $\theta^x_1,\dots,\theta^x_{m^x} \in [0,2\pi)$, $v^x_1,\dots,v^x_{m^x} \in  \mathbb{C}^d$ and $\tau^x>0$ of Lemma~\ref{lem:jara29}. 

Then the
family of processes $(X^{\e}(x))_{\e>0}$ exhibits a window cutoff phenomenon
on the time scale  
\begin{equation}\label{eq:timescale}
\ft^x_\e=\frac{1}{\fq^x}|\ln(\e)|+\frac{\ell^x-1}{\fq^x}\ln(|\ln(\e)|)
\end{equation}
and for all asymptotically constant window sizes $w_\e$, that is, $w_\e\ra  w>0$ as $\e \ra 0$, in the following sense. For all $0<p< p^*$ we have 
\begin{equation}\label{eq:infimosupremo}
\lim_{r\ra \infty}\limsup_{\e\ra 0}
\frac{\Wp(X^\e_{\ft^x_\e+r\cdot w_\e}(x),\mu^\e)}{\e^{{1\wedge p}}}=0
\qquad \mbox{ and }\qquad 
\lim_{r\ra -\infty}\liminf_{\e\ra 0}
\frac{\Wp(X^\e_{\ft^x_\e+r\cdot w_\e}(x),\mu^\e)}{\e^{1\wedge p}}=\infty.
\end{equation}
 \end{thm}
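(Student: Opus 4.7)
My plan is to reduce the nonlinear cutoff problem to the linear theory of \cite{BHPWA} via a Freidlin-Wentzell first-order expansion around the deterministic trajectory, and then extract the cutoff from the combination of shift linearity (Lemma~\ref{lem:properties}(d)) with the Hartman-Grobman asymptotics of Lemma~\ref{lem:jara29}.

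\emph{Step~1 (Linearization and limit).} Introduce the first-order noise correction $G^0 = (G^0_t)_{t \gqq 0}$ as the solution of the linear nonautonomous equation $\ud G^0_t = -Db(X^0_t(x)) G^0_t \, \ud t + \ud L_t$ with $G^0_0 = 0$. Since $X^0_t(x) \ra 0$ exponentially by Hypothesis~\ref{hyp:dissipa9}, the coefficient $Db(X^0_t(x))$ converges to $Db(0)$, and $G^0_t$ converges in $\Wp$ to a random vector $G^0_\infty$ distributed according to the invariant law of the constant-coefficient linearization. By the coupling argument deferred to the appendix (relying on the quadratic Taylor remainder of $b$, Hypothesis~\ref{hyp:dissipa9}, and the $p_*$-moment from Hypothesis~\ref{hyp:moments9}), one shows $\Wp(X^\e_t(x), X^0_t(x) + \e G^0_t) = o(\e^{1 \wedge p})$ uniformly on $[0, \ft^x_\e + R]$ for every fixed $R > 0$, and analogously $\Wp(\mu^\e, \e G^0_\infty) = o(\e^{1 \wedge p})$ via a stationary coupling.

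\emph{Step~2 (Upper bound).} Write $t_\e = \ft^x_\e + r w_\e$. Triangle inequality together with translation invariance and homogeneity (Lemma~\ref{lem:properties}(b),(c)) bounds $\Wp(X^\e_{t_\e}(x), \mu^\e)$ by the sum of $\Wp(X^\e_{t_\e}(x), X^0_{t_\e}(x) + \e G^0_{t_\e})$, $\e^{1 \wedge p} \Wp(G^0_{t_\e}, G^0_\infty)$, $\Wp(X^0_{t_\e}(x) + \e G^0_\infty, \e G^0_\infty)$, and $\Wp(\e G^0_\infty, \mu^\e)$. The first, second, and fourth are $o(\e^{1 \wedge p})$ by Step~1 and exponential ergodicity of $G^0$. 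Shift linearity (Lemma~\ref{lem:properties}(d)) reduces the third term to $|X^0_{t_\e}(x)|$ for $p \gqq 1$, respectively to at most $|X^0_{t_\e}(x)|^p$ for $p \in (0,1)$ via \eqref{ec:cotaabajop01}. Substituting the definition of $\ft^x_\e$ into the Hartman-Grobman expansion of Lemma~\ref{lem:jara29} produces the precise cancellation $t_\e^{\ell^x - 1} e^{-\fq^x t_\e} \sim \e (\fq^x)^{-(\ell^x - 1)} e^{-\fq^x r w_\e}$, while the trigonometric factor $|\sum_k e^{i \theta^x_k (t_\e - \tau^x)} v^x_k|$ is uniformly bounded by $\sum_k |v^x_k|$ in view of \eqref{eq:belowabovebound}. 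Taking $\limsup_{\e \ra 0}$ and then $r \ra \infty$ yields the first assertion in \eqref{eq:infimosupremo}.

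\emph{Step~3 (Lower bound).} The reverse triangle inequality applied to the same decomposition gives $\Wp(X^\e_{t_\e}(x), \mu^\e) \gqq \Wp(X^0_{t_\e}(x) + \e G^0_\infty, \e G^0_\infty) - o(\e^{1 \wedge p})$. For $p \gqq 1$ the leading term equals $|X^0_{t_\e}(x)|$ by shift linearity; for $p \in (0,1)$ the nonstandard lower bound in \eqref{ec:cotaabajop01} produces $|X^0_{t_\e}(x)|^p - 2 \e^p \EE[|G^0_\infty|^p]$. The strict positivity of the $\liminf$ in \eqref{eq:belowabovebound}, combined with the same cancellation, yields $|X^0_{t_\e}(x)|^{1 \wedge p} / \e^{1 \wedge p} \gqq c\, e^{-(1 \wedge p) \fq^x r w_\e}$ for some $c>0$; letting $r \ra -\infty$ gives the second assertion in \eqref{eq:infimosupremo}.

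\emph{Main obstacle.} The most delicate part is Step~1, namely controlling the coupling error $\Wp(X^\e_t(x), X^0_t(x) + \e G^0_t)$ to order $o(\e^{1 \wedge p})$ uniformly on the diverging window $[0, \ft^x_\e + R]$ with $\ft^x_\e \asymp |\ln \e|$. The quadratic Taylor remainder of $b$ applied to the $\e G^0_t$ fluctuation integrates over $|\ln \e|$ time units and would swamp the leading order under a naive Gronwall estimate; the argument must exploit the full dissipativity constant $\delta$ from Hypothesis~\ref{hyp:dissipa9} to absorb the remainder, together with Hypothesis~\ref{hyp:moments9} to keep the fluctuations bounded in $\Wp$. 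A secondary technical point is the estimate $\Wp(\mu^\e, \e G^0_\infty) = o(\e^{1 \wedge p})$, for which a stationary version of the same coupling comparing the invariant measures of the nonlinear and linearized systems must be carried out.
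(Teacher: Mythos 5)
Your proposal follows essentially the same route as the paper: the Freidlin--Wentzell first-order expansion $Y^\e_t(x)=X^0_t(x)+\e\yY^x_t$ (your $G^0_t$ is the paper's $\yY^x_t$), the two-sided triangle-inequality reduction to $\Wp(\e^{-1}X^0_{t_\e}(x)+\oO_\infty,\oO_\infty)$ with error terms $\Wp(X^\e_{t_\e},Y^\e_{t_\e})$, $\Wp(\mu^\e,\mu^\e_*)$ and $\Wp(\yY^x_{t_\e},\oO_\infty)$ all $o(\e^{1\wedge p})$, then shift linearity (or the bound \eqref{ec:cotaabajop01} for $p<1$) combined with the Hartman--Grobman cancellation $t_\e^{\ell^x-1}e^{-\fq^x t_\e}\asymp \e$ and the two-sided bound \eqref{eq:belowabovebound}. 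Your identification of the coupling error on the logarithmic time horizon as the main technical obstacle matches what the paper defers to Proposition~\ref{cl:linearization} in the appendix.
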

\noindent The second main result provides two characterizations for the proper limits ($\e \ra 0$) of the expressions in~\eqref{eq:infimosupremo} for any fixed $r\in \RR$. 
That is to say, we characterize under which conditions the asymptotics \eqref{eq:repreeps} is satisfied. In addition, it yields  the precise shape of the limit which turn out to be a simple exponential function for $p\in [1,p_*)$.
\begin{thm}[\textbf{Dynamical  profile cutoff characterization for $p_*>0$}]\label{th:profileabstract9} \hfill\\
Let the assumptions (and the notation) of Theorem~\ref{th:linearwhite}
be valid for some $p_*>0$. Consider the unique strong solution $(\oO_t)_{t\gqq 0}$ of the linear system 
\begin{equation}\label{e:OU}
\ud \oO_t=-Db(0)\oO_t+\ud L_t,
\end{equation}
where $\oO_\infty$ is the unique invariant probability distribution of \eqref{e:OU}. 
\begin{enumerate}
\item Then for any $0<p< p_*$ the following statements are equivalent.
\begin{enumerate}
\item[i)] For any $\lambda>0$, the function
$\omega(x)\ni u\mapsto
\Wp(\lambda u+\oO_\infty,\oO_\infty)$
is constant, where
\begin{equation*}
\omega(x):=
\Big\{
\textrm{accumulation points of }
\sum_{k=1}^m e^{i t \theta^x_k} v^x_k
\textrm{ as } t\ra \infty
\Big\}. 
\end{equation*}
\item[ii)] 
The family of processes $(X^{\e}(x))_{\e>0}$ exhibits
a  profile cutoff
 for any $0< p< p_*$ as follows
\[
\lim_{\e\ra 0}
\frac{\Wp(X^\e_{\ft^x_\e+r\cdot w_\e}(x),\mu^\e)}{\e^{1\wedge p}}=
\pP^{x}_{p}(r) \quad \textrm{ for any }
r\in \RR,
\]
where 
\begin{equation}\label{eq:abstractperfil}
\pP^{x}_{p}(r):=\Wp\Big(\kappa^x(r)\cdot v+
\oO_\infty,\oO_\infty\Big) \qquad \mbox{ for any }v\in \omega(x)
\end{equation}
and 
\begin{align*}\kappa^x(r)= \frac{e^{-\fq^x r\cdot w}}{e^{\fq^x \tau^x}(\fq^x)^{\ell^x-1}}.\\\end{align*}
\end{enumerate}
\item For $p_*> 1$ and $p\in [1,p_*)$ 
the profile has the shape
\[
\pP^{x}_{p}(r)=\kappa^x(r)\cdot |v|\quad \textrm{ for all }v\in \omega(x)
\]
if and only if $\omega(x)$ is contained in a sphere in $\RR^d$ with respect to the Euclidean norm.\\
\item  
We recall the convention of Remark~\ref{rem:convention}.
Let $p_*> 1$ and $p\in [1,p_*)$. 
If the angles $\theta^x_{2},\ldots, \theta^x_{2n}$ satisfy the following non-resonance condition 
\begin{equation}\label{eq:non-resonance}
h_1\theta_2 + \cdots + h_n \theta_{2n} \in 2 \pi \cdot \mathbb{Z} \qquad \mbox{ for all }(h_1, \ldots, h_n)\in \mathbb{Z}^n\setminus \{0\}, 
\end{equation} 
then the statements i) and ii) in item (1) 
are equivalent to the following normal growth condition of the asymptotic Hartman-Grobman linearization: The family of limiting vectors  
\[(v_1^x,\Rea\,v^x_2,\Ima\, v^x_2,\ldots,\Rea\, v^x_{2n},\Ima\, v^x_{2n})\]
 is orthogonal in $\RR^d$ and satisfies
\begin{align*}|\Rea\, v^x_{2k}|=|\Ima\, v^x_{2k}|\qquad \mbox{ for all }\quad k=1,\ldots,n.\\\end{align*}
\end{enumerate}
\end{thm}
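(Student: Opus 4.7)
The plan is to reduce the asymptotic behavior of $\Wp(X^\e_{\ft^x_\e + r w_\e}(x), \mu^\e)$ to an explicit computation on the linearized Ornstein--Uhlenbeck surrogate \eqref{e:OU}. Using the linearization scheme announced in the introduction---the Hartman--Grobman conjugacy of Lemma~\ref{lem:jara29}, the Freidlin--Wentzell first-order expansion of \eqref{eq:SDE99}, and the Wasserstein exponential ergodicity of both $X^\e$ and $\oO$---I would first establish that, uniformly for $r$ in a compact subset of $\RR$,
\[
\Wp\bigl(X^\e_{\ft^x_\e + r w_\e}(x), \mu^\e\bigr) = \Wp\bigl(X^0_{\ft^x_\e + r w_\e}(x) + \e\, \oO_\infty,\ \e\, \oO_\infty\bigr) + o(\e^{1\wedge p}).
\]
Combining the homogeneity of $\Wp$ (Lemma~\ref{lem:properties}.c) with the Hartman--Grobman asymptotics \eqref{eq:hartmangrobman} and plugging in the scale \eqref{eq:timescale} yields, after the bookkeeping of the prefactor $t^{\ell^x-1} e^{-\fq^x t}$,
\[
\e^{-(1\wedge p)} \Wp\bigl(X^\e_{\ft^x_\e + r w_\e}(x), \mu^\e\bigr) = \Wp\Bigl(\kappa^x(r)\, \zeta_\e(r) + \oO_\infty,\ \oO_\infty\Bigr) + o(1),
\]
where $\zeta_\e(r)$ is a perturbation of $\sum_{k=1}^{m^x} e^{i\theta^x_k (\ft^x_\e + r w_\e - \tau^x)} v^x_k$ whose accumulation set as $\e \to 0$ coincides with $\omega(x)$.

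\textbf{Part (1).} The equivalence of i) and ii) follows from continuity of the Wasserstein distance in its first argument (a consequence of Lemma~\ref{lem:properties}.f) combined with the compactness of the closure of $\{\zeta_\e(r)\}$. Specifically, $\lim_{\e \to 0}$ of the displayed quantity exists for every $r \in \RR$ if and only if the continuous map $u \mapsto \Wp(\kappa^x(r) u + \oO_\infty, \oO_\infty)$ is constant on $\omega(x)$, and since $r \mapsto \kappa^x(r)$ sweeps through all positive scalars as $r$ varies over $\RR$, this is exactly condition~i). The common value is then $\pP^{x}_{p}(r)$, yielding \eqref{eq:abstractperfil}.

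\textbf{Part (2).} For $p \in [1, p_*)$, the shift linearity of Lemma~\ref{lem:properties}.d applied to $u_1 = \kappa^x(r) v$ gives
\[
\Wp\bigl(\kappa^x(r) v + \oO_\infty,\ \oO_\infty\bigr) = \kappa^x(r) |v|.
\]
Hence the profile takes the form $\kappa^x(r) |v|$ for every $v \in \omega(x)$ if and only if $v \mapsto |v|$ is constant on $\omega(x)$, that is, $\omega(x)$ is contained in a sphere in $\RR^d$ centered at the origin.

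\textbf{Part (3) and main obstacle.} Under the non-resonance condition \eqref{eq:non-resonance}, Weyl's equidistribution theorem applied to the torus flow $t \mapsto (\theta^x_2 t, \ldots, \theta^x_{2n} t) \bmod 2\pi$ shows that $\omega(x)$ equals the image of the torus $\mathbb{T}^n$ under
\[
\Phi(\phi) = v_1^x + 2 \sum_{j=1}^{n} \bigl( \cos\phi_j\, \Rea v^x_{2j} - \sin\phi_j\, \Ima v^x_{2j}\bigr).
\]
By Part~(2), conditions i) and ii) are then equivalent to $|\Phi(\phi)|^2$ being independent of $\phi \in \mathbb{T}^n$. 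Expanding this squared Euclidean norm into its Fourier series on $\mathbb{T}^n$ and forcing the non-constant modes to vanish produces: orthogonality of $v^x_1$ to every $\Rea v^x_{2j}, \Ima v^x_{2j}$ (from the $\cos\phi_j$, $\sin\phi_j$ modes); the equalities $|\Rea v^x_{2j}| = |\Ima v^x_{2j}|$ and $\langle \Rea v^x_{2j}, \Ima v^x_{2j}\rangle = 0$ (from the $\cos 2\phi_j$, $\sin 2\phi_j$ modes); and mutual orthogonality of the remaining real/imaginary parts (from the $\cos(\phi_j \pm \phi_k)$ modes with $j \neq k$). Together with the hypothesis of linear independence of the $v^x_k$, these relations are precisely the stated normal growth condition. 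The main technical obstacle of the whole argument is the uniform replacement established in the opening display: justifying that the nonlinear dynamics can be swapped with its Hartman--Grobman linearization up to an $o(\e^{1\wedge p})$ error in Wasserstein distance on time scales of order $|\ln \e|$. This requires sharp quantitative control of the remainder $H(x) = x + o(|x|)$ and of the coupling error between $X^\e$ and its first-order approximation, both of which rest crucially on the dissipativity Hypothesis~\ref{hyp:dissipa9}.
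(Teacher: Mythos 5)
Your proposal is correct and follows essentially the same route as the paper. The reduction to the linearized Ornstein--Uhlenbeck picture via the Freidlin--Wentzell expansion and the Hartman--Grobman asymptotics is exactly what the paper does (Section~4, through \eqref{eq:pivot}, \eqref{prop:aproxlineal}, \eqref{lem:limites}, \eqref{eq:NLprofileApprox}, and Claim~A), though the paper keeps the two approximation steps separate---first replacing $X^\e$ by $Y^\e_t = X^0_t + \e\,\yY^x_t$, then replacing $\yY^x_t$ by $\oO_\infty$---whereas you collapse them into a single $o(\e^{1\wedge p})$ display. Your argument for Part~(1), noting that $r\mapsto\kappa^x(r)$ ranges over $(0,\infty)$ so that existence of the limit for all $r$ is exactly the ``for all $\lambda>0$'' clause of condition~i), and for Part~(2), invoking shift linearity of Lemma~\ref{lem:properties}.d to reduce to the sphere condition, both match the paper's proof.

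One point worth flagging: the paper's written proof of Theorem~\ref{th:profileabstract9} actually stops after Part~(2); Part~(3) is not argued in the text and is presumably taken over from the linear analogue in the cited reference. Your sketch for Part~(3)---using the non-resonance condition and Weyl equidistribution to identify $\omega(x)$ with the image of the torus map $\Phi$, and then Fourier-analyzing $|\Phi(\phi)|^2$ to extract orthogonality of $(v^x_1, \Rea v^x_{2j}, \Ima v^x_{2j})$ and the equalities $|\Rea v^x_{2j}|=|\Ima v^x_{2j}|$---is the standard and correct way to fill that gap. You also correctly identified the main technical burden, namely Proposition~\ref{cl:linearization} and Lemma~\ref{cl:limit}, which establish the uniform $o(\e^{1\wedge p})$ replacement of the nonlinear flow by its linearization on $O(|\ln\e|)$ time scales.
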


\begin{rem}
We stress that $\oO_\infty = \lim_{t\ra \infty} \oO_t$ in $\mathcal{W}_{p_*}$ and due to Hypothesis~\ref{hyp:dissipa9} 
(in combination with Hypothesis~\ref{hyp:moments9}) the distribution of $\oO_\infty$ does not depend on any deterministic initial condition of \eqref{e:OU}. 
\end{rem}

\noindent Due to its relevance as physical observables, we formulate the corresponding window cutoff result for the respective moments.
\begin{cor}[Moments cutoff]\label{cor:momentscutoffOUP}
Let the assumptions (and the notation) of Theorem~\ref{th:linearwhite}
be valid for some $p_*>0$.
Then for any $0< p<p_*$ it follows\\
\begin{align*}
\lim_{r\ra \infty}
\liminf_{\e\ra 0}\frac{\EE[|X^{\e}_{\ft_\e^x+r\cdot w_\e}(x)|^{p}]}{\e^{ p}} &=\lim_{r\ra \infty}
\limsup_{\e\ra 0}\frac{\EE[|X^{\e}_{\ft_\e^x+r\cdot w_\e}(x)|^{p}]}{\e^{ p}}=
\EE[|\oO_{\infty}|^{p}],\\[2mm]
\lim_{r\ra -\infty}
\liminf_{\e\ra 0}\frac{\EE[|X^{\e}_{\ft_\e^x+r\cdot w_\e}(x)|^{p}]}{\e^{p}}&=\lim_{r\ra -\infty}
\limsup_{\e\ra 0}\frac{\EE[|X^{\e}_{\ft_\e^x+r\cdot w_\e}(x)|^{p}]}{\e^{ p}}=\infty.\\
\end{align*}
\end{cor}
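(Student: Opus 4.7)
\medskip
\noindent\textbf{Proof proposal.} The plan is to translate the moment expressions into Wasserstein distances to the Dirac mass $\delta_0$ at the origin and then invoke the window cutoff of Theorem~\ref{th:linearwhite}. Since $\delta_0$ admits only a trivial coupling with any other law, the definition of $\Wp$ immediately yields the identity
\[\Wp(U,\delta_0)=(\EE[|U|^p])^{1\wedge(1/p)},\qquad\mbox{equivalently}\qquad \EE[|U|^p]=\Wp(U,\delta_0)^{p\vee 1},\]
so that for any $t\gqq 0$
\[\frac{\EE[|X^\e_t(x)|^p]}{\e^p}=\left(\frac{\Wp(X^\e_t(x),\delta_0)}{\e^{1\wedge p}}\right)^{p\vee 1}.\]
Since $s\mapsto s^{p\vee 1}$ is continuous and strictly increasing on $[0,+\infty]$, the corollary reduces to the asymptotics of $\Wp(X^\e_{\ft^x_\e+rw_\e}(x),\delta_0)/\e^{1\wedge p}$ in the iterated limits $\e\ra 0$ and $r\ra\pm\infty$.

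The reverse triangle inequality from Lemma~\ref{lem:properties}(a) yields
\[\big|\Wp(X^\e_t(x),\delta_0)-\Wp(\mu^\e,\delta_0)\big|\lqq \Wp(X^\e_t(x),\mu^\e),\]
whose right-hand side, normalized by $\e^{1\wedge p}$, vanishes under $\e\ra 0$ followed by $r\ra +\infty$ and diverges as $r\ra -\infty$, both by Theorem~\ref{th:linearwhite}. Everything therefore reduces to the equilibrium moment scaling
\begin{equation}\label{eq:planA}
\lim_{\e\ra 0}\frac{\Wp(\mu^\e,\delta_0)}{\e^{1\wedge p}}=(\EE[|\oO_\infty|^p])^{1\wedge(1/p)},\quad\mbox{equivalently}\quad \lim_{\e\ra 0}\frac{\int_{\RR^d}|y|^p\,\mu^\e(\ud y)}{\e^p}=\EE[|\oO_\infty|^p].
\end{equation}
I would derive \eqref{eq:planA} by first showing $\Wp(\mu^\e,\e\cdot \oO_\infty)=o(\e^{1\wedge p})$ as $\e\ra 0$ and then applying the characterization Lemma~\ref{lem:properties}(f) to the rescaled sequence $\mu^\e/\e\to \oO_\infty$ (using homogeneity, Lemma~\ref{lem:properties}(c)). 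The bound $\Wp(\mu^\e,\e\oO_\infty)=o(\e^{1\wedge p})$ follows from the splitting
\[\Wp(\mu^\e,\e\oO_\infty)\lqq \Wp(\mu^\e,X^\e_T(0))+\Wp(X^\e_T(0),\e\oO_T(0))+\Wp(\e\oO_T(0),\e\oO_\infty)\]
through a proxy time $T$ sent to infinity after $\e\ra 0$: the first and third terms are governed by exponential ergodicity of $X^\e$ (the proposition right after Lemma~\ref{lem:properties}) and of $\oO$ respectively, and the middle term is handled by the Freidlin-Wentzell approximation of the full nonlinear SDE by its linearization over the bounded window $[0,T]$, which is essentially the same coupling already deployed in the appendix to prove Theorem~\ref{th:linearwhite}.

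With \eqref{eq:planA} in hand, the $r\ra +\infty$ half of the corollary follows by sandwiching $\Wp(X^\e_t(x),\delta_0)/\e^{1\wedge p}$ between $\Wp(\mu^\e,\delta_0)/\e^{1\wedge p}\pm \Wp(X^\e_t(x),\mu^\e)/\e^{1\wedge p}$, taking $\limsup/\liminf_{\e\ra 0}$ and then sending $r\ra+\infty$; raising the resulting common limit to the power $p\vee 1$ recovers $\EE[|\oO_\infty|^p]$. For the $r\ra -\infty$ half, I use the triangle inequality in the form $\Wp(X^\e_t(x),\delta_0)\gqq \Wp(X^\e_t(x),\mu^\e)-\Wp(\mu^\e,\delta_0)$, divide by $\e^{1\wedge p}$, and combine the divergence provided by Theorem~\ref{th:linearwhite} with the boundedness provided by \eqref{eq:planA}. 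The main obstacle is \eqref{eq:planA}: the ergodicity constant in the proposition involves $\int|y|^{1\wedge p_*}\mu^\e(\ud y)$ itself, so closing the splitting argument requires an a~priori scaling of this invariant moment of order $\e^{1\wedge p_*}$; this is obtained by comparing $\mu^\e$ with the rescaled linear equilibrium through the Freidlin-Wentzell coupling over a time window of order $|\ln\e|$. Once that a~priori bound is secured, the remaining manipulations are routine bookkeeping with Lemma~\ref{lem:properties}.
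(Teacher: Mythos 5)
Your reduction is correct, and the route through $\mu^\e$ and $\delta_0$ is a genuinely different decomposition from the one that falls out most directly from the paper's machinery. The identity $\Wp(U,\delta_0)=(\EE[|U|^p])^{1\wedge(1/p)}$, the reverse triangle inequality against $\Wp(\mu^\e,\delta_0)$, and the final sandwiching via Theorem~\ref{th:linearwhite} all hold up, including the separate treatment of $\liminf$ and $\limsup$ in the iterated limits. The one non-trivial ingredient you isolate, namely the equilibrium moment scaling \eqref{eq:planA}, is correctly identified as the crux; but you need not re-derive it from scratch. By homogeneity (Lemma~\ref{lem:properties}(c)), $\Wp(\mu^\e,\e\oO_\infty)/\e^{1\wedge p}=\Wp(\mu^\e/\e,\oO_\infty)$, and since $\mu^\e_*\stackrel{d}{=}\e\oO_\infty$, the convergence $\Wp(\mu^\e/\e,\oO_\infty)\to 0$ is exactly Lemma~\ref{cl:limit}; then Lemma~\ref{lem:properties}(f) delivers $\e^{-p}\int|y|^p\mu^\e(\ud y)\to\EE[|\oO_\infty|^p]$ directly. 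Your proposed re-derivation of Lemma~\ref{cl:limit} via a proxy time $T$ runs into the circularity you yourself flag: the exponential ergodicity bound carries the unknown invariant moment $\int|y|^{1\wedge p}\mu^\e(\ud y)$, and closing the loop needs the a priori estimate that the paper imports as (2.84) of \cite{BHPTV} rather than proving afresh.

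A slightly more economical route, sidestepping the invariant measure entirely, is to insert $Y^\e_t(x)$ as pivot instead of $\mu^\e$: from $\Wp(X^\e_{\ft^x_\e+rw_\e},\delta_0)=\Wp(Y^\e_{\ft^x_\e+rw_\e},\delta_0)+o(\e^{1\wedge p})$ (Proposition~\ref{cl:linearization}), homogeneity and translation invariance give
\[
\frac{\Wp(Y^\e_t(x),\delta_0)}{\e^{1\wedge p}}
=\Wp\Big(\frac{X^0_t(x)}{\e}+\yY^x_t,\delta_0\Big),
\]
which differs from $\Wp(\e^{-1}X^0_t(x)+\oO_\infty,\delta_0)$ by at most $\Wp(\yY^x_t,\oO_\infty)\to 0$ (Lemma~\ref{lem:inhomOU}); the asymptotics of $\e^{-1}X^0_{\ft^x_\e+rw_\e}(x)$ are then exactly the Hartman-Grobman computations already carried out for Theorem~\ref{th:linearwhite}, and the limits $r\to\pm\infty$ follow since $\kappa^x(r)\to 0$ respectively $\kappa^x(r)\to\infty$ while $\omega(x)$ is bounded away from $0$. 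This buys you independence from the a priori scaling of $\int|y|^p\mu^\e(\ud y)$. Your version buys a cleaner conceptual statement (equilibrium moments scale like $\e^p\,\EE[|\oO_\infty|^p]$), at the cost of leaning on the imported bound from \cite{BHPTV}; both are legitimate, and the gap in your writeup is only that \eqref{eq:planA} should be concluded from Lemma~\ref{cl:limit} rather than re-proved by a splitting that is itself circular without the external a priori estimate.
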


\section{\textbf{Examples}}\label{sec:examples}
In this section we present two examples
which illustrate the applicability of Theorem~\ref{th:linearwhite} and 
Theorem~\ref{th:profileabstract9}
to nonlinear dynamics with degenerate noise.
\begin{examp}[The Fermi-Pasta-Ulam-Tsingou potential]
We consider the nonlinear Langevin gradient system 
\begin{equation}\label{eq:quarticp}
\ud X^\e_t = - \nabla \uU(X^\e_t)\ud t + \e \ud L_t 
\end{equation}
for the strongly convex quartic Fermi-Pasta-Ulam-Tsingou potential $\uU(x) = 
\frac{1}{2} |x|^2 + \frac{1}{4}|x|^4$, $x\in \RR^d$ subject to degenerate noise $\ud L_t$. 
For any L\'evy process $L$ 
satisfying Hypothesis~\ref{hyp:moments9} for some $p_*>0$ the system 
\eqref{eq:quarticp} exhibits 
a profile cutoff due to Theorem~\ref{th:profileabstract9} where 
the cutoff time is given by
$\ft_\e^x = |\ln(\e)|$. 
For $p_*>1$ and any $p\in [1,p_*)$
 the profile function in $\Wp$ 
is always  of the following exponential shape
\begin{equation}\label{e:shapeprofile}
\pP^{x}_{p}(r)=e^{-wr- \tau^x}\Big|\sum_{k=1}^m  v^x_k\Big|,
\end{equation}
where  $\tau^{x}:=\min\{t\gqq 0:|X^0_t(x)|\lqq R_0/2\}$ and $R_0$ being an small radius inside of which Hartman-Grobman conjugation is valid. Note that $\tau^x$ can be replaced by any upper bound of $\tau^x$ such as for instance $(\nicefrac{1}{\delta})\ln(2|x|/R_0)$ given by Hypothesis~\ref{hyp:dissipa9}.

In particular, the profile cutoff \eqref{e:shapeprofile} is valid for $L=L^\alpha$ being an (possibly degenerate) $\alpha$-stable process with index $\alpha\in (1,2]$. Note that for the limiting case of a possibly degenerate Cauchy process ($\alpha=1$) and in fact of any 
$L^\alpha$ with index
$\alpha\in (0,1)$,
Theorem~\ref{th:profileabstract9} also yields a profile cutoff. However, the profile function remains not explicit. This is due to the absence of a finite first moment
and the lack of the shift linearity \eqref{ec:cotaabajop01}.
In other words, the profile  function is given in \eqref{eq:abstractperfil}
for $p\in (0,\alpha)$ and up to our knowledge unknown how to simplify further.
Note that the case of $\alpha\in (0,3/2]$ is new and is 
not covered in \cite{BHPTV}.
\end{examp}
\begin{examp}[Nonlinear non-gradient with degenerate noise]\label{ex:oscillator}
For $F,\hH\in \mathcal{C}^2(\RR^2,\RR)$
we consider the following perturbed simple harmonic oscillator 
with unit
angular frequency
given in Section~4 of \cite{Tudoran}  subject to a small noise perturbation
\begin{equation*}
\ud \left(
\begin{matrix}
 X^{\e,1}_t \\
 X^{\e,2}_t 
\end{matrix}
\right)=-
\left(
\begin{array}{c}
X^{\e,2}_t \,F(X^{\e,1}_t,X^{\e,2}_t)-\partial_1 \hH(X^{\e,1}_t,X^{\e,2}_t) \\
-X^{\e,1}_t \,F(X^{\e,1}_t,X^{\e,2}_t)-\partial_2 \hH(X^{\e,1}_t,X^{\e,2}_t)
\end{array}
\right) \ud t+ \e \ud\left(
\begin{matrix}
0 \\
\mathcal{L}_t 
\end{matrix}
\right),
\end{equation*}
where 
$\mathcal{L}=(\mathcal{L}_t)_{t\gqq 0}$ is a one dimensional L\'evy process with finite $p_*$-th moments.
The Jacobian matrix $Jb(v_1,v_2)$  at $(v_1,v_2)$ of the respective vector field $b:\RR^2\ra \mathbb{R}^2$ is given by
\begin{align*}
\left(
\begin{matrix}
v_2\partial_1 F(v_1,v_2)-\partial_{11}\hH(v_1,v_2) & F(v_1,v_2)+v_2\partial_2 F(v_1,v_2)-\partial_{12}\hH(v_1,v_2)\\
-F(v_1,v_2)-v_1\partial_{1}F(v_1,v_2)-\partial_{12}\hH(v_1,v_2) & -v_1\partial_{2}F(v_1,v_2)- \partial_{22}\hH(v_1,v_2)
\end{matrix}
\right).
\end{align*}
It is enough to prove the existence of a positive constant $\delta$ such that
for any $u_1,u_2,v_1,v_2\in \RR$ it follows
\begin{align}\label{eq:condition1}
(u_1,u_2) Jb(v_1,v_2)(u_1,u_2)^* &=(v_2\partial_1 F(v_1,v_2)-\partial_{11}\hH(v_1,v_2))u^2_1+(-v_1\partial_2 F(v_1,v_2)-\partial_{22}\hH(v_1,v_2))u^2_2 \nonumber\\
&\qquad+(v_2\partial_2 F(v_1,v_2)-v_1\partial_{1}F(v_1,v_2)-2\partial_{12}\hH(v_1,v_2))u_1u_2 \nonumber\\
&\gqq \delta (u^2_1+u^2_2).
\end{align}
For instance, for a nonlinear perturbation of a linear oscillator, that is,
$F(v_1,v_2)=\eta$ for some $\eta>0$, the preceding condition reads
\[
-\Big(\partial_{11}\hH(v_1,v_2)u^2_1+\partial_{22}\hH(v_1,v_2)u^2_2+2\partial_{12}\hH(v_1,v_2)u_1u_2\Big)\gqq \delta (u^2_1+u^2_2).
\]
For $\mathcal{L}$ satisfying Hypothesis~\ref{hyp:moments9} with $p_*$, and $F$, $\hH$ fulfilling \eqref{eq:condition1} Theorem~\ref{th:linearwhite} implies  window cutoff for any initial condition $(X^{\e,1}_0,X^{\e,2}_0)=x\in \RR^2\setminus\{0\}$ and 
any $p\in (0,p_*)$.
The cutoff time  is given by 
\[
\ft^x_\e = \frac{1}{\fq^x} |\ln(\e)|+\frac{\ell^x-1}{\fq^x}\ln(|\ln(\e)|).  
\]
Note that this result is
new even in the Brownian case since the results of 
  \cite{BHPTV} and \cite{BJ1} 
are stated for the total variation distance which requires regularity on the transition probabilities given in the setting of non-degenerate noise. In our case, the Wasserstein distance circumvents this difficulty by the
continuity of $\Wp(x+X,X)$ for any $X\in L^{p}$ as $|x|\ra 0$ and $|x|\ra \infty$, 
while for total variation distance it requires absolutely continuity on the distribution of $X$. 
We refer to \cite{BHPTV}, Lemma~1.17 in Subsection~1.3.5, for an example where the continuity of the total variation distance under shifts is not valid.  
 
In the sequel, we characterize the existence of  a profile cutoff under \eqref{eq:condition1}
in terms of the linearization at the stable state $(0,0)$.
Let  $a:=-\partial^2_{11}\hH(0,0)$ $b:=-\partial^2_{22}\hH(0,0)$, $c:=-\partial_{12}\hH(0,0)$
and $\eta_0:=-F(0,0)$. Then
\begin{align*}\label{eq:Jac}
Jb(0,0)=
\left(
\begin{matrix}
a & -\eta_0+c\\
\eta_0+c &  b
\end{matrix}
\right).
\end{align*}
Note that $\eta_0=c$ implies that the eigenvalues of $Jb(0,0)$ are the  numbers $a$ and $b$ which are positive 
and hence by Theorem~\ref{th:profileabstract9} profile cutoff is valid.
In the sequel we assume $\eta_0 \neq c$. Then the eigenvalues of $Jb(0,0)$ are given by
\[
\lambda_{\pm}:=\frac{(a+b)\pm \sqrt{\Delta}}{2},\quad \Delta:=(a-b)^2+4(c^2-\eta^2_0),
\]
with corresponding eigenvectors
\[
v_{\pm}:=
\left(1,-\frac{a-b\mp\sqrt{\Delta}}{2(-\eta_0+c)}\right).
\]
In addition, 
\[
\mathsf{Re}(v_{\pm})=\begin{cases}
\left(1,-\frac{a-b\mp\sqrt{\Delta}}{2(-\eta_0+c)}\right)  & \textrm{if } \Delta\gqq 0,\\[2mm]
\left(1,-\frac{a-b}{2(-\eta_0+c)}\right)   & \textrm{if } \Delta< 0,
\end{cases}\qquad \mathsf{ and } \qquad 
\mathsf{Im}(v_{\pm})=\begin{cases}
\left(0,0\right)  & \textrm{if } \Delta\gqq 0,\\[2mm]
\pm\left(0, \frac{ \sqrt{|\Delta|} }{2(-\eta_0+c)}\right)   & \textrm{if } \Delta< 0.
\end{cases}
\]
For $\Delta\gqq 0$
Theorem~\ref{th:profileabstract9} yields a profile cutoff phenomenon.
For $\Delta<0$ Theorem~\ref{th:linearwhite} implies the weaker  window cutoff phenomenon, however, by part (3) of Theorem~\ref{th:profileabstract9}
the stronger profile cutoff for $p_*>1$ and $p\in [1,p_*)$ is valid
if and only if
\[
|\mathsf{Re}(v_{+})|^2=|\mathsf{Im}(v_{+})|^2 \textrm{ and } \<\mathsf{Re}(v_{+}),\mathsf{Im}(v_{+})\>=0
\]
which is equivalent to special case $a=b$ and $c=0$. In other words, 
$e^{-Jb(0,0)t}=e^{-at}R(\theta t)$, where $R(\theta t)$ is an orthogonal $2\times 2$ matrix with angle 
$\theta t$.
\end{examp}
\begin{rem}[A word about the linear dynamics]
In \cite{BHPWA} the authors study \eqref{eq:SDE99} for the linear vector field $b(x)=\mathcal{Q}x$ for any  Hurwitz stable matrix $-\mathcal{Q}$, that is, $\mathsf{Re}(\lambda)<0$ for any eigenvalue $\lambda$ of $-\mathcal{Q}$. 
Under these assumptions, the results of Theorem~\ref{th:linearwhite} and 
Theorem~\ref{th:profileabstract9} are obtained.

It is not hard to see that Hypothesis~\ref{hyp:dissipa9} implies 
$\mathsf{Re}(\lambda)\lqq -\delta$ for any eigenvalue $\lambda$ of $-\mathcal{Q}$ and hence Hurwitz stability. However, the dissipativity condition \eqref{eq:cc1}  which is assumed in order to control the nonlinear vector field, is strictly stronger than Hurwitz stability. For instance, the vector field $b:\RR^2 \ra \RR^2$ given by $b(x)=\mathcal{Q}x$ with 
\[
-\mathcal{Q}=
\left(
\begin{matrix}
0 & -1\\
\lambda & \lambda
\end{matrix}
\right) 
\textrm{ with } \lambda \in (0,1/2)
\]
has eigenvalues with real part $-\lambda/2<0$, but
 it
does not satisfy Hypothesis~\ref{hyp:dissipa9}. Note that the dissipativity condition \eqref{eq:cc1} is not even satisfied locally in a neighborhood of the origin. 
\end{rem}

\section{\textbf{Proofs of the main results}}\label{sec:proofs}
\subsection{\textbf{The first order approximation}}
We define the Freidlin-Wentzell first order approximation given by
\begin{equation}\label{def:Yprocess9}
Y^{\e}_t(x)=X^{0}_t(x)+\e \yY^{x}_t, \qquad t\gqq 0,
\end{equation}
where
$(\yY^{x}_t)_{t\gqq 0}$ is the unique strong solution of the linear inhomogeneous SDE
\begin{align}\label{eq: Yxt9}
\left\{
\begin{array}{r@{\;=\;}l}
\ud \yY^x_t & -Db(X^{0}_t(x)) \yY^{x}_t\ud t+\ud L_t \quad \textrm{ for any } t\gqq 0,\\
\yY^{x}_0 & 0.
\end{array}
\right.
\end{align}
In \cite{BHPTV}, Lemma~C.4 in Section~C.4 it is shown that $Y^{\e}_t(x)$ converges in total variation distance to  a unique limiting distribution $\mu^\e_*$ as $t\ra \infty$. Moreover, it is shown there that
$\mu^\e_*\stackrel{d}=\e \oO_\infty$, where $\oO_\infty$ 
is the unique invariant probability distribution of the homogeneous Ornstein-Uhlenbeck dynamics
\begin{equation}
\ud \oO_t=-Db(0)\oO_t+\ud L_t.
\end{equation}
In the sequel we reduce the nonlinear ergodic convergence of $X^\e_t(x)$ to the ergodic convergence of the Freidlin-Wentzell linearization $Y^\e_t(x)$ in \eqref{eq:pivot} up to error terms. For any $0<p\lqq p_*$, by the triangle inequality it follows that
\[
\Wp(X^\e_t(x),\mu^\e)\lqq \Wp(X^\e_t(x),Y^\e_t(x))+\Wp(Y^\e_t(x),\mu^\e_*)+\Wp(\mu^\e_*,\mu^\e)
\]
for any $t\gqq 0$, $x\in \mathbb{R}^d$.
Analogously we estimate
\[
\Wp(Y^\e_t(x),\mu^\e_*)\lqq \Wp(Y^\e_t(x),X^\e_t(x))+\Wp(X^\e_t(x),\mu^\e)+\Wp(\mu^\e,\mu^\e_*).
\]
Combining the preceding inequalities we obtain the linear approximation
\begin{equation}\label{eq:pivot}
\left|\Wp(X^\e_t(x),\mu^\e)-\Wp(Y^\e_t(x),\mu^\e_*) \right|\lqq 
 \Wp(X^\e_t(x),Y^\e_t(x))+\Wp(\mu^\e,\mu^\e_*)
\end{equation}
for any $t\gqq 0$, $x\in \mathbb{R}^d$.
In Proposition~\ref{cl:linearization} given in Appendix~\ref{app:B2} we show that for any $t_\e=O(|\ln(\e)|)$ and $0< p < p_*$ the following limit holds
\begin{equation}\label{prop:aproxlineal}
\lim\limits_{\e\ra 0}\frac{\Wp(X^\e_{t_\e}(x),Y^\e_{t_\e}(x))}{\e^{1\wedge p}}=0.
\end{equation}
Moreover, in Lemma~\ref{cl:limit} we show that for $0< p < p_*$
\begin{equation}\label{lem:limites}
\lim_{\e \ra 0}\frac{\Wp(\mu^\e_*,\mu^\e)}{\e^{1\wedge p}}=0.
\end{equation}
\subsection{\textbf{Derivation of the cutoff  phenomenon}}
In the sequel, we analyze the asymptotic behavior of 
$\Wp(Y^\e_t(x), \mu^\e_*)\cdot \e^{-(1\wedge p)}$ from which we recognize the cutoff of the Freidlin-Wentzell linearization $Y^\e_t(x)$. 
By the triangle inequality, translation invariance, homogeneity and shift linearity given in Lemma~\ref{lem:properties} we obtain for $0< p\lqq p_*$
\begin{align*}
\Wp(Y^\e_t(x), \mu^\e_*) 
&= \Wp(X^0_t(x) + \e \yY^x_t,  \e \oO_\infty) \\
&\lqq \Wp(X^0_t(x) + \e \yY^x_t,  X^0_t(x) + \e \oO_\infty) + \Wp(X^0_t(x) + \e \oO_\infty, \e \oO_\infty)\\
&= \e^{1\wedge p}\cdot \Wp(\yY^x_t, \oO_\infty) +\e^{1\wedge p}\cdot \Wp(\e^{-1} \cdot X^0_t(x) + \oO_\infty, \oO_\infty).
\end{align*} 
Analogously we deduce 
\begin{align*}
\Wp(Y^\e_t(x), \mu^\e_*) 
&\gqq \e^{1\wedge p}\cdot \Wp(\e^{-1}\cdot X^0_t(x) + \oO_\infty, \oO_\infty) -\e^{1\wedge p}\cdot \Wp(\yY^x_t, \oO_\infty). 
\end{align*} 
Consequently, 
\begin{equation}\label{eq:NLprofileApprox}
\Big|\frac{\Wp(Y^\e_t(x), \mu^\e_*)}{\e^{1\wedge p}} -  \Wp(\e^{-1} \cdot X^0_t(x) + \oO_\infty, \oO_\infty) \Big| \lqq \Wp(\yY^x_t, \oO_\infty). 
\end{equation}
The right-hand side of \eqref{eq:NLprofileApprox} does not depend of $\e$ and by Lemma~\ref{lem:inhomOU} it tends to $0$ as $t\ra \infty$.
It is therefore enough to study the precise longterm 
behavior of $\Wp(\e^{-1}\cdot X^0_t(x) + \oO_\infty, \oO_\infty)$ in order to derive the cutoff phenomenon.
\subsection{\textbf{Proof of Theorem~\ref{th:linearwhite}}}
\noindent 
For any $0< p< p_*$, $\ft^x_\e$ and $w_\e$ being given in statement and $r\in \RR$, 
\eqref{eq:pivot}, \eqref{prop:aproxlineal}, \eqref{lem:limites}, \eqref{eq:NLprofileApprox} yield
\begin{equation*}\label{eq:splitting}
\begin{split}
&\limsup\limits_{\e\ra 0}\frac{\Wp(X^\e_{\ft^x_\e + r \cdot w_\e} (x),\mu^\e)}{\e^{1\wedge p}}=\limsup\limits_{\e\ra 0}\Wp\Big(\frac{X^0_t(x)}{\e} + \oO_\infty, \oO_\infty\Big),\\
&\liminf\limits_{\e\ra 0}\frac{\Wp(X^\e_{\ft^x_\e + r \cdot w_\e} (x),\mu^\e)}{\e^{1\wedge p}}=\liminf\limits_{\e\ra 0}\Wp\Big(\frac{X^0_t(x)}{\e} + \oO_\infty, \oO_\infty\Big).
\end{split}
\end{equation*}
For short, we define 
\begin{equation}\label{def:lambda}
\fT^x_\e=\ft^x_\e+r\cdot w_\e-\tau^x
\quad \textrm{ and } \quad \Lambda^x(\e):=\frac{(\fT^x_\e)^{\ell-1}}{\e e^{\fq^x \fT^x_\e}} \sum_{k=1}^{m} e^{i  \fT^x_\e\theta^x_k} v^x_k.
\end{equation}
\noindent
\textbf{Claim A.}
\begin{align*}
\limsup_{\e\ra 0}\frac{\Wp(X_{\ft^x_\e+r\cdot w_\e}^\e(x), \mu^\e)}{\e^{1\wedge p}}=\limsup_{\e\ra 0}\Wp\big(
\Lambda^x(\e)
 +  \oO_\infty,  \oO_\infty\big)
\end{align*}
and
\begin{align*}
\liminf_{\e\ra 0}\frac{\Wp(X_{\ft^x_\e+r\cdot w_\e}^\e(x), \mu^\e)}{\e^{1\wedge p}}=\liminf_{\e\ra 0}\Wp\big(
\Lambda^x(\e)
 +  \oO_\infty,  \oO_\infty\big).
\end{align*}
for  any $0<p< p_*$.
In particular, the limit
\begin{align}\label{eq:distancia}
\lim\limits_{\e\ra 0}\frac{\Wp(X_{\ft^x_\e+r\cdot w_\e}^\e(x), \mu^\e)}{\e^{1\wedge p}}\quad \textrm{ exists iff}\quad
\lim_{\e\ra 0}\Wp\big(
\Lambda^x(\e)
 +  \oO_\infty,  \oO_\infty\big)\quad \textrm{exists}.
\end{align}

\noindent
\textit{Proof of Claim A.}
In the sequel we study the asymptotics of the drift term $X^0_t(x) \cdot \e^{-1}$.
A straightforward calculation shows
\begin{equation}\label{eq:newscaling}
\lim\limits_{\e \ra 0} 
\frac{(\fT^x_\e)^{\ell-1} e^{-\fq^x \fT^x_\e}}{\e}=e^{-\fq^x \tau}(\fq^x)^{1-\ell}e^{-\fq^x r\cdot w}.
\end{equation}
The preceding limit implies 
with the help of the spectral decomposition \eqref{eq:hartmangrobman} given in Lemma~\ref{lem:jara29} and  
the triangle inequality that 
\begin{equation}\label{eq:replacement}
\begin{split}
\Wp\Big(\frac{X^0_{\ft^x_\e+r\cdot w_\e}(x)}{\e} +  \oO_\infty,  \oO_\infty\Big)
&\lqq 
 \Wp\Big(\Big(\frac{X^0_{\tau+\fT^x_\e}(x)}{\e}-
\Lambda^x(\e)\Big)+\oO_\infty,  \oO_\infty\Big)+
\Wp\Big(
\Lambda^x(\e)
 +  \oO_\infty,  \oO_\infty\Big).
\end{split}
\end{equation}
We set
\[
R^x_\e:= \Wp\Big(\Big(\frac{X^0_{\tau+\fT^x_\e}(x)}{\e}-
\Lambda^x(\e)\Big)
 +  \oO_\infty,  \oO_\infty\Big).
\]
Analogous reasoning yields
\begin{align*}
\Wp \Big(
\Lambda^x(\e)
 +  \oO_\infty,  \oO_\infty\Big)
 \lqq \Wp\Big(\frac{X^0_{\ft^x_\e+r\cdot w_\e}(x)}{\e} +  \oO_\infty,  \oO_\infty\Big)+R^x_\e.
\end{align*}
In the sequel it remains to show that $R^x_\e\ra 0$ as $\e \ra 0$. By the continuity  of $z\ra \Wp(z+\oO_\infty,\oO_\infty)$ at $z=0$
it is enough to prove 
\[
\Big|\frac{X^0_{\tau+\fT^x_\e}(x)}{\e}-
\Lambda^x(\e)\Big|\ra  0, \quad \e\ra 0,
\]
which is valid due to the 
limit \eqref{eq:hartmangrobman} and  \eqref{eq:newscaling}.
This finishes the proof of Claim A. $\square$

\bigskip
In the sequel, we prove the window cutoff asymptotics in \eqref{eq:infimosupremo}.
Note that $\Lambda^x(\e)$ is uniformly bounded on $\e\in (0,1]$.
For any accumulation point $U$ (as $\e\ra 0$)  of $\big(\Wp (
\Lambda^x(\e)
 +  \oO_\infty,  \oO_\infty)\big)_{\e\in (0,1]}$ there exists a sequence $(\e_k)_{k\in \NN}$, $\e_k\ra 0$ as $k\ra \infty$, such that 
\[
U=\lim\limits_{k\ra \infty}
\Wp \big(
\Lambda^x(\e_k)
 +  \oO_\infty,  \oO_\infty\big).
\]
The Bolzano-Weierstrass theorem for the sequence $(\Lambda(\e_k))_{k\in \NN}$,
the limit \eqref{eq:newscaling}
 and the  continuity of $\Wp$ yield
\begin{align}\label{eq:aculimit}
U=\Wp(e^{-\fq^x \tau^x}(\fq^x)^{1-\ell^x}e^{-\fq^x w r} u+\oO_\infty,\oO_\infty)\quad \textrm{ for some } u\in \omega(x).
\end{align}
In particular,
\begin{equation*}
\begin{split}
&\limsup\limits_{\e \ra 0}\Wp \big(
\Lambda^x(\e)
 +  \oO_\infty,  \oO_\infty\big)=\Wp(e^{-\fq^x \tau^x}(\fq^x)^{1-\ell^x}e^{-\fq^x w r}\hat u+\oO_\infty,\oO_\infty),\\
&\liminf\limits_{\e \ra 0}\Wp \big(
\Lambda^x(\e)
 +  \oO_\infty,  \oO_\infty\big)=\Wp(e^{-\fq^x \tau^x}(\fq^x)^{1-\ell^x}e^{-\fq^x w r}\check u+\oO_\infty,\oO_\infty),
\end{split}
\end{equation*} 
where $\hat u,\check u\in \omega(x)$ and $\check u\neq 0$ by \eqref{eq:belowabovebound}.
Hence
item d) in Lemma~\ref{lem:properties} implies
\begin{align*}
&\lim\limits_{r\ra \infty}\limsup\limits_{\e \ra 0}\Wp \big(
\Lambda^x(\e)
 +  \oO_\infty,  \oO_\infty\big)=0 \quad \textrm{ and }\quad
\lim\limits_{r\ra -\infty}\liminf\limits_{\e \ra 0}\Wp \big(
\Lambda^x(\e)
 +  \oO_\infty,  \oO_\infty\big)=\infty.
\end{align*}
This finishes the proof of Theorem~\ref{th:linearwhite}.

\subsection{\textbf{Proof of Theorem~\ref{th:profileabstract9}}}
We keep the notation \eqref{def:lambda}
of the proof of Theorem~\ref{th:linearwhite}.
By \eqref{eq:distancia} it is enough to prove that the limit 
\begin{equation}\label{eq:limiteexists}
\lim\limits_{\e\ra 0}\Wp \Big(
\Lambda^x(\e)
 +  \oO_\infty,  \oO_\infty\Big)\quad \textrm{ exists}.
\end{equation}
We recall the definition of $\Lambda^x(\e)$ \eqref{def:lambda} and the limit \eqref{eq:newscaling}. By \eqref{eq:aculimit} we have 
\begin{align}
&\left\{
\textrm{accumulation points of }
\Wp \big(
\Lambda^x(\e)
 +  \oO_\infty,  \oO_\infty\big) \textrm{ as } \e \ra 0\right\}
 \nonumber
 \\
&\hspace{2cm}=
\left\{
\Wp\big((e^{-\fq^x \tau^x}(\fq^x)^{1-\ell^x}e^{-\fq^x w r})\, u+\oO_\infty,\oO_\infty\big)
: u\in \omega(x) \right\}.
\label{eq:igualconj}
\end{align}
For $p\gqq 1$,
the shift linearity given in item d) of Lemma~\ref{lem:properties} implies
\begin{equation}\label{eq:shiftlinealp}
\Wp(e^{-\fq^x \tau^x}(\fq^x)^{1-\ell^x}e^{-\fq^x w r} u+\oO_\infty,\oO_\infty)=
e^{-\fq^x \tau^x}(\fq^x)^{1-\ell^x}e^{-\fq^x w r}|u|.
\end{equation}
Combining \eqref{eq:igualconj} and \eqref{eq:shiftlinealp} we infer
\begin{align}
&\left\{
\Wp\big((e^{-\fq^x \tau^x}(\fq^x)^{1-\ell^x}e^{-\fq^x w r})\, u+\oO_\infty,\oO_\infty\big)
: u\in \omega(x) \right\}\\
&\hspace{4cm}=
\left\{
e^{-\fq^x \tau^x}(\fq^x)^{1-\ell^x}e^{-\fq^x w r}\,|u|
: u\in \omega(x) \right\}.\label{eq:igualconj1}
\end{align}
Hence \eqref{eq:igualconj} and 
\eqref{eq:igualconj1}
imply that the limit \eqref{eq:limiteexists} exists if and only if the right-hand side of \eqref{eq:igualconj1} has exactly one element. This is equivalent to $\omega(x)$ being contained in a sphere in $\RR^d$ with respect to the Euclidean distance.
For $p\in (0,1)$ the shift linearity is not valid and we are stuck after  \eqref{eq:igualconj}. Consequently, \eqref{eq:igualconj}
holds true and the limit 
\eqref{eq:limiteexists} exists if and only if for all $\lambda>0$
the  function
\[
\omega(x)\ni u\mapsto \Wp(\lambda u+\oO_\infty,\oO_\infty)\quad \textrm{ is constant}. 
\] 
This finishes the proof of Theorem~\ref{th:profileabstract9}. 

\appendix
\section{\textbf{Existence of the invariant measure}}\label{AP1:existencia}
\subsection{\bf{Invariant distribution $\mu^\e$}}
In the sequel we show the existence of a unique invariant distribution $\mu^\e$ of the solution of \eqref{eq:SDE99} for any $\e>0$.
We stress that beyond 
the existence of moments (Hypothesis~\ref{hyp:moments9}), this does not include any regularity such as absolute continuity whatsoever in our setting. For instance, our setting covers nonlinear oscillators with degenerate noise in Example~\ref{ex:oscillator}.

We recall the standing assumptions Hypothesis~\ref{hyp:dissipa9} with $\delta>0$ and Hypothesis~\ref{hyp:moments9} with $p_*>0$.
For the existence of the invariant probability measure $\mu^\e$ it is enough to verify
the following condition by \cite{PraGaZa}, p. 388.
For some $x\in \RR^d$, the limit
\begin{equation}\label{eq:daprato}
\lim_{R\ra \infty}\liminf_{T\ra \infty}\frac{1}{T}\int_{0}^{T}
\PP\left(|X^\e_t(x)|>R\right)\ud t=0.
\end{equation}
Hypotheses \ref{hyp:dissipa9}  and \ref{hyp:moments9} imply inequality
(D.3) p. 71 in \cite{BHPTV}. That is to say, for
$\gamma \in (0,1\wedge p_*)$
 there exist positive constants $C_1,C_2,C_3$ such that
for all $x\in \RR^d$, $\e>0$, $t\gqq 0$, $A=\e \Pi$, $c=\e$ 
\begin{equation}\label{eq:momentos}
\EE[|X^\e_t(x)|^\gamma]\lqq e^{-\delta \gamma t}|x|^\gamma+C_3,
\end{equation}
where $C_3= c^\gamma+\frac{1}{\gamma \delta}
\big(\gamma \delta c^\gamma+ C_1\|A\|^\gamma+ C_2 c^{\gamma-2}\|A\|^{2}  \big)=\e^\gamma \cdot
\big(2+
\frac{1}{\gamma \delta}(
C_1\|\Pi\|^\gamma+C_2 \|\Pi\|^2)
\big)
$.
Inequality
\eqref{eq:momentos} implies \eqref{eq:daprato} with the help of the Markov inequality. 

For the uniqueness, it enough 
to verify the following condition given in Theorem~11.4.3 in \cite{Kallianpur}.
For any given positive numbers $\eta$, $\delta$ and $R$, there exists a positive constant 
$S$ such that
\begin{equation}\label{eq:unique}
\frac{1}{T}\int_{0}^{T}
\mathbb{P}\left(|X^\e_t(x)-X^\e_t(y)|\gqq \delta\right)\ud t <  \eta \quad \textrm{ for all }\quad |x|,|y|\lqq R \quad \textrm{ and } \quad T>S.
\end{equation}
Hypotheses \ref{hyp:dissipa9}, \ref{hyp:moments9}  and the additivity of the noise imply
(D.5) p. 71 in \cite{BHPTV}. In other words, for any $\gamma\in (0,1\wedge p_*)$, $x,y\in \RR^d$, $t\gqq 0$, $\e>0$, $c=\e$ we have 
\begin{equation*}
\mathbb{E}[|X^\e_t(x)-X^\e_t(y)|^\gamma]\lqq |x-y|^\gamma e^{-\delta \gamma t}+2\e^\gamma.
\end{equation*}
The preceding inequality implies \eqref{eq:unique} with the help of the Markov inequality.

\subsection{\bf{Convergence to $\mu^\e$ in $\W$ for $p_*>0$}}\label{ss:W-ergodicity}
Due to Hypothesis~\ref{hyp:dissipa9} and the additive of the noise the natural coupling yields
\begin{align}\label{eq:x-y}
|X^{\e}_t(x)-X^{\e}_t(y)|\lqq |x-y| e^{-\delta t}\quad \textrm{ for all }\quad x,y\in \RR^d, t\gqq 0.
\end{align}
Since $\mu^\e$ is an invariant measure and $X^\e$ is a Feller process, disintegration and \eqref{eq:x-y} imply
\begin{equation}\label{eq:converlimiteW}
\begin{split}
\W(X^\e_t(x),\mu^{\e})&\lqq 
\int_{\RR^d }\W(X^\e_t(x),X^\e_t(y))\mu^\e(\ud y)\lqq  e^{-({1\wedge p_*})\delta t}
\int_{\RR^d }|x-y|^{1\wedge p_*}\mu^\e(\ud y)\\
&\lqq e^{-({1\wedge p_*})\delta t}|x|^{1\wedge p_*}+e^{-({1\wedge p_*})\delta t}\int_{\RR^d}|y|^{1\wedge p_*}\mu^{\e}(\ud y).
\end{split}
\end{equation}
The preceding right-hand side tends to zero as  $t\ra \infty$ provided that $\int_{\RR^d}|y|^{1\wedge p_*}\mu^{\e}(\ud y)<\infty$ which is shown in (2.84) p. 48 in \cite{BHPTV}.
\section{\textbf{$L^{p}$ estimates for $p\in (0,p_*)$}}
We recall the L\'evy-Khinchin formula of $L$ with characteristic triple $(a,\Sigma,\nu)$ 
\[
\ln(\EE\big[e^{i\< u, L_t\>}\big]) = t\left(i\< a, u\> - \frac{1}{2} \< u, \Sigma u\> + 
\int_{\RR^d} \Big(e^{i\< u, z\>} - 1 - i\< u, z\> \ind_{(0,1)}(|z|)\Big) \nu(\ud z)\right)
\]
and the pathwise L\'evy-It\^o representation  
\begin{equation}\label{eq:LevyIto}
L_t=at+\Sigma^{1/2}B_t+\int_{0}^{t}\int_{|z|\lqq 1} z \ti{N}(\ud s \ud z)+
\int_{0}^{t}\int_{|z|>1} z {N}(\ud s \ud z),
\end{equation}
where $(B_t)_{t\gqq 0}$ is a standard Brownian motion in $\RR^d$, $N$ is a Poisson random measure on $[0,\infty)\times \RR^d$ with intensity measure $\ud t\otimes \nu(\ud z)$ 
and
$\ti N$ is the compensated counterpart of $N$. See \cite{Sa} for further details on L\'evy processes. 

We recall the standing assumptions Hypothesis~\ref{hyp:dissipa9} with $\delta>0$ and Hypothesis~\ref{hyp:moments9} with $p_*>0$.

\subsection{\textbf{Localization}}
We start with the probability estimate of the event 
\begin{equation*}
\dD_t^x=\Big\{\sup\limits_{0\lqq s\lqq t}|\yY_s^x|> \vartheta\Big\}, \qquad \vartheta >0.
\end{equation*}
where $\yY^x$ is given in \eqref{eq: Yxt9}. Note that $\zZ_\cdot(0) = \yY^0_\cdot$ satisfies 
\begin{equation}\label{eq:OUx}
\ud \zZ_t(x) = - Db(0) \zZ_t(x) \ud t + \ud  L_t, \qquad \zZ_0(x) = x
\end{equation}
for $x= 0$.   
\begin{lem}\label{lem: Siorpaes}
For any $\gamma\in (0,p_*\wedge 1]$ there is a positive constant $C$ such that for any $\vartheta \gqq 1$, $x\in \RR^d$ and $t\gqq 0$  we have 
\begin{equation}\label{eq:siorpaesnew}
\PP(\dD_t^x)\lqq C\,t \vartheta^{-\gamma }.
\end{equation}
\end{lem}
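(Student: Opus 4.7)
The plan is to combine the variation-of-constants representation of $\yY^x$ with the L\'evy--It\^o decomposition of $L$, splitting the driving noise into a compound-Poisson large-jump component and a remainder with all finite moments, and estimating the two contributions separately. First, write $\yY^x_t = \int_0^t \Phi^x_{t,u}\,\ud L_u$ where $\Phi^x_{t,u}$ is the fundamental matrix solution of $\partial_t\Phi = -Db(X^0_t(x))\Phi$ with $\Phi^x_{u,u}$ the identity. Hypothesis~\ref{hyp:dissipa9} yields $\<Db(z)v,v\>\gqq \delta|v|^2$ for all $v,z\in\RR^d$, hence $\partial_t|\Phi^x_{t,u}v|^2\lqq -2\delta|\Phi^x_{t,u}v|^2$, so one gets the uniform operator-norm contraction $\|\Phi^x_{t,u}\|\lqq e^{-\delta(t-u)}\lqq 1$. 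Via \eqref{eq:LevyIto} split $L=L^{\mathrm{b}}+L^{\mathrm{c}}$, where $L^{\mathrm{b}}_t=\int_0^t\int_{|z|>1}z\,N(\ud s,\ud z)$ is the compound-Poisson large-jump component and $L^{\mathrm{c}}_t=at+\Sigma^{1/2}B_t+\int_0^t\int_{|z|\lqq 1}z\,\ti N(\ud s,\ud z)$ has all finite moments, and correspondingly set $\yY^{x}=\yY^{x,\mathrm{b}}+\yY^{x,\mathrm{c}}$. A union bound then reduces the claim to controlling the two suprema separately.

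For the large-jump piece, variation of constants and the contraction bound yield the pathwise inequality $\sup_{s\lqq t}|\yY^{x,\mathrm{b}}_s|\lqq \sum_{0<u\lqq t}|\Delta L^{\mathrm{b}}_u|$. Subadditivity $(\sum|a_i|)^\gamma\lqq \sum|a_i|^\gamma$ (valid for $\gamma\in(0,1]$), Campbell's formula for $N$ and Hypothesis~\ref{hyp:moments9} produce
\[
\EE\Bigl[\sup_{s\lqq t}|\yY^{x,\mathrm{b}}_s|^\gamma\Bigr]\lqq t\int_{|z|>1}|z|^\gamma\nu(\ud z)\lqq Ct,
\]
so Markov's inequality delivers $\PP(\sup_{s\lqq t}|\yY^{x,\mathrm{b}}_s|>\vartheta/2)\lqq Ct\vartheta^{-\gamma}$ for every $\vartheta\gqq 1$.

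For the continuous/small-jump piece, apply It\^o's formula to $|\yY^{x,\mathrm{c}}_s|^2$ to obtain the semimartingale decomposition $|\yY^{x,\mathrm{c}}_s|^2=A_s+M_s$ with $A$ a predictable, absolutely continuous finite-variation part and $M$ a true martingale. The dissipativity bound $-2\<Db(X^0_s(x))y,y\>\lqq -2\delta|y|^2$ dominates the contributions from $a$, $\Sigma$ and $\int_{|z|\lqq 1}|z|^2\nu(\ud z)$, which gives $\ud A_s\lqq C\,\ud s$; meanwhile, an explicit computation of the predictable bracket $\<M\>_t$ in terms of $\EE[|\yY^{x,\mathrm{c}}_s|^2]$, together with the a priori bound $\EE[|\yY^{x,\mathrm{c}}_s|^2]\lqq C\min(s,1)$ obtained from Gr\"onwall and dissipativity, yields $\EE[\<M\>_t]\lqq Ct\min(t,1)$. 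Doob's $L^2$-inequality then produces the uniform-in-$t$ estimate $\EE[\sup_{s\lqq t}|\yY^{x,\mathrm{c}}_s|^2]\lqq Ct$, and Chebyshev's inequality with exponent $2$ gives $\PP(\sup_{s\lqq t}|\yY^{x,\mathrm{c}}_s|>\vartheta/2)\lqq C t\vartheta^{-2}\lqq C t\vartheta^{-\gamma}$ for $\vartheta\gqq 1$ and $\gamma\lqq 2$, which combined with the large-jump bound proves \eqref{eq:siorpaesnew}.

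The main obstacle is the maximal bound for $\yY^{x,\mathrm{c}}$: since the drift $-Db(X^0_s(x))$ is time-inhomogeneous, $\yY^{x,\mathrm{c}}$ itself is not a martingale and Doob's maximal inequality cannot be applied directly. The resolution is to pass through the It\^o semimartingale decomposition of $|\yY^{x,\mathrm{c}}|^2$ and apply Doob only to the true martingale part $M$, while bounding the predictable part $A$ pathwise via dissipativity; this is precisely the structure exploited by the pathwise BDG-type maximal inequality of Siorpaes that lends the lemma its name.
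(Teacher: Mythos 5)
Your route differs from the paper's: you split $L$ into a compound-Poisson large-jump part $L^{\mathrm{b}}$ and a remainder $L^{\mathrm{c}}$ (and correspondingly $\yY^x=\yY^{x,\mathrm{b}}+\yY^{x,\mathrm{c}}$), treating each piece separately, whereas the paper applies Siorpaes' pathwise BDG inequality to $\sup_{s\lqq t}|\yY^x_s|$ directly and then splits the resulting stochastic integral and bracket, each estimated by one Chebyshev application. Your treatment of $\yY^{x,\mathrm{b}}$ is correct and clean: the fundamental-matrix contraction $\|\Phi^x_{t,u}\|\lqq e^{-\delta(t-u)}\lqq 1$, the pathwise bound $\sup_{s\lqq t}|\yY^{x,\mathrm{b}}_s|\lqq\sum_{u\lqq t}|\Delta L^{\mathrm{b}}_u|$, subadditivity of $|\cdot|^\gamma$, Campbell's formula and Markov give exactly $Ct\vartheta^{-\gamma}$.

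The gap is in the $\yY^{x,\mathrm{c}}$ piece, specifically the claim $\EE[\<M\>_t]\lqq Ct\min(t,1)$. With the semimartingale decomposition you describe ($A$ predictable and absolutely continuous, $M$ a true martingale), the compensated jump part of $\ud|\yY^{x,\mathrm{c}}_s|^2$ is $\int_{|z|\lqq 1}\bigl(2\<\yY^{x,\mathrm{c}}_{s-},z\>+|z|^2\bigr)\ti N(\ud s\,\ud z)$, so the integrand of $\<M\>_s$ contains $\int_{|z|\lqq 1}\bigl(2\<\yY^{x,\mathrm{c}}_s,z\>+|z|^2\bigr)^2\nu(\ud z)\gqq\tfrac12\int_{|z|\lqq 1}|z|^4\nu(\ud z)-4|\yY^{x,\mathrm{c}}_s|^2\int_{|z|\lqq 1}|z|^2\nu(\ud z)$, i.e.\ a strictly positive constant at $s=0$. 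Hence only $\EE[\<M\>_t]\lqq Ct$ holds, Doob then yields $\EE[\sup_{s\lqq t}M_s^2]\lqq Ct$ and so $\EE[\sup_{s\lqq t}|\yY^{x,\mathrm{c}}_s|]^2\lqq\EE[\sup_{s\lqq t}|\yY^{x,\mathrm{c}}_s|^2]\lqq C(t+\sqrt t)$, not $Ct$. Chebyshev then gives $\PP(\sup_{s\lqq t}|\yY^{x,\mathrm{c}}_s|>\vartheta/2)\lqq C(t+\sqrt t)\vartheta^{-2}$, which is \emph{not} $\lqq Ct\vartheta^{-\gamma}$ uniformly in $t$: for fixed $\vartheta\gqq 1$ and $t\ra 0$ the ratio $\sqrt t\,\vartheta^{-2}/(t\vartheta^{-\gamma})=t^{-1/2}\vartheta^{\gamma-2}$ blows up. Since the lemma asserts the bound for all $t\gqq 0$, your argument as written does not prove it. The issue disappears if you do not compensate the squared-jump term: write $|\yY^{x,\mathrm{c}}_s|^2 = \ti A_s + \int_0^s\int_{|z|\lqq 1}|z|^2N(\ud u\,\ud z)+\ti M_s$ with $\ud\ti M_s=2\<\yY^{x,\mathrm{c}}_{s-},\Sigma^{1/2}\ud B_s\>+2\int_{|z|\lqq 1}\<\yY^{x,\mathrm{c}}_{s-},z\>\ti N(\ud s\,\ud z)$; then the uncompensated $N$-integral is nonnegative, increasing, with expectation $\lqq Ct$, and $\<\ti M\>$'s integrand is $\lqq C|\yY^{x,\mathrm{c}}_s|^2$, giving the genuine $Ct\min(t,1)$ bound and closing the argument. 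Keeping the quadratic variation of the jumps explicit as a nonnegative term rather than absorbing it into the martingale is precisely what Siorpaes' pathwise bound $6\sqrt{[\yY^x]_t}$ does for the paper, so your final remark identifies the right mechanism but your intermediate decomposition does not realize it.
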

\begin{proof}
By Theorem~1 in \cite{SIORPAES} we have
\begin{align*}
\sup\limits_{0\lqq s\lqq t}|\yY^x_s|\lqq  6\sqrt{[\yY^x_\cdot(0)]_s}+2\int_{0}^{t} H_{s-}\, \ud L_s,\qquad 
\mbox{ where }\qquad 
H_{s-}=\frac{\yY^x_{s-}}{\sqrt{\sup\limits_{s\lqq t} (|\yY^x_{s-}|^2+[\yY^x_\cdot]_{s-})}}.
\end{align*}
In particular, it follows
\begin{align*}
&[\yY^x_\cdot]_{t} = [L]_{t} = \int_0^t \int_{|z|\lqq 1} |z|^2 N(\ud s\ud z)\quad \textrm{such that }\\
&\int_0^{t}  H_{s-}\, \ud \yY^x_s = \int_0^t \< H_{s}, - Db(0) \yY^x_s \> \ud s + 
\int_0^t \int_{|z|\lqq 1}\< H_{s-}, z\> \ti N(\ud s\ud z) +\int_0^t \int_{|z|> 1}\< H_{s-}, z\> N(\ud s\ud z).
\end{align*}
By Hypothesis~\ref{hyp:dissipa9} we obtain 
$
\int_0^t \< H_{s-}, - Db(X^0_s(x)) \yY^x_s \> \ud s \lqq 0 
$ a.s. 
Hence 
\begin{align*}
&\PP\Big( \sup_{0\lqq s\lqq t}|\yY^x_s| > \vartheta\Big)\\ 
&\lqq \PP\Big(6 \Big(\int_0^{t} \int\limits_{|z|\lqq 1} |z|^2 N(\ud s\ud z)\Big)^{\nicefrac{1}{2}}  +  2\int_0^{t} \int\limits_{|z|\lqq 1}\< H_{s-}, z\> \ti N(\ud s\ud z) + 
2\int_0^{t} \int\limits_{|z|> 1}\< H_{s-}, z\> N(\ud s\ud z)  > \vartheta\Big) \\
&\lqq \PP\Big(\int_0^{t} \int_{|z|\lqq 1} |z|^2 N(\ud s\ud z) > \frac{\vartheta^2}{18^2} \Big)  + \PP\Big(\int_0^{t} \int_{|z|\lqq 1}\< H_{s-}, z\> \ti N(\ud s\ud z) > \frac{2\vartheta}{3} \Big) \\
&\qquad + \PP\Big(\int_0^{t} \int_{|z|> 1}\< H_{s-}, z\> N(\ud s\ud z) > \frac{2\vartheta}{3} \Big).
\end{align*}
We continue term by term. By the Chebyshev inequality we obtain  
\begin{align*}
\PP\Big(\int_0^{t} \int_{|z|\lqq 1} |z|^2 N(\ud s\ud z) > \frac{\vartheta^2}{18^2} \Big) 
&\lqq  \frac{18^2 t }{\vartheta^2} \int_{|z|\lqq 1} |z|^2 \nu(\ud z) =:C_1  \frac{t}{\vartheta^2}
\end{align*}
and 
\begin{align*}
\PP\Big(\int_0^{t} \int_{|z|\lqq 1}\< H_{s-}, z\> \ti N(\ud s\ud z) > \frac{2\vartheta}{3}\Big) 
&\lqq \Big(\frac{3}{2}\Big)^2\frac{1}{\vartheta^2} \EE\Big[\Big(\int_0^{t} \int_{|z|\lqq 1}\< H_{s-}, z\> \ti N(\ud s\ud z\Big)^2\Big]\nonumber\\
&= \Big(\frac{3}{2}\Big)^2\frac{1}{\vartheta^2} \EE\Big[\int_0^{t} \int_{|z|\lqq 1}\< H_{s-}, z\>^2 \nu(\ud z)\ud s\Big]
\nonumber\\
&\lqq  \Big(\frac{3}{2}\Big)^2\frac{t}{\vartheta^2}   \int_{|z|\lqq 1} |z|^2 \nu(\ud z)
=:C_2  \frac{t}{\vartheta^2}.
\end{align*}
Finally, for $\gamma\in (0,p_*\wedge 1]$ we have 
\begin{align*}
\PP\Big(\int_0^{t} \int_{|z|> 1}\< H_{s-}, z\> N(\ud s\ud z) > \frac{2\vartheta}{3}\Big) 
&\lqq \PP\Big(\int_0^{t} \int_{|z|> 1} |z| N(\ud s\ud z) > \frac{2\vartheta}{3}\Big) \nonumber\\
&\lqq \Big(\frac{3}{2}\Big)^\gamma \frac{1}{\vartheta^\gamma}
\EE\Big[\Big(\int_0^{t} \int_{|z|> 1} |z| N(\ud s\ud z)\Big)^{\gamma}\Big] \nonumber\\
&\lqq \Big(\frac{3}{2}\Big)^\gamma \frac{1}{\vartheta^\gamma}
\EE\Big[\int_0^{t} \int_{|z|> 1} |z|^{\gamma} N(\ud s\ud z)\Big] 
\\
&= \Big(\frac{3}{2}\Big)^\gamma \frac{t}{\vartheta^\gamma} \int_{|z|> 1} |z|^{\gamma} \nu(\ud z)
=:C_3 \frac{t}{\vartheta^\gamma},
\end{align*}
where we have used the subadditivity of the power $\gamma$ in the sense of 
Subsection~1.1.2, see formula (1.6) in \cite{Bie}.  
This finishes the proof of the statement.
\end{proof}
\subsection{\textbf{First order approximation}}\label{app:B2}
We start with some technical preliminaries. 
In order to overcome that $u \mapsto |u|^{p}$ for $p \in (0, 2)$ is not twice continuously differentiable 
which turns out to be necessary for applying It\^o's formula 
we use the following $\cC^2$ norm approximation $|x|_c:= \sqrt{|x|^2 + c^2}, c> 0$, 
with the limiting case $|x|_0 = |x|$. It is well-behaved in the following 
sense. For any $c>0$ we have 
\[
c \lqq |x|_c \lqq |x| + c, \quad \nabla |x|_c := \frac{x}{|x|_c} 
\quad \textrm{ and }\quad 0 \lqq \frac{|x|}{|x|_c} < 1.
\]
Furthermore, it is straightforward to verify for 
$G(x)=|x|^p_c$ the following calculations
\[
\nabla G(x)=p |x|^{p-1}_c \frac{x}{|x|_c}=p|x|^{p-2}_c x\quad \textrm{ and } \quad|\nabla G(x)|\lqq p|x|^{p-1}_c.
\]
The $L_1$-matrix norm $\|\cdot \|_1$ of the respective Hessian $H_{G}(x)$, $x\in \RR^d$, can be estimated as follows   
\begin{align*}
\|H_{G}(x)\|_1
\lqq p d|x|^{p-2}_c+pd (2-p)|x|^{p-2}_c=C(p,d)|x|^{p-2}_c.
\end{align*}
For details of the estimates, we refer to p. 69 in \cite{BHPTV}.
Since $p \in (0,2)$ and $c\lqq |x|_c$, we obtain
\begin{equation}\label{eq:hessianest}
\sup_{x\in \RR^d}|H_{G}(x)|_1\lqq C(p,d)\,c^{p-2}.
\end{equation}

\medskip

\begin{prop}\label{cl:linearization}
We keep the notation of Theorem~\ref{th:linearwhite}. Then for any $x\in \mathbb{R}^d$, $r\in \RR$ 
and $p\in (0,p_*)$
it follows 
\begin{equation}\label{eq:WpXY}
\lim_{\e\ra 0}\frac{\Wp(X^\e_{\ft^x_\e + r \cdot w_\e}(x),Y^\e_{\ft^x_\e + r \cdot w_\e}(x))}{\e^{1\wedge p}}=0.
\end{equation}
\end{prop}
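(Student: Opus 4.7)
The plan is to couple $X^\e(x)$ and $Y^\e(x)$ synchronously via the same L\'evy noise $L$, to estimate the error $D^\e_t := X^\e_t(x) - Y^\e_t(x)$ by a pathwise Gronwall-type bound, and then to close the argument by a stopping-time localization that handles the heavy-tailed regime $p_*/2 \lqq p < p_*$.

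First, by Lemma~\ref{lem:properties}(e) applied to this coupling, it suffices to prove $\EE[|D^\e_{t_\e}|^{p}] = o(\e^{p})$, where $t_\e := \ft^x_\e + r w_\e$. Writing $E^\e_t := X^\e_t(x) - X^0_t(x)$, so that $D^\e_t = E^\e_t - \e\, \yY^x_t$, and subtracting \eqref{eq:SDE99}, the deterministic ODE for $X^0$, and \eqref{eq: Yxt9}, the stochastic integrals cancel and I obtain the random ODE
\[
\frac{\ud}{\ud t} D^\e_t = -Db(X^0_t(x))\, D^\e_t - R(t), \qquad D^\e_0 = 0,
\]
with Taylor remainder $R(t) = \int_0^1 (1-s)\, D^2 b(X^0_t(x) + s E^\e_t)(E^\e_t, E^\e_t)\, \ud s$ quadratic in $E^\e_t$. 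Differentiating the dissipativity inequality in Hypothesis~\ref{hyp:dissipa9} yields $\<Db(y) v, v\> \gqq \delta |v|^2$ uniformly in $y\in\RR^d$, so that $\tfrac{\ud}{\ud t}|D^\e_t| \lqq -\delta |D^\e_t| + |R(t)|$ and integration with $D^\e_0 = 0$ gives the pathwise bound
\[
|D^\e_t| \lqq \int_0^t e^{-\delta(t-s)} |R(s)|\, \ud s.
\]

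Second, I would localize with threshold $\vartheta_\e := \e^{-\alpha}$ for some $\alpha \in (0,\nicefrac{1}{2})$ and introduce $G_\e := \{\sup_{s\lqq t_\e}|\yY^x_s| \lqq \vartheta_\e\}$. On $G_\e$ one has $\e\vartheta_\e = \e^{1-\alpha}\to 0$, and the dissipativity of $b$ already implies $|X^0_s(x)|\lqq e^{-\delta s}|x|$, so the orbit stays in a fixed compact set. A stopping-time bootstrap based on $\sigma := \inf\{s\gqq 0 : |E^\e_s| > 2\e^{1-\alpha}\}$: on $[0, \sigma\wedge t_\e]$ the local boundedness of $\|D^2 b\|$ yields $|R(s)| \lqq C \e^{2(1-\alpha)}$, hence the pathwise bound on $D^\e$ gives $|D^\e_s| \lqq C \e^{2(1-\alpha)}/\delta$, and therefore $|E^\e_s| \lqq |D^\e_s| + \e|\yY^x_s| \lqq C \e^{2(1-\alpha)} + \e^{1-\alpha} < 2\e^{1-\alpha}$ for $\e$ small enough; this forces $\sigma > t_\e$ on $G_\e$ and yields $|D^\e_{t_\e}| \lqq C \e^{2(1-\alpha)}$ on $G_\e$. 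Since $2(1-\alpha) > 1$, this contributes $o(\e^p)$ to $\EE[|D^\e_{t_\e}|^p \ind_{G_\e}]$. On the complement, Lemma~\ref{lem: Siorpaes} gives $\PP(G_\e^c) \lqq C t_\e \e^{\alpha\gamma}$ for any $\gamma \in (0, p_* \wedge 1]$; combined with H\"older's inequality and the $L^{p'}$ moment bounds $\EE|D^\e_{t_\e}|^{p'} = O(\e^{p'})$ for some $p' \in (p, p_*)$ (which follow from $|D^\e| \lqq |X^\e|+|Y^\e|$ and the standard moment estimates on $X^\e$, on the linear OU-type process $\yY^x$, and on $X^0_{t_\e}(x)$, which is itself $O(\e)$ by Lemma~\ref{lem:jara29}), this produces a bound of order $\e^p |\ln\e|^{1-p/p'} \e^{\alpha\gamma(1-p/p')} = o(\e^p)$.

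The main obstacle will be the bootstrap step: the quadratic Taylor estimate on $R(t)$ requires a priori control of $E^\e$, while $E^\e = D^\e + \e\yY^x$ is itself only controlled through the integrated bound on $R$. The stopping-time argument breaks the circularity by exploiting the scale separation between $\e\yY^x$, which is $O(\e^{1-\alpha})$ on $G_\e$, and $D^\e$, which turns out to be one order smaller, namely $O(\e^{2(1-\alpha)})$. A secondary subtlety is that an isolated large jump of $L$ could push $E^\e$ across the bootstrap threshold at $\sigma$; such events are absorbed into $G_\e^c$ via Lemma~\ref{lem: Siorpaes}, or handled by slightly enlarging the threshold to accommodate one jump of size at most $\e\vartheta_\e$. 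The choice of $\alpha$ strictly inside $(0,\nicefrac{1}{2})$ provides exactly the slack needed to balance the good-event contribution $\e^{2p(1-\alpha)}$ against the bad-event factor $\e^{\alpha\gamma(1-p/p')}$, both being $o(\e^p)$.
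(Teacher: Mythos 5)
Your overall architecture mirrors the paper's: couple synchronously so $D^\e_t := X^\e_t(x) - Y^\e_t(x)$ satisfies a random ODE (your $\Delta^\e$ is the paper's $\Delta^\e$), split on a good event $\{\sup_{s\lqq t_\e}|\yY^x_s|\lqq \e^{-\alpha}\}$ (the paper's $\mathcal{A}^\e_t$ with $\vartheta=\e^{-\theta}$), estimate the bad event via Lemma~\ref{lem: Siorpaes} and H\"older, and absorb logarithmic factors in the $\e^{\alpha\gamma(1-p/p')}$ slack. On the good event, however, you take a genuinely different route, and it is worth spelling out the contrast. You Taylor-expand $b(X^\e_t)$ around $X^0_t$, so your remainder $R(t)$ is quadratic in $E^\e_t = X^\e_t - X^0_t$, and because $E^\e_t$ is not controlled a priori you need the stopping-time bootstrap. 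The paper instead writes the drift of $\Delta^\e_t$ as $-(b(X^\e_t)-b(Y^\e_t)) - T_1(t)$, i.e.\ it Taylor-expands $b(Y^\e_t)$ around $X^0_t$; then $|T_1(t)|\lqq C(|x|)|\e\yY^x_t|^2\lqq C\e^{2(1-\theta)}$ directly on the good event, and the first term pairs with $\Delta^\e_t$ by the full nonlinear dissipativity of Hypothesis~\ref{hyp:dissipa9}. The paper's decomposition therefore bypasses the bootstrap entirely; your pathwise Gronwall on $|D^\e_t|$, in turn, bypasses the paper's $|\cdot|_c$-regularization and the a priori moment bound on $\EE[|\Delta^\e_t|^{p-1}]$ that the paper feeds into its Gronwall-on-expectations. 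Arguably, combining the paper's decomposition with your pathwise bound is the cleanest of the three variants.

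Two points need care. First, the bootstrap stopping time should be defined on the \emph{continuous} process $D^\e$ rather than on $E^\e$: since $D^\e$ has no jumps (the $\e\,\ud L$ terms cancel between $X^\e$ and $Y^\e$), a first-exit time $\sigma=\inf\{s: |D^\e_s|>\e^{1-\alpha}\}$ satisfies $|D^\e_\sigma|=\e^{1-\alpha}$ exactly, and the bootstrap ($|D^\e_s|\lqq C\e^{2(1-\alpha)}/\delta<\e^{1-\alpha}$ on $[0,\sigma\wedge t_\e]\cap G_\e$ for small $\e$) closes cleanly without the jump-overshoot issue you flag; defining $\sigma$ on the c\`adl\`ag process $E^\e$, whose jumps on $G_\e$ are of size up to $2\e^{1-\alpha}$, makes the argument awkward. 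Second, the moment input on the bad event is not as ``standard'' as you suggest: the estimate $\EE[|D^\e_{t_\e}|^{p'}]=O(\e^{p'}\,\mathrm{poly}(|\ln\e|))$ cannot be read off from $|D^\e|\lqq|X^\e|+|Y^\e|$ alone (indeed $\EE[|X^\e_{t_\e}(x)|^{p'}]$ need not be $O(\e^{p'})$, since $|X^0_{t_\e}|$ decays at rate $\e^{\delta/\fq^x}$ which can be slower than $\e$); one really needs the decomposition $D^\e=E^\e-\e\yY^x$ together with $|X^0_{t_\e}|=O(\e)$ from \eqref{eq:ordenepsilon} and, crucially, the bounds $\EE[|E^\e_t|^{p'}]\lqq\e^{p'-\eta}R_\eta(t)$ from Lemma~\ref{lem:p<2} (easy, single It\^o) and Lemma~\ref{lem:p>2} (nontrivial, an iterated It\^o bootstrap) for the regime $p'\gqq 2$. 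Your outline is correct, but on the bad event it leans on exactly the same technical lemmas as the paper, and these deserve to be named rather than waved at as standard.
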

\begin{proof} By the domination property of the Wasserstein distance in 
Lemma~\ref{lem:properties} it is enough to show the preceding limit in the respective $L^{p}$ space. 
By \eqref{def:Yprocess9}
we have 
\begin{align*}
\ud Y^\e_t(x) &= \big(- Db(X^0_t(x))  Y^\e_t(x) + Db(X^0_t(x)) X^0_t(x)-b(X^0_t(x)) \big)\ud t + \e  \ud L_t.
\end{align*}
Let $\Delta^\e_t := X^\e_t(x)-Y^\e_t(x)$, $t\gqq 0$. Then 
\begin{align*}
\ud\, \Delta^\e_t  
&= -\big(b(X^\e_t(x))-b(Y^\e_t(x))\big)\ud t -\big(b(Y^\e_t(x)) -b(X^0_t(x)) - Db(X^0_t(x))  \e\yY^x_t \big)\ud t,
\end{align*}
where $(\yY^x_t)_{t\gqq 0}$ is given in \eqref{eq: Yxt9}.
An elementary estimate of the $p_*$-th power of a sum yields for all $t\gqq 0$
\begin{align}
\EE[|\Delta^\e_t|^{p_*}]
&=\EE[|(X^\e_t(x)-X^0_t(x))+\e \yY^x_t|^{p_*}]\lqq 
C_{p_*}\left(\EE[|X^\e_t(x)-X^0_t(x)|^{p_*}]+\e^{p_*}\EE[|\yY^x_t|^{p_*}]\right),
\label{eq:potencias}
\end{align}
where $C_{p_*}$ is a positive constant.
Since $(\yY^x_t)_{t\gqq 0}$ satisfies a dissipative 
linear equation, it exhibits the same integrability as $L$, 
which is straightforward to verify. There are a positive constant $\ti C_{p_*}$ and 
a function $S_{p_*}(t)$ of at most polynomial order such that
\begin{equation}\label{eq:ec1}
\EE[|\yY^x_t|^{p_*}] \lqq \ti C_{p_*} \EE[|L_t|^{p_*}]\lqq \ti C_{p_*} S_{p_*}(t) \quad \textrm{ for all } t\gqq 0.
\end{equation}
For the first term of the right-hand side of \eqref{eq:potencias},  
 Lemma~\ref{lem:p>2} and Lemma~\ref{lem:p<2} 
yield the following estimate.
For any $\eta\in (0,p_*)$ there is a map
$R_\eta:[0,\infty)\ra [0,\infty)$ 
which increases with polynomial order as $t$ tends to infinity,
  such that
\begin{align}\label{eq:ec2}
\EE[|X^\e_t(x)-X^0_t(x)|^{p_*}]\lqq \e^{p_*-\eta}R_\eta(t)
\quad \textrm{ for any } t\gqq 0.
\end{align}
\noindent
\textbf{We start with the case $p_*>1$ and $p\in (1,p_*)$. }
The H\"older inequality implies
\begin{align}\label{eq:p1mo}
\EE[|\Delta^\e_t|^{p-1}]&\lqq
\left(\EE[|\Delta^\e_t|^{p_*}]\right)^{\frac{p-1}{p_*}}\lqq \e^{\frac{p_*-\eta}{p_*}(p-1)}\ti R_\eta(t)=
\e^{p-1-\eta'}\ti R_\eta(t)\quad \textrm{ for any  } t\gqq 0,
\end{align}
where $\ti R_\eta$ is a function of at most polynomial order as $t$ tends to infinity and 
$\eta'=\eta (p-1) p_*^{-1}$.
For $\eta$ small enough we fix $\eta'\in (0,\nicefrac{1}{4})$.
Since $p_*>1$, we may choose $p\in (1,p_*)$ and $\theta\in (0,\nicefrac{1}{4})$. We split 
\begin{align}\label{ec:pivote33}
\EE[|\Delta^\e_t|^{p}] 
&= \EE[|\Delta^\e_t|^{p}\;\ind(\mathcal{A}^\e_t)]
+\EE[|\Delta^\e_t|^{p}\;\ind((\mathcal{A}^\e_t)^{\cc})],
\end{align}
where 
\begin{equation}\label{eq:evento}
\mathcal{A}^\e_t:=\Big\{\sup_{0\lqq s\lqq t} |\e \yY^x_s|\lqq \e^{1-\theta}\Big\}.
\end{equation}
First we prove that 
\[
\left(\EE[|\Delta^\e_t|^{p}\ind(\mathcal{A}^\e_t)]\right)^{1/p}\lqq \left(pC(|x|)  \int_{0}^{t}\ti R_\eta(s)\ud s\right)^{1/p} \e^{1+\frac{1-\eta'-2\theta}{p}},
\]
where $C(|x|)=\max\limits_{|u|\lqq |x|+1}|D^2b(u)|$. The choice of $\eta'$ and $\theta$ yields $1-\eta'-2\theta>\nicefrac{1}{4}$.
For notational convenience, we use the differential 
formalism, however, we stress that all differential inequalities are understood in the integral sense.
Since $p> 1$, the chain rule, Hypothesis~\ref{hyp:dissipa9} and
 Cauchy-Schwarz inequality
imply
\begin{align*}
\ud\,|\Delta^\e_t|^{p}
&=-p|\Delta^\e_t|^{p-2}\<\Delta^\e_t,b(X^\e_t(x))-b(Y^\e_t(x))\>\ud t\\
&\qquad -p|\Delta^\e_t|^{p-2}\<\Delta^\e_t,b(Y^\e_t(x)) -b(X^0_t(x)) - Db(X^0_t(x))\e \yY^x_t)\>\ud t\\
&\lqq 
-\delta p|\Delta^\e_t|^{p}+p|\Delta^\e_t|^{p-1}
|b(Y^\e_t(x)) -b(X^0_t(x)) - Db(X^0_t(x))\e \yY^x_t)|\ud t.
\end{align*}
On the event $\mathcal{A}^\e_t$, Taylor's theorem applied to $b$ implies
\begin{align*}
\ud\, |\Delta^\e_t|^{p}\lqq 
-\delta p |\Delta^\e_t|^{p}\ud t + pC(|x|)|\Delta^\e_t|^{p-1}\e^{2-2\theta}\ud t.
\end{align*}
Taking expectation, the integral monotonicity, Fubini's theorem and \eqref{eq:p1mo} yield
\begin{align*}
\ud\,\EE[|\Delta^\e_t|^{p}\ind(\mathcal{A}^\e_t)]&\lqq 
-\delta p \EE[|\Delta^\e_t|^{p}\ind(\mathcal{A}^\e_t)]\ud t+
 pC(|x|)\EE[|\Delta^\e_t|^{p-1}\ind(\mathcal{A}^\e_t)]\e^{2-2\theta}\ud t\\
&\lqq 
pC(|x|)\EE[|\Delta^\e_t|^{p-1}]\e^{2-2\theta}\ud t
\\
&\lqq
pC(|x|) \ti R_{\eta}(t) \e^{p+1-\eta'-2\theta}\ud t.
\end{align*}
Bearing in mind  $|\Delta^\e_0|^{p}=0$, we have
\[
\EE[|\Delta^\e_t|^{p}\ind(\mathcal{A}^\e_t)]\lqq pC(|x|)  \e^{p+1-\eta'-2\theta}\int_{0}^{t}\ti R_\eta(s)\ud s.
\]
Therefore
\begin{equation}\label{ec:est1}
\left(\EE[|\Delta^\e_t|^{p}\ind(\mathcal{A}^\e_t)]\right)^{1/p}\lqq \left(p C(|x|)  \int_{0}^{t}\ti R_\eta(s)\ud s\right)^{1/p} \e^{1+\frac{1-\eta'-2\theta}{p}}.
\end{equation}
We continue with the estimate on the complement of $\mathcal{A}^\e_t$. We show
\[
\EE[|\Delta^\e_t|^{p}\ind((\mathcal{A}^\e_t)^{\cc})]\lqq
\e^{p-\eta'}\rR(t)\cdot \PP\big((\mathcal{A}^\e_t)^{\cc}\big)^\frac{p_*-p}{p_*},
\]
where $\rR(t)$ is a function of at most polynomial order. Indeed,
by  H\"older's inequality and the inequalities \eqref{eq:potencias}, \eqref{eq:ec1} and \eqref{eq:ec2}  we have
\begin{align*}
\EE[|\Delta^\e_t|^{p}\ind((\mathcal{A}^\e_t)^{\cc})]&
\lqq \EE[|\Delta^\e_t|^{p_*}]^{\frac{p}{p_*}}\cdot
\PP\big((\mathcal{A}^\e_t)^{\cc}\big)^\frac{p_*-p}{p_*}\\
&\lqq \Big(
C_{p_*}\e^{p_*-\eta}R_\eta(t)+C_{p_*}\e^{p_*}\ti C_{p_*}S_{p_*}(t)
\Big)^{\frac{p}{p_*}}
\cdot \PP\big((\mathcal{A}^\e_t)^{\cc}\big)^\frac{p_*-p}{p_*}\\
&\lqq \Big(\left(
C_{p_*}\e^{p_*-\eta}R_\eta(t)\right)^{\frac{p}{p_*}}
+\left(C_{p_*}\e^{p_*}\ti C_{p_*}S_{p_*}(t)
\right)^{\frac{p}{p_*}}\Big)
\cdot \PP\big((\mathcal{A}^\e_t)^{\cc}\big)^\frac{p_*-p}{p_*}\\
&= \Big(\left(
C_{p_*}R_\eta(t)\right)^{\frac{p}{p_*}}\e^{(p_*-\eta) \frac{p}{p_*}}
+\left(C_{p_*}\ti C_{p_*}S_{p_*}(t)
\right)^{\frac{p}{p_*}}\e^{p}\Big)
\cdot \PP\big((\mathcal{A}^\e_t)^{\cc}\big)^\frac{p_*-p}{p_*}\\
&\lqq \e^{p-\eta \frac{p}{p_*}}\rR(t)\cdot \PP\big((\mathcal{A}^\e_t)^{\cc}\big)^\frac{p_*-p}{p_*},
\end{align*}
where $ \mathcal{R}(t):=\max\{\big(
C_{p_*}R_\eta(t)\big)^{\frac{p}{p_*}},\big(C_{p_*}\ti C_{p_*}S_{p_*}(t)
\big)^{\frac{p}{p_*}}\}$.
As a consequence,
\begin{align}\label{ec:est2}
\left(\EE[|\Delta^\e_t|^{p}\ind((\mathcal{A}^\e_t)^{\cc})]
\right)^{1/p}
\lqq \e^{1-\frac{\eta}{p_*}} 
(\rR(t))^{\frac{1}{p}}
\cdot
\PP\left((\mathcal{A}^\e_t)^{\cc}
\right)^\frac{p_*-p}{p_*p}.
\end{align}
Combining estimates \eqref{ec:est1}, \eqref{ec:est2} in decomposition \eqref{ec:pivote33} 
we obtain a positive constant $C:=C(p_*,p,\delta, |x|, |D^2F|)$ such that
for any $t\gqq 0$
\begin{align}\label{ec:desigualdad}
\quad \Wp(X^{\e}_t(x),Y^{\e}_t(x))
&\lqq 
\left(\EE[|\Delta^\e_t|^{p}]\right)^{\nicefrac{1}{p}}\\
&\lqq 
\left(pC(|x|)  \int_{0}^{t}\ti R_\eta(s)\ud s\right)^{1/p} \e^{1+\frac{1-\eta'-2\theta}{p}}+\e^{1-\frac{\eta}{p_*}} 
(\rR(t))^{\frac{1}{p}}
\cdot
\PP\left((\mathcal{A}^\e_t)^{\cc}\right)^\frac{p_*-p}{p_*p}.\nonumber
\end{align}
By Lemma~\ref{lem: Siorpaes} there exists 
a positive constant $C$ such that for all $\gamma\in (0, 1)$ for the choice $\vartheta = \e^{-\theta/\gamma}$ and any $t\gqq 0$ 
it follows 
\begin{equation}\label{eq:siorpaesnew9}
\PP((\mathcal{A}^\e_t)^{\cc})\lqq Ct \e^{\theta}.
\end{equation}
We further restrict $\theta$ such that additionally
$0<\theta<\min\{\frac{2\eta p}{p_*-p},\nicefrac{1}{4}\}$.
Hence, with the help of inequality \eqref{ec:desigualdad} and \eqref{eq:siorpaesnew9} we have
\begin{align*}
&\Wp(X^{\e}_t(x),Y^{\e}_t(x))\lqq 
\rR_1(t)\e^{1+\frac{1}{4p}}+\rR_2(t)\e^{1+\frac{\eta}{p_*}},
\end{align*}
where $\rR_1$ and $\rR_2$ are functions of at most polynomial order.
Consequently we obtain the desired limit
\[
\lim_{\e\ra 0}\frac{\Wp(X^\e_{\ft^x_\e + r \cdot w_\e}(x),Y^\e_{\ft^x_\e + r \cdot w_\e}(x))}{\e}=0.
\]
\textbf{We continue with the case $p_*>0$ and $p\in (0,1\wedge p_*]$.} 
Let $\theta \in (0,\nicefrac{1}{4})$ and recall the event
$\mathcal{A}^\e_t$ in \eqref{eq:evento}.
For $p\in (0,1\wedge p_*]$ we split
\begin{align*}
\EE[|\Delta_t^\e|^{p}]=
\EE[|\Delta_t^\e|^{p}\ind(\mathcal{A}^\e_t)]+\EE[|\Delta_t^\e|^{p}\ind((\mathcal{A}^\e_t)^{\cc})]=:J_1+J_2.
\end{align*}
We start with the term $J_1$. Since $|\cdot|^p$ is not differentiable, we apply 
the chain rule for the smooth approximation $|x|^{p}_c=(\sqrt{|x|^2+c^2})^{p}$. 
Hypothesis~\ref{hyp:dissipa9} then yields
\begin{align*}
\ud\,|\Delta^\e_t|^{p}_{c}&=
-p|\Delta^\e_t|^{p-2}_c\<\Delta^\e_t,b(X^\e_t(x))-b(Y^\e_t(x))\>\ud t\\
&\qquad +p|\Delta^\e_t|^{p-2}_c\<\Delta^\e_t,b(Y^\e_t(x)) -b(X^0_t(x)) - Db(X^0_t(x))  \e\yY^x_t
\>\ud t\\
&\lqq -p\delta|\Delta^\e_t|^{p-2}_c|\Delta^\e_t|^2 \ud t
+p|\Delta^\e_t|^{p-1}_c |b(Y^\e_t(x)) -b(X^0_t(x)) - Db(X^0_t(x))  \e\yY^x_t|\ud t\\
&\lqq -p\delta|\Delta^\e_t|^{p}_c \ud t+p\delta c^{p}\ud t
+pc^{p-1} |b(Y^\e_t(x)) -b(X^0_t(x)) - Db(X^0_t(x))  \e\yY^x_t|\ud t.
\end{align*}  
Due to  $|X^0_t(x)|\lqq e^{-\delta t}|x|$ for all $t\gqq 0$ and $x\in \RR^d$,
 Taylor's expansion for $b$ 
on the event $\mathcal{A}^\e_t$ 
 implies
\begin{align*}
\ud\,|\Delta^\e_t|^{p}_{c}\lqq -p\delta|\Delta^\e_t|^{p}_c \ud t+p\delta c^{p}\ud t
+pc^{p-1}C(|x|)\e^{2(1-\theta)},
\end{align*}
where $C(|x|)=\max\limits_{|u|\lqq |x|+1}|D^2b(u)|$.
Hence
\begin{align*}
\ud\, \EE[|\Delta^\e_t|^{p}_{c}\ind(\mathcal{A}^\e_t)]\lqq -p\delta\EE[|\Delta^\e_t|^{p}_c\ind(\mathcal{A}^\e_t)]\ud t+p\delta c^{p}\ud t
+pc^{p-1} C(|x|) \e^{2(1-\theta)}\ud t.
\end{align*}
The integral version of the Gr\"onwall inequality with negative linearity given 
in Lemma~1 in \cite{Mikami} implies for all $t\gqq 0$
\begin{align}\label{eq:cpivotal}
\EE[|\Delta^\e_t|^{p}\ind(\mathcal{A}^\e_t)]\lqq 
\EE[|\Delta^\e_t|^{p}_{c}\ind(\mathcal{A}^\e_t)]\lqq 
c^{p}
+\frac{1}{\delta}c^{p-1} C(|x|)\e^{2(1-\theta)}.
\end{align}
For $p\neq 1$ we have the following.
Since $c>0$ is arbitrary and $\theta\in (0,\nicefrac{1}{4})$,
the choice $c=\e^{1+\nicefrac{\eta}{p}}$ with $\eta\in (0,\frac{p}{2(1-p)})$ in \eqref{eq:cpivotal} 
yields for any $r\in \RR$
\begin{equation}\label{eq:limJ2}
\lim_{\e\ra 0}
\frac{1}{\e^{p}}\EE[|\Delta^\e_{\ft^x_\e + r \cdot w_\e}|^{p}\ind(\mathcal{A}^{\e}_{\ft^x_\e + r \cdot w_\e})]=0.
\end{equation}
The case of $p=1$ follows by the  choice  $c=\e^{2}$ in \eqref{eq:cpivotal}.

We continue with the term $J_2$.
By the subadditivity of the power $p\lqq 1$ and the H\"older inequality 
for  the index $p'/p$ where 
$p'\in (p,p_*)$ and $r$ is such that  $p/p'+1/r=1$ we have
\begin{align}
\EE[|\Delta^\e_t|^{p}\ind((\mathcal{A}^{\e}_t)^{\cc})]&\lqq 
\EE[|X^\e_t(x)|^{p}\ind((\mathcal{A}^{\e}_t)^{\cc})]+\EE[|Y^\e_t(x)|^{p}\ind((\mathcal{A}^{\e}_t)^{\cc})]\nonumber\\
&
\lqq 
(\EE[|X^\e_t(x)|^{p'}])^{p/p'}(\PP((\mathcal{A}^{\e}_t)^{\cc}))^{1/r}+(\EE[|Y^\e_t(x)|^{p'}])^{p/p'}(\PP((\mathcal{A}^{\e}_t)^{\cc}))^{1/r}.\label{eq:holder998}
\end{align} 
By Lemma~\ref{lem:p<2}
we obtain for all $t\gqq 0$
\begin{align}\label{eq:ordenXe}
\qquad (\EE[|X^\e_t(x)|^{p'}])^{\nicefrac{p}{p'}}&\lqq 
(
\EE[|X^\e_t(x)-X^0_t(x)|^{p'}]+
|X^0_t(x)|^{p'}))^{p/{p'}}\lqq \e^{p}(1+C_{p'}\cdot t)^{\nicefrac{p}{p'}}+
|X^0_t(x)|^{p}.
\end{align}
Note that for all $t\gqq 0$ it follows
\begin{equation}\label{eq:ordenYe}
(\EE[|Y^\e_t(x)|^{p'}])^{\nicefrac{p}{p'}}\lqq 
 \e^{p}(\EE[|\yY^x_t|^{p'}])^{\nicefrac{p}{p'}}+|X^0_t(x)|^{p}.
\end{equation}
Lemma~A.1 in \cite{BHPTV} yields the existence of a positive constant $C(r,|x|)$ such that 
\begin{equation}\label{eq:ordenepsilon}
|X^0_{\ft^x_\e + r \cdot w_\e}(x)|\lqq C(r,|x|)\e.
\end{equation}
Combining \eqref{eq:holder998} with inequalities
\eqref{eq:siorpaesnew9}, \eqref{eq:ordenXe}, \eqref{eq:ordenYe} and \eqref{eq:ordenepsilon}
gives
\begin{align*}
\EE[|\Delta^\e_{\ft^x_\e + r \cdot w_\e}|^{p}\ind((\mathcal{A}^{\e}_{\ft^x_\e + r \cdot w_\e})^{\cc})]
&\lqq (C(\ft^x_\e + r \cdot w_\e) \e^{\theta})^{1/r}C^{p}(r,|x|)\e^{p}\\
&\quad  +
(C(\ft^x_\e + r \cdot w_\e) \e^{\theta})^{1/r}
\big(\e^{p}(1+C_{p'}\cdot (\ft^x_\e + r \cdot w_\e))^{\nicefrac{p}{p'}}\big)\\
&\quad +(C(\ft^x_\e + r \cdot w_\e) \e^{\theta})^{1/r}\big(
\e^{p}(\EE[|\yY^x_{\ft^x_\e + r \cdot w_\e}|^{p'}])^{\nicefrac{p}{p'}}
\big).
\end{align*} 
Since $\EE[|\yY^x_{t}|^{p'}]\lqq \rR(t)$, where $\rR $ is a function of at most polynomial order, we have
\begin{align*}
&\limsup\limits_{\e \ra 0}
\frac{1}{\e^{p'}}\EE[|\Delta^\e_{\ft^x_\e + r \cdot w_\e}|^{p'}\ind((\mathcal{A}^{\e}_{\ft^x_\e + r \cdot w_\e})^{\cc})]
\\
&\quad \lqq \limsup\limits_{\e \ra 0}
\e^{\theta/r}
(C(\ft^x_\e + r \cdot w_\e))^{1/r}\big(C^p(r,|x|)+(1+C_{p'}\cdot (\ft^x_\e + r \cdot w_\e))^{\nicefrac{p}{p'}}
+\mathcal{R}(\ft^x_\e + r \cdot w_\e)
\big).
\end{align*}
The right-hand side of the preceding inequality equals zero.
The preceding argument combined with \eqref{eq:limJ2} yields the desired limit \eqref{eq:WpXY}.
\end{proof}
\subsection{\textbf{Asymptotic first order approximation}}
\begin{lem}\label{cl:limit}
For any $p\in (0,p_*)$ we have 
\[
\lim_{\e\ra 0}\frac{\Wp(\mu^\e_*,\mu^\e)}{\e^{1\wedge p}}=0.
\]
\end{lem}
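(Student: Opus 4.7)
The strategy is to introduce a diverging time $t_\e = A|\ln(\e)|$ with $A>1/\delta$ (to be fixed below) and split via the triangle inequality at the convenient initial state $x=0$, for which $b(0)=0$ forces $X^0_t(0) = 0$ for all $t\gqq 0$:
\begin{equation*}
\Wp(\mu^\e_*, \mu^\e) \lqq \Wp(\mu^\e_*, Y^\e_{t_\e}(0)) + \Wp(Y^\e_{t_\e}(0), X^\e_{t_\e}(0)) + \Wp(X^\e_{t_\e}(0), \mu^\e).
\end{equation*}
Since the left-hand side does not depend on the initial point, it suffices to show that each of the three summands on the right, divided by $\e^{1\wedge p}$, vanishes as $\e\ra 0$.

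The middle term is directly covered by \eqref{prop:aproxlineal} since $t_\e = O(|\ln(\e)|)$ and $0<p<p_*$. For the third term, the exponential ergodicity proved in Appendix~\ref{AP1:existencia} carries over verbatim from $\W_{p_*}$ to $\Wp$ with $p\in(0,p_*)$ (all that is used is the synchronous coupling \eqref{eq:x-y}, which only requires Hypothesis~\ref{hyp:dissipa9} and the existence of a finite $(1\wedge p)$-moment of $\mu^\e$), giving
\begin{equation*}
\Wp(X^\e_{t_\e}(0), \mu^\e) \lqq e^{-(1\wedge p)\delta t_\e}\int_{\RR^d}|y|^{1\wedge p}\mu^{\e}(\ud y).
\end{equation*}
Started from $x=0$, the moment estimate \eqref{eq:momentos} with $\gamma = 1\wedge p$ reduces to $\EE[|X^\e_t(0)|^{1\wedge p}]\lqq C\e^{1\wedge p}$ uniformly in $t\gqq 0$, and Fatou's lemma along the ergodic convergence $X^\e_t(0)\lra \mu^\e$ yields $\int_{\RR^d}|y|^{1\wedge p}\mu^{\e}(\ud y)\lqq C\e^{1\wedge p}$. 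Hence the ratio is at most $C\e^{(1\wedge p)(\delta A-1)}$, which vanishes as soon as $A>1/\delta$.

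For the first term, observe that $Y^\e_t(0) = X^0_t(0) + \e\,\yY^0_t = \e\,\yY^0_t$, and that $(\yY^0_t)_{t\gqq 0}$ is exactly the homogeneous Ornstein-Uhlenbeck flow \eqref{e:OU} started at $0$, so $\yY^0_t\ra \oO_\infty$ in $\Wp$ as $t\ra\infty$ by Lemma~\ref{lem:inhomOU} (cited after \eqref{eq:NLprofileApprox}). Since $\mu^\e_*\stackrel{d}{=}\e\,\oO_\infty$, the homogeneity property Lemma~\ref{lem:properties}c) gives
\begin{equation*}
\frac{\Wp(\mu^\e_*, Y^\e_{t_\e}(0))}{\e^{1\wedge p}} = \Wp(\oO_\infty, \yY^0_{t_\e}) \lra 0 \qquad\textrm{as } \e\ra 0,
\end{equation*}
because $t_\e\ra\infty$. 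Combining the three limits yields the claim. The only delicate step is the correct $\e^{1\wedge p}$-scaling of the $(1\wedge p)$-th moment of $\mu^\e$ in the third term: a merely uniform-in-$\e$ bound would only produce a bounded, rather than vanishing, contribution. The correct scaling is obtained by specializing \eqref{eq:momentos} to the starting point $x=0$, where the residual constant $C_3$ is of order $\e^\gamma$ precisely thanks to the additive structure of the noise.
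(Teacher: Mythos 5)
Your decomposition is essentially the one the paper uses: after noting $X^0_t(0)=0$ and hence $Y^\e_t(0)=\e\,\yY^0_t$ with $\yY^0=\zZ(0)$, the split $\Wp(\mu^\e_*,\mu^\e)\lqq \Wp(\mu^\e_*,Y^\e_{t_\e}(0))+\Wp(Y^\e_{t_\e}(0),X^\e_{t_\e}(0))+\Wp(X^\e_{t_\e}(0),\mu^\e)$ is the same triangle inequality the paper writes as \eqref{eq:ine0} (using the invariance $X^\e_t(\mu^\e)=\mu^\e$ and $\zZ^\e_t(\mu^\e_*)=\mu^\e_*$), and the middle term is handled identically via Proposition~\ref{cl:linearization} at $x=0$. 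Your treatment of the first term via the homogeneity property and the $\Wp$-ergodicity $\Wp(\zZ_t(0),\oO_\infty)\ra 0$ is a mild repackaging of what the paper does for $\Wp(\zZ^\e_t(0),\zZ^\e_t(\mu^\e_*))$ by disintegration and the synchronous coupling; both hinge on the same linear OU ergodicity.

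One point deserves care. You invoke \eqref{eq:momentos} with $\gamma=1\wedge p$ to get $\int|y|^{1\wedge p}\mu^\e(\ud y)\lqq C\e^{1\wedge p}$. As stated, \eqref{eq:momentos} is only claimed for $\gamma\in(0,1\wedge p_*)$; when $p\gqq 1$ (so $p_*>1$ and $1\wedge p=1=1\wedge p_*$), the endpoint $\gamma=1$ is excluded. The coupling bound on $\Wp(X^\e_{t_\e}(0),\mu^\e)$ for $p\gqq 1$ genuinely requires control of the first moment of $\mu^\e$ at scale $\e$, not merely of a $\gamma$-moment for $\gamma<1$. The paper sidesteps this by citing the scaling estimate (2.84) of \cite{BHPTV} directly, which supplies $\int|u|^{1\wedge p}\mu^\e(\ud u)\lqq C\e^{1\wedge p}$ for the full needed range. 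If you prefer the self-contained route via \eqref{eq:momentos}, you should check that the underlying inequality (D.3) in \cite{BHPTV} in fact holds up to $\gamma<p_*$ rather than only $\gamma<1\wedge p_*$; otherwise replace the Fatou step by the cited (2.84). With that reference in place, your argument closes and matches the paper's.
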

\begin{proof}
First we observe that $Y^\e_t(0) = \zZ^\e_t(0)$ for any $t\gqq 0$, $\e>0$, 
where $(\zZ_t^\e(0))_{t\gqq 0}$ is given in \eqref{eq:OUx}.
In abuse of notation, we write $(X^\e_t(\mu^\e))_{t\gqq 0}$ (and analogously respectively $(\zZ^\e_t(\mu^\e_*))_{t\gqq 0}$) for the process starting at the random vector with distribution $\mu^\e$  independent of the noise process $L$.
Since $X^{\e}_t(\mu^\e)=\mu^\e$ and 
$\zZ^{\e}_t(\mu^\e_*)=\mu^\e_*$ for any $t\gqq 0$, the triangle inequality yields
\begin{align}\label{eq:ine0}
\quad \Wp(\mu^\e,\mu^\e_*)&=\Wp(X^\e_t(\mu^\e),\zZ^\e_t(\mu^\e_*))\lqq 
\Wp(X^\e_t(\mu^\e),X^\e_t(0))+
\Wp(X^\e_t(0),\zZ^\e_t(0))+
\Wp(\zZ^\e_t(0),\zZ^\e_t(\mu^\e_*)).
\end{align}
By Proposition~\ref{cl:linearization} for $x= 0$, we have
\begin{equation}\label{eq:term3}
\lim_{\e\ra 0}\frac{\Wp(X^\e_{\ft^x_\e + r \cdot w_\e}(0),\zZ^\e_{\ft^x_\e + r \cdot w_\e}(0))}{\e^{1\wedge p}}=0.
\end{equation}
By disintegration,  inequalities \eqref{eq:x-y} and
(2.84) in \cite{BHPTV}
imply
\begin{align*}
\Wp(X^\e_t(\mu^\e),X^\e_t(0))
&\lqq \int_{\RR^d}
\Wp(X^\e_t(u),X^\e_t(0))\mu^{\e}(\ud u)\lqq e^{-\delta(1\wedge p) t}\int_{\RR^d}
|u|^{1\wedge p}\mu^{\e}(\ud u)\lqq C e^{-\delta(1\wedge p) t}\e^{1\wedge p}
\end{align*} 
for some positive constant $C$.
As a consequence, 
\begin{equation}\label{eq:term1}
\lim\limits_{\e\ra 0}\frac{\Wp(X^\e_{\ft^x_\e+r\cdot w_\e}(\mu^\e),X^\e_{\ft^x_\e+r\cdot w_\e}(0))}{\e^{1\wedge p}}=0.
\end{equation}
Analogously, 
\begin{equation}\label{eq:term2}
\lim\limits_{\e\ra 0}\frac{\Wp(\zZ^\e_{\ft^x_\e+r\cdot w_\e}(\mu^\e_*),\zZ^\e_{\ft^x_\e+r\cdot w_\e}(0))}{\e^{1\wedge p}}=0.
\end{equation}
Combining \eqref{eq:ine0} with the estimates \eqref{eq:term3},  
\eqref{eq:term1} and \eqref{eq:term2} completes the proof.  
\end{proof}
\begin{lem}\label{lem:inhomOU}
For any $p\in (0,p_*)$ we have 
\begin{equation}\label{eq: inhomOUthermState}
\lim\limits_{t\ra \infty}\Wp(\yY^x_t, \oO_\infty)=0.
\end{equation}
\end{lem}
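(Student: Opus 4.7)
The plan is a synchronous coupling: I would drive both $\yY^x_t$ and a stationary copy of $\oO_t$ by the same L\'evy process $L$, and split with the triangle inequality
\[
\Wp(\yY^x_t,\oO_\infty)\lqq \Wp(\yY^x_t,\zZ_t(0)) + \Wp(\zZ_t(0),\oO_\infty).
\]
The second term vanishes as $t\to\infty$ by the exponential ergodicity of the linear Ornstein-Uhlenbeck process $\zZ_\cdot(0)$ from~\eqref{eq:OUx}: the argument of Subsection~\ref{ss:W-ergodicity} applies verbatim to the linear drift $b(x)=Db(0)x$, which is itself dissipative with the same constant $\delta$, and extends to any $\Wp$ for $p<p_*$. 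The remaining task is to prove $\Wp(\yY^x_t,\zZ_t(0))\to 0$.

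For the first term I exploit the synchronous coupling. Set $R_t:=\yY^x_t-\zZ_t(0)$; the common jump part cancels and $R_t$ solves the pathwise ODE
\[
\dot R_t=-Db(X^0_t(x))R_t+[Db(0)-Db(X^0_t(x))]\zZ_t(0),\qquad R_0=0.
\]
Differentiating Hypothesis~\ref{hyp:dissipa9} gives $\< Db(z)u,u\>\gqq \delta|u|^2$ for every $z,u\in\RR^d$, so the fundamental matrix $\Psi(t,s)$ of the homogeneous part contracts as $|\Psi(t,s)u|\lqq e^{-\delta(t-s)}|u|$. Combined with the exponential decay $|X^0_s(x)|\lqq |x|e^{-\delta s}$ and the local Lipschitz continuity $|Db(y)-Db(0)|\lqq L|y|$ on the bounded forward orbit of $X^0$ (available since $b\in\cC^2$), Duhamel's formula produces the pathwise bound
\[
|R_t|\lqq L|x|\,e^{-\delta t}\int_0^t |\zZ_s(0)|\,\ud s.
\]

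It remains to control $\EE[|R_t|^p]$, after which the domination property Lemma~\ref{lem:properties}(e) concludes that $\Wp(\yY^x_t,\zZ_t(0))\lqq (\EE[|R_t|^p])^{1\wedge(1/p)}\to 0$. For $p\in[1,p_*)$, Jensen's inequality combined with the uniform bound $\sup_{s\gqq 0}\EE[|\zZ_s(0)|^p]<\infty$ (again by the linear ergodicity) yields $\EE[|R_t|^p]\lqq K t^p e^{-\delta pt}\to 0$. For $p\in(0,1)$ I use the crude bound $\int_0^t|\zZ_s(0)|\ud s\lqq t\sup_{s\lqq t}|\zZ_s(0)|$ together with Lemma~\ref{lem: Siorpaes} applied at $x=0$: a layer-cake computation based on $\PP(\sup_{s\lqq t}|\zZ_s(0)|>\vartheta)\lqq Ct\vartheta^{-\gamma}$ for some $\gamma\in(p,p_*\wedge 1)$ produces $\EE[\sup_{s\lqq t}|\zZ_s(0)|^p]\lqq K t^{p/\gamma}$, and hence $\EE[|R_t|^p]\lqq K t^{p(1+1/\gamma)}e^{-\delta pt}\to 0$. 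The hard part is precisely the range $p_*<1$, where $\zZ_s(0)$ lacks a first moment and the standard Gr\"onwall-after-Jensen route collapses; here the maximal estimate of Lemma~\ref{lem: Siorpaes} is essential, since its polynomial-in-$t$ conclusion is weak enough to be available yet strong enough to be absorbed by the exponential factor $e^{-\delta pt}$.
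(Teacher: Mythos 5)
Your proof is correct. The outer structure matches the paper's --- the same triangle inequality through $\zZ_t(0)$, the same $\W$-ergodicity argument for the second term, and the same reliance on Lemma~\ref{lem: Siorpaes} in the regime $p<1$ --- but the mechanism for estimating $\Wp(\yY^x_t,\zZ_t(0))$ is genuinely different. The paper differentiates $|\yY^x_t-\zZ_t(0)|^p$ along the ODE (using the $\cC^2$-approximation $|\cdot|^p_c$ when $p\lqq 1$), applies Young's inequality, and closes with the Gr\"onwall lemma with negative linearity from \cite{Mikami}, additionally invoking the polynomial growth $\EE[|\yY^x_t|^p]\lqq P_2(t)$; for $p\lqq 1$ it splits on the Siorpaes event $\dD^0_t$ and handles the complement by H\"older. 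You instead observe that under synchronous coupling $R_t=\yY^x_t-\zZ_t(0)$ solves a genuine pathwise ODE, so variation of constants with the contraction $|\Psi(t,s)u|\lqq e^{-\delta(t-s)}|u|$ gives the closed-form bound $|R_t|\lqq L|x|\,e^{-\delta t}\int_0^t|\zZ_s(0)|\ud s$ involving only $\zZ$. Moment-taking then reduces to Jensen for $p\gqq 1$ and to a layer-cake integral of the Siorpaes tail for $p<1$, with no need for the $|\cdot|^p_c$ smoothing or a Gr\"onwall argument. Both approaches ultimately hinge on the same fact, that $e^{-\delta p t}$ absorbs any polynomial-in-$t$ growth of the moments of $\zZ_\cdot(0)$; yours packages the contraction once and for all at the level of the fundamental solution, which reads somewhat more cleanly for this particular lemma.

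Two minor points worth making explicit if you write this up: the uniform bound $\sup_{s\gqq 0}\EE[|\zZ_s(0)|^p]<\infty$ for $p\in[1,p_*)$ should be justified (e.g.\ via $\zZ_t(0)\to\oO_\infty$ in $\Wp$ together with Lemma~\ref{lem:properties}(f), or directly from the dissipativity of $Db(0)$); and in the layer-cake step you need to split the $\vartheta$-integral at a $t$-dependent threshold (say $\vartheta=(Ct)^{1/\gamma}$) rather than at $1$ to obtain the stated $t^{p/\gamma}$ rate, since the Siorpaes tail bound is only useful for $\vartheta\gqq 1$ and trivial below.
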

\begin{proof} 
Recall that $\oO_\infty$ is the limiting and invariant distribution of 
the homogeneous Ornstein-Uhlenbeck process $(\zZ(x)_t)_{t\gqq 0}$ defined in \eqref{eq:OUx}. 
That is $\oO_\infty \stackrel{d}{=} \zZ_\infty$. 
Since $-Db(X^0_t(x))$ converges exponentially fast to  $-Db(0)$, it is natural 
to expect that the flow of $(\yY^x_t)_{t\gqq 0}$ behaves as the flow of 
$(\zZ_t(x))_{t\gqq 0}$ for large $t$. 
In \cite{BHPTV}, Lemma~C.3, it is shown that $\yY^x_t \ra  \oO_\infty$ as $t \ra  \infty$ in law. 
However, the law $\oO_\infty$ is not invariant under the random dynamics of $(\yY^x_t)_{t\gqq 0}$ 
due to the time inhomogeneity. 
Analogously as in  
\eqref{eq:converlimiteW} we deduce
\begin{equation}\label{eq: OUthermalizationW2}
\Wp(\zZ_t(x), \oO_\infty) \ra 0, \quad \mbox{ as } t\ra  \infty. 
\end{equation}
We start with the proof of the statement. The triangle inequality yields
\begin{equation}\label{eq: triangulo}
\Wp(\yY^x_t,\oO_\infty)\lqq 
\Wp(\yY^x_t,\zZ_t(0))+
\Wp(\zZ_t(0),\oO_\infty),
\end{equation}
where the second term on the right-hand side 
tends to $0$ as $t\ra  \infty$ due to \eqref{eq: OUthermalizationW2}.
Thus it remains to prove  $
\Wp(\yY^x_t,\zZ_t(0)) \ra  0$,  as  $t \ra  \infty$.  Since 
\begin{equation*}
\Wp(\yY^x_t,\zZ_t(0))\lqq (\EE[|\yY^x_t-\zZ_t(0)|^{p}])^{1\wedge (1/p)},
\end{equation*}
we derive the respective $L^{p}$ estimates.
By \eqref{eq: Yxt9} and \eqref{eq:OUx} we obtain
\[
\ud\,(\yY^x_t-\zZ_t(0))=-Db(X^0_t(x))(\yY^x_t-\zZ_t(0))\ud t+
(Db(0)-Db(X^0_t(x)))\zZ_t(0)\ud t.
\]
We first consider the case $p_*>1$ and $p\in (1,p_*)$. The chain rule and Hypothesis~\ref{hyp:dissipa9} yield
\begin{align*}
\ud  |\yY^x_t-\zZ_t(0)|^{p}&=-p|\yY^x_t-\zZ_t(0)|^{p-2}\<\yY^x_t-\zZ_t(0),Db(X^0_t(x))(\yY^x_t-\zZ_t(0))\>\ud t\\
&\qquad +p|\yY^x_t-\zZ_t(0)|^{p-2}\<\yY^x_t-\zZ_t,(Db(0)-Db(X^0_t(x)))\zZ_t(0)\>\ud t\\
&\lqq -p\delta |\yY^x_t-\zZ_t(0)|^{p}\ud t+ p|\yY^x_t-\zZ_t(0)|^{p-1}|Db(0)-Db(X^0_t(x))||\zZ_t(0)|\ud t\\
&\lqq -p\delta |\yY^x_t-\zZ_t(0)|^{p}\ud t+ p|\yY^x_t-\zZ_t(0)|^{p-1}C(|x|)|X^0_t(x)||\zZ_t(0)|\ud t,
\end{align*}
where $C(|x|)=\max\limits_{|u|\lqq |x|+1}|D^2b(u)|$. 
Taking expectation, using the monotonicity of the integrals and Fubini's theorem imply
\begin{align*}
\ud\, &\EE[|\yY^x_t-\zZ_t(0)|^{p}]\lqq -p\delta \EE[|\yY^x_t-\zZ_t(0)|^{p}]\ud t+ pC(|x|)|X^0_t(x)|\EE[|\yY^x_t-\zZ_t(0)|^{p-1}\cdot|\zZ_t(0)|]\ud t.
\end{align*}
By Young's inequality and $|X^0_t(x)|\lqq e^{-\delta t}|x|$ for any $t\gqq 0$ and $x\in \RR^d$ it follows
\begin{align*}
\ud\, \EE[|\yY^x_t-\zZ_t(0)|^{p}]&\lqq -p\delta \EE[|\yY^x_t-\zZ_t(0)|^{p}]\ud t 
+ pC(|x|)|x| e^{-\delta t}\left(\EE[|\yY^x_t-\zZ_t(0)|^{p}]\ud t +
\EE[|\zZ_t(0)|^{p}]\right)\ud t.
\end{align*}
A straightforward calculation yields (for any  $p>0$) that there exist functions $P_1(t)$ and $P_2(t)$ of polynomial order 
(depending of $p$, $\delta$, $|x|$) such that 
\begin{equation}\label{eq:polygrows}
\EE[|\zZ_t(0)|^{p}] \lqq P_1(t)\quad \textrm{ and }
\quad \EE[|\yY^x_t|^{p}]\lqq P_2(t)\quad \textrm{ for any } t\gqq 0.
\end{equation}
Therefore,
\begin{align*}
\ud\, \EE[|\yY^x_t-\zZ_t(0)|^{p}]&\lqq -p\delta \EE[|\yY^x_t-\zZ_t(0)|^{p}]\ud t+ p2^{p}C(|x|)|x|e^{-\delta t}(P_1(t) + P_2(t))\ud t.
\end{align*}
The integral version of the Gr\"onwall inequality with negative linearity given 
in Lemma~1 in \cite{Mikami} yields 
\begin{align*}
\EE[|\yY^x_t-\zZ_t(0)|^{p}]
&\lqq p2^{p}C(|x|)|x|
e^{-p\delta t}\int_{0}^{t}
e^{p\delta s}e^{-\delta s} (P_1(s) + P_2(s))\ud s\\
&\lqq \frac{p2^{p}C(|x|)|x|}{\delta(p-1)} \max\limits_{0\lqq s\lqq t}\{P_1(s),P_2(s)\}e^{-\delta t}.
\end{align*}
Therefore, 
\begin{equation}\label{eq: nonhomOUtherm}
\lim\limits_{t\ra \infty}\Wp(\yY^x_t, \zZ_t(0)) \lqq \lim\limits_{t\ra \infty}\EE[|\yY^x_t-\zZ_t(0)|^{p}]=0.
\end{equation}
Combining \eqref{eq: OUthermalizationW2} and \eqref{eq: nonhomOUtherm} in \eqref{eq: triangulo} we conclude \eqref{eq: inhomOUthermState}. 

We continue with the case $p \in (0, p_*\wedge 1]$. Note that the case $p_*>1$ and $p\in (0,1]$ is also covered in the sequel. By Lemma~\ref{lem: Siorpaes} there exists 
a positive constant $C$ such that for the choice $\gamma=p$, $\vartheta = e^{\frac{\delta}{2} t}$ and any $t\gqq 0$ 
it follows 
\begin{equation}\label{eq:siorpaesnueva}
\PP(\dD_t^0)\lqq Ct e^{-\frac{\delta p}{2} t}, \qquad \mbox{ where we recall }\quad \dD_t^0=\Big\{\sup\limits_{0\lqq s\lqq t}|\zZ_s(0)|> \vartheta\Big\}.
\end{equation}
We split
\begin{align*}
\EE[|\yY^x_t-\zZ_t(0)|^{p}]=\EE[|\yY^x_t-\zZ_t(0)|^{p}\ind((\dD^0_t)^{\cc})]+
\EE[|\yY^x_t-\zZ_t(0)|^{p}\ind(\dD^0_t)]
=:I_1+I_2.
\end{align*}
We start with the term $I_1$.
The chain rule for $|x|^{p}_c=(\sqrt{|x|^2+c^2})^{p}$
and Hypothesis~\ref{hyp:dissipa9} yield
\begin{align*}
\ud\,|\yY^x_t-\zZ_t(0)|^{p}_{c}&=-p|\yY^x_t-\zZ_t(0)|^{p-2}_c\<\yY^x_t-\zZ_t(0),Db(X^0_t(x))(\yY^x_t-\zZ_t(0))\>\ud t\\
&\qquad +p|\yY^x_t-\zZ_t|^{p-2}_c\<\yY^x_t-\zZ_t(0),(Db(0)-Db(X^0_t(x)))\zZ_t(0)\>\ud t\\
&\lqq -p\delta|\yY^x_t-\zZ_t(0)|^{p-2}_c|\yY^x_t-\zZ_t(0)|^2 \ud t
+p|\yY^x_t-\zZ_t|^{p-1}_c C(|x|)|X^0_t(x)||\zZ_t(0)|\ud t\\
&= -p\delta|\yY^x_t-\zZ_t(0)|^{p}_c \ud t+p\delta c^{p}\ud t
+p c^{p-1} C(|x|)|X^0_t(x)||\zZ_t(0)|\ud t,
\end{align*}
where $C(|x|)=\max\limits_{|u|\lqq |x|+1}|D^2b(u)|$.  
On the event $(\dD^0_t)^{\cc}$ we have
\begin{align*}
\ud\,|\yY^x_t-\zZ_t(0)|^{p}_{c}\lqq -p\delta|\yY^x_t-\zZ_t(0)|^{p}_c \ud t+p\delta c^{p}\ud t
+pc^{p-1} C(|x|)|x|  e^{-(\delta/2) t}\ud t
\end{align*}
due to $|X^0_t(x)|\lqq e^{-\delta t}|x|$ for all $t\gqq 0$ and $x\in \RR^d$.
Hence
\begin{align*}
\ud\, \EE[|\yY^x_t-\zZ_t(0)|^{p}_{c}\ind((\dD^0_t)^{\cc})] &\lqq -p\delta\EE[|\yY^x_t-\zZ_t(0)|^{p}_c\ind((\dD^0_t)^{\cc})]\ud t +p\delta c^{p}\ud t
+pc^{p-1} C(|x|)|x|  e^{-(\delta/2) t}\ud t.
\end{align*}
The Gr\"onwall inequality in \cite{Mikami} implies
\begin{align*}
\EE[|\yY^x_t-\zZ_t(0)|^{p}\ind((\dD^0_t)^{\cc})]& \lqq 
\EE[|\yY^x_t-\zZ_t(0)|^{p}_{c}\ind((\dD^0_t)^{\cc})] \lqq 
c^{p}
+pc^{p-1} C(|x|)|x|e^{-p\delta t}\int_{0}^{t}e^{p\delta s} e^{-(\delta/2) s}\ud s.
\end{align*}
Then 
\[
\limsup_{t\ra \infty}
\EE[|\yY^x_t-\zZ_t(0)|^{p}\ind((\dD^0_t)^{\cc})] 
\lqq c^{p}\quad \textrm{ for all } c>0,
\]
which yields $\lim_{t\ra \infty}
\EE[|\yY^x_t-\zZ_t(0)|^{p}\ind((\dD^0_t)^{\cc})]=0$.

\noindent We continue with the term $I_2$.
By the H\"older inequality 
for  the index $p'/p$ where 
$p'=(p+p_*)/2$ and $r$ the conjugate index of $p'/p$ we have
\begin{align}
\EE[|\yY^x_t-\zZ_t(0)|^{p}&\ind(D_t)]\lqq 
\EE[|\yY^x_t|^{p}\ind(D_t)]+\EE[|\zZ_t(0)|^{p}\ind(D_t)]\nonumber\\
&\lqq 
(\EE[|\yY^x_t|^{p'}])^{p/p'}(\PP(D_t))^{1/r}+(\EE[|\zZ_t(0)|^{p'}])^{p/p'}(\PP(D_t))^{1/r}.\label{eq:holder99}
\end{align} 
By \eqref{eq:polygrows} and \eqref{eq:siorpaesnueva} the right-hand side of
\eqref{eq:holder99} tends to zero as $t\ra \infty$. As a consequence we have 
$\Wp(\yY^x_t,\zZ_t(0))\lqq (\EE[|\yY^x_t-\zZ_t(0)|^{p}])^{1\wedge (1/p)}$ which tends to zero as $t\ra \infty$.
By \eqref{eq: OUthermalizationW2} and \eqref{eq: triangulo} we obtain 
\eqref{eq: inhomOUthermState}.
\end{proof}
\subsection{\textbf{Auxiliary moment estimates}} 
\begin{lem}\label{lem:p>2}
For any $2\lqq p< p_*$ (and $p = 2$ if $p_* = 2$) there is a function of at most polynomial order $R(t)$ as $t \ra \infty$ and $\e_0\in (0, 1]$ such that for any $t\gqq 0$ and $0 < \e < \e_0$ we have  
\[
\EE[|X^\e_t(x)-X^0_t(x)|^{p_*}] \lqq \e^{p} R(t). 
\]
\end{lem}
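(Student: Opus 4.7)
Set $\Delta^\varepsilon_t := X^\varepsilon_t(x) - X^0_t(x)$, so $\Delta^\varepsilon_0 = 0$ and
\[
\ud \Delta^\varepsilon_t = -\bigl(b(X^\varepsilon_t(x)) - b(X^0_t(x))\bigr)\ud t + \varepsilon\,\ud L_t.
\]
Since $p_* \gqq 2$, the map $G(u):=|u|^{p_*}$ is $\cC^2(\RR^d)$ with $\nabla G(u) = p_* |u|^{p_*-2}u$ and Hessian of operator norm $\lqq C_{p_*}|u|^{p_*-2}$. The plan is to apply It\^o's formula to $G(\Delta^\varepsilon_t)$ via the L\'evy-It\^o decomposition \eqref{eq:LevyIto}, localize at $\tau_N := \inf\{t\gqq 0 : |\Delta^\varepsilon_t|\gqq N\}$ so that the Brownian and compensated-small-jump martingales vanish in expectation, derive a closed differential inequality for $\phi(t) := \EE[|\Delta^\varepsilon_t|^{p_*}]$, and then pass $N \to \infty$ using the a priori $p_*$-moment bound for $X^\varepsilon_t(x)$ (available from Hypotheses~\ref{hyp:dissipa9}--\ref{hyp:moments9} by an argument analogous to the derivation of \eqref{eq:momentos}, now with $\gamma = p_*$).

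\textbf{Term-by-term estimates and absorption.} Hypothesis~\ref{hyp:dissipa9} bounds the main drift contribution by
\[
-p_*\,\EE\bigl[|\Delta^\varepsilon_s|^{p_*-2}\<\Delta^\varepsilon_s,\, b(X^\varepsilon_s(x)) - b(X^0_s(x))\>\bigr] \lqq -p_*\delta\,\phi(s),
\]
while the deterministic drift of $L$ adds $\lqq p_*\varepsilon|a|\,\EE[|\Delta^\varepsilon_s|^{p_*-1}]$, and the Gaussian contribution is $\lqq C\varepsilon^2\EE[|\Delta^\varepsilon_s|^{p_*-2}]$. For the small-jump compensator ($|z|\lqq 1$), a second-order Taylor expansion together with the Hessian bound yields
\[
\bigl|G(u+\varepsilon z) - G(u) - \varepsilon\<z,\nabla G(u)\>\bigr| \lqq C\varepsilon^2|z|^2\bigl(|u|^{p_*-2} + \varepsilon^{p_*-2}|z|^{p_*-2}\bigr),
\]
whose $\nu$-integral is $\lqq C\varepsilon^2 \EE[|\Delta^\varepsilon_s|^{p_*-2}] + C\varepsilon^{p_*}$. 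For the large-jump part ($|z|>1$), the mean value theorem combined with $(a+b)^{p_*-1}\lqq C_{p_*}(a^{p_*-1}+b^{p_*-1})$ gives
\[
|G(u+\varepsilon z) - G(u)| \lqq C\varepsilon|z|\,|u|^{p_*-1} + C\varepsilon^{p_*}|z|^{p_*},
\]
and Hypothesis~\ref{hyp:moments9} together with $|z|^q \lqq |z|^{p_*}$ on $|z|>1$ for $q \in [1,p_*]$ makes every involved $\nu$-integral finite. Young's inequality, with conjugate exponents $p_*/(p_*-2)$ and $p_*/2$ when $p_* > 2$ (trivially when $p_* = 2$), then gives for any $\eta > 0$
\[
\varepsilon^2 v^{p_*-2} \lqq \eta\, v^{p_*} + C_\eta\,\varepsilon^{p_*}, \qquad \varepsilon\, v^{p_*-1} \lqq \eta\, v^{p_*} + C_\eta\,\varepsilon^{p_*}, \quad v\gqq 0,
\]
so that, choosing $\eta$ small enough and passing $N \to \infty$ by monotone convergence, one obtains
\[
\phi'(t) \lqq -c\,\phi(t) + K\,\varepsilon^{p_*}, \qquad c := p_*\delta/2,
\]
with $K$ independent of $\varepsilon$ and $t$. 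Since $\phi(0) = 0$, Gr\"onwall yields $\phi(t) \lqq (K/c)\,\varepsilon^{p_*}$ uniformly in $t \gqq 0$; because $\varepsilon < 1$ and $p \lqq p_*$, we have $\varepsilon^{p_*} \lqq \varepsilon^p$, which proves the claim with the constant function $R(t) \equiv K/c$ (a polynomial of order zero).

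\textbf{Main obstacle.} The chief technical point is the rigorous interchange of expectation with time and $\nu$-integration after applying It\^o's formula to the jump process: one must verify that the Brownian and compensated-small-jump martingale parts genuinely have zero expectation after stopping at $\tau_N$, that the large-jump compensator integrands are $\nu$-integrable under Hypothesis~\ref{hyp:moments9}, and that $N \to \infty$ can be taken using the a priori $p_*$-moment bound for $X^\varepsilon_t(x)$. Once this bookkeeping is settled, the Taylor/MVT estimates, the Young absorption, and the Gr\"onwall step are mechanical.
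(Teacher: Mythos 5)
Your proposal is correct, and it takes a genuinely different and tighter route than the paper. Both start identically: It\^o's formula for $G(u)=|u|^{p_*}$ applied to $\Delta^\e_t$, Hypothesis~\ref{hyp:dissipa9} to produce the dissipative term $-p_*\delta\,\phi(t)$, and Taylor/MVT estimates on the Gaussian and jump contributions. The divergence is in how the resulting term $\e^2\,\EE[|\Delta^\e_s|^{p_*-2}]$ (and the linear term $\e\,\EE[|\Delta^\e_s|^{p_*-1}]$) is closed. The paper uses the unweighted Young bound $v^{p_*-2}\lqq v^{p_*}+1$, which only yields $\phi(t)\lqq \e^2 K t^2$ after absorbing $\e^2 K$ into the dissipativity for small $\e$; it then bootstraps this a priori bound via H\"older's inequality, obtaining after $i$ iterations the exponent $2\sum_{j=0}^{i}\big(\tfrac{p_*-2}{p_*}\big)^j$, which tends to $p_*$ as $i\to\infty$, so that for each fixed $p<p_*$ a finite number $i_0(p)$ of iterations suffices, at the cost of an accumulating polynomial factor $R_{i_0}(t)$. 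You instead use the weighted Young inequality
$\e^2 v^{p_*-2}\lqq \eta\,v^{p_*}+C_\eta\,\e^{p_*}$ (conjugate exponents $\tfrac{p_*}{p_*-2}$, $\tfrac{p_*}{2}$) and analogously $\e\,v^{p_*-1}\lqq \eta\,v^{p_*}+C_\eta\,\e^{p_*}$, which allocates the full $\e$-power to the additive constant in a single step. Choosing $\eta$ small relative to $p_*\delta$ yields $\phi'(t)\lqq -c\,\phi(t)+K\e^{p_*}$ with $c,K$ independent of $\e$ and $t$, and Gr\"onwall gives $\phi(t)\lqq (K/c)\,\e^{p_*}$, a bound uniform in $t$, with $R(t)$ constant and no smallness restriction on $\e$ coming from the absorption step. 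This is strictly stronger than the lemma's statement (which only asks for $\e^p R(t)$ with $R$ of polynomial growth for $p<p_*$) and avoids the bootstrap entirely. The bookkeeping issues you flag (localization at $\tau_N$, zero expectation of the stopped martingale parts, $\nu$-integrability of the large-jump compensator via $|z|^q\lqq|z|^{p_*}$ on $|z|>1$ for $q\lqq p_*$, and the a priori $p_*$-moment of $X^\e_t(x)$ needed to pass $N\to\infty$) are the same ones the paper needs and handles in the same standard way, so there is no gap.
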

\begin{proof}
First note that for  $G(u) = |u|^{p_*},p_*\gqq 2$ 
we have 
\begin{align*}
\nabla G(u) = p_* |u|^{p_*-2} u = p_* (|u|^2)^{\frac{p_*-2}{2}} u, \mbox{ with }\pd_i G(u) = p_* |u|^{p_*-2} u_i,
\end{align*}
\begin{align*}
 \sum_{ij} \pd_i \pd_j G(u)&\lqq  p_* |u|^{p_*-4} \big(d |u|^{2}+\sum_{ij} \frac{(p_*-2)}{2} (u_j^2 + u_i^2)\big)= p_* (p_*-1 )d |u|^{p_*-4} |u|^{2}.
 \end{align*}
Recall the notation \eqref{eq:LevyIto} for $L$.
The It\^o formula for $\Theta^\e_t= X^\e_t(x)-X^0_t(x)$ yields
\begin{align*}
\ud |\Theta^\e_t|^{p_*} 
&= -p_* |\Theta^\e_t|^{p_*-2} \<\Theta^\e_t, b(X^{\e}_t(x))-b(X^0_t(x))\> \ud t 
+ p_* |\Theta^\e_t|^{p_*-2} \<\Theta^\e_t, \e \Sigma^{1/2} \ud B_t\>\\
&\qquad+\frac{\e^2}{2} \trace(\Sigma^{1/2} \mbox{Hess} G(\Theta^\e_t)(\Sigma^{1/2})^{*}) \ud t\\
&\qquad  + \int_{\RR^d} \big(|\Theta^\e_t+ \e  z|^{p_*} -|\Theta^\e_t|^{p_*}- 
p_* |\Theta^\e_t|^{p_*-2} \<\Theta^\e_t, \e  z\> \ind\{|z|\lqq 1\}\big) \nu(\ud z) \ud t \\
&\qquad + \int_{\RR^d} \big(|\Theta^\e_t+ \e  z|^{p_*} -|\Theta^\e_t|^{p_*}\big) \ti N(\ud t, \ud z). 
\end{align*}
Taking expectation yields  
\begin{align*}
\EE[|\Theta^\e_t|^{p_*}]
&\lqq  - \delta p_* \int_0^t \EE\Big[|\Theta^\e_s|^{p_*}\Big] \ud s +\e^2 p_* (p_*-1 )d \trace(\Sigma^{1/2}(\Sigma^{1/2})^{*})\int_0^t \EE\Big[ |\Theta^\e_s|^{p_*-2} \Big] \ud s\\
&\qquad  + \int_0^t\int_{\RR^d} \EE\Big[|\Theta^\e_t+ \e  z|^{p_*} -|\Theta^\e_t|^{p_*}- 
p_*|\Theta^\e_t|^{p_*-2} \<\Theta^\e_t, \e  z\>\Big] \nu(\ud z) \ud s.
\end{align*}
By the mean value theorem we have 
\begin{align*}
\EE\Big[|\Theta^\e_t+  \e z|^{p_*} -|\Theta^\e_t|^{p_*}- 
p_* |\Theta^\e_t|^{p_*-2} \<\Theta^\e_t, \e z\>\Big]&\lqq \EE\Big[ p_* (p_*-1 )d \iint_{0}^1 |\Theta^\e_t+ \theta \vartheta \e  z|^{p_*-2} 
\ud \theta \ud \vartheta\Big] |\e  z|^2 \\
&\lqq (1\vee 2^{p_*-2}) \EE\Big[ p_* (p_*-1 )d ( |\Theta^\e_t|^{p_*-2}+ |\e z|^{p_*-2}) \Big] |\e  z|^2 \\
&\lqq (1\vee 2^{p_*-2}) p_* (p_*-1 )d \EE\Big[ |\Theta^\e_t|^{p_*-2}\Big] \Big(|\e z|^2 + |\e z|^{p_*} \Big)
\end{align*}
and 
\begin{align*}
&\int_{\RR^d} \EE\Big[|\Theta^\e_t+ \e z|^{p_*} -|\Theta^\e_t|^{p_*}- 
p |\Theta^\e_t|^{p_*-2} \<\Theta^\e_t, \e z\>\Big] \nu(\ud z)\\
&\hspace{2cm}\lqq C_{p_*, d} \Big(\int_{\RR^d} | z|^2 \nu(dz) + \int_{\RR^d} | z|^{p_*} \nu(dz)\Big) \e^2 \EE\Big[ |\Theta^\e_t|^{p_*-2}\Big].
\end{align*}
Hence there is a positive constant $K$ such that
\begin{align}\label{eq:boot}
\EE[|\Theta^\e_t|^{p_*}]
&\lqq  - \delta p_* \int_0^t \EE\Big[|\Theta^\e_s|^{p_*}\Big] \ud s +\e^2 K\int_0^t \EE\Big[ |\Theta^\e_s|^{p_*-2} \Big] \ud s.
\end{align}
For $p_*=2$ we have directly 
$
\EE[|\Theta^\e_t|^{p_*}]\lqq  \e^2 K t.
$
For $p_*>2$ we continue in \eqref{eq:boot} with Young's inequality 
\begin{align*}
\EE[|\Theta^\e_t|^{p_*}]
&\lqq  - \delta p_*\int_0^t \EE\Big[|\Theta^\e_s|^{p_*}\Big] \ud s +\e^2 K\int_0^t \EE\Big[ |\Theta^\e_s|^{p_*-2} \Big] \\
&\lqq  - \delta p_*\int_0^t \EE\Big[|\Theta^\e_s|^{p_*}\Big] \ud s +\e^2 K\int_0^t \EE\Big[ |\Theta^\e_s|^{p_*} \Big] \ud s+ \e^2 Kt\nonumber\\
&\lqq  - (\nicefrac{\delta}{2}) p_*\int_0^t \EE\Big[|\Theta^\e_s|^{p_*}\Big] \ud s + \e^2 Kt\nonumber
\end{align*}
for $\e<(\frac{\delta p_*}{2K})^{1/2}$.
Gr\"onwall's lemma applied to the preceding estimate yields the a priori estimate
$
\EE[|\Theta^\e_t|^{p_*}] \lqq \e^2 K t^2 =: \e^2 R_0(t).
$
Inserting the a priori estimate in \eqref{eq:boot} and using the H\"older inequality for $p_*>2$ we obtain 
\begin{align*}
\EE[|\Theta^\e_t|^{p_*}]
&\lqq  - \delta p_*\int_0^t \EE\Big[|\Theta^\e_s|^{p_*}\Big] \ud s +\e^2 K\int_0^t \EE\Big[ |\Theta^\e_s|^{p_*-2} \Big] \ud s\\
&\lqq  - \delta p_*\int_0^t \EE\Big[|\Theta^\e_s|^{p_*}\Big] \ud s +\e^2 K\int_0^t \EE\Big[ |\Theta^\e_s|^{p_*} \Big]^\frac{p_*-2}{p_*}\ud s\\
&\lqq  \e^{2+2 \frac{p_*-2}{p_*}} K^{1+ \frac{p_*-2}{p_*}} 
\int_0^t s^{2\frac{p_*-2}{p_*}}\ud s=:
\e^{2+2 \frac{p_*-2}{p_*}} R_1(t).
\end{align*}
By induction we deduce after the $i$-th iterations of the bootstrap the estimate 
\begin{align*}
\EE[|\Theta^\e_t|^{p_*}] \lqq \e^{2 \sum_{j=0}^i (\frac{p_*-2}{p_*})^j} R_i(t)
\end{align*}
for a polynomial order function $R_i(t)$. 
Clearly, 
$\lim_{i\ra  \infty} 2 \sum_{j=0}^i \Big(\frac{p_*-2}{p_*}\Big)^j = p_*$
and therefore for any $0<p<p_*$ there is an iteration $i_0=i_0(p_*,p)$ such that we obtain
$
\EE[|\Theta^\e_t|^{p_*}] \lqq \e^{p} R_{i_0}(t) 
$.
This finishes the proof of the lemma.
\end{proof}
\begin{lem}\label{lem:p<2} 
Let $p_*>0$. Then for any $p\in (0,2\wedge p_*)$
there exists a positive constant $C_{p}$ 
such that for any $t\gqq 0$ and $\e>0$
 we have  
\[
\EE[|X^\e_t(x)-X^0_t(x)|^{p}] \lqq \e^{p}(1+C_{p}\cdot t). 
\]
\end{lem}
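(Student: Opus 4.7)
The plan is to adapt the It\^o-formula argument of Lemma~\ref{lem:p>2} to the regime $p \in (0, 2 \wedge p_*)$, where $u \mapsto |u|^p$ is no longer $\cC^2$ at the origin. To overcome this I would apply It\^o's formula to the smoothed approximation $G_c(u) := |u|^p_c = (|u|^2+c^2)^{p/2}$ introduced in Subsection~\ref{app:B2}, with $c>0$ to be tuned to $c=\e$ at the end. Writing $\Theta^\e_t := X^\e_t(x)-X^0_t(x)$, one has $\Theta^\e_0=0$ and
\begin{align*}
\ud \Theta^\e_t = -(b(X^\e_t(x))-b(X^0_t(x)))\,\ud t + \e\,\ud L_t.
\end{align*}

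The It\^o formula with the L\'evy-It\^o decomposition \eqref{eq:LevyIto}, taken in expectation, kills the Brownian and compensated jump martingales. The remaining contributions are: (i) the pathwise drift; (ii) the Brownian diffusion correction; (iii) the compensator of small jumps (with gradient subtraction); and (iv) the compensator of large jumps (without subtraction). By Hypothesis~\ref{hyp:dissipa9}, together with $|\Theta^\e_t|^2 = |\Theta^\e_t|_c^2 - c^2$ and $|\Theta^\e_t|_c^{p-2} \leq c^{p-2}$, term (i) is bounded by $-p\delta\,\EE[G_c(\Theta^\e_t)]+p\delta c^p$. Terms (ii) and (iii) are controlled by the Hessian estimate \eqref{eq:hessianest} combined with $\int_{|z|\lqq 1}|z|^2\,\nu(\ud z)<\infty$, producing a source of order $\e^2 c^{p-2}$.

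Term (iv) is the genuinely new step. The elementary inequality $(a+b)^{p/2}\lqq a^{p/2}+b^{p/2}$, valid for $p/2\lqq 1$, applied to $|u+\e z|^2+c^2 \lqq (|u|^2+c^2) + 2|u||\e z| + \e^2|z|^2$ together with Young's inequality, gives a pointwise bound
\begin{align*}
G_c(u+\e z)-G_c(u) \lqq C_1\, G_c(u) + C_2\,\e^p|z|^p,
\end{align*}
which is integrable against $\nu$ on $\{|z|>1\}$ since $\nu(\{|z|>1\})<\infty$ and, by Hypothesis~\ref{hyp:moments9} with $p<p_*$, $\int_{|z|>1}|z|^p\,\nu(\ud z)<\infty$. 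Collecting all four contributions and choosing $c=\e$ balances $c^p$, $\e^2 c^{p-2}$ and $\e^p$ at the common order $\e^p$. After absorbing the linear $\EE[G_\e(\Theta^\e_t)]$ term into the negative drift (or simply dropping it when no longer useful), one reaches
\begin{align*}
\frac{\ud}{\ud t}\EE[G_\e(\Theta^\e_t)] \lqq K\,\e^p,
\end{align*}
and integration with the initial value $\EE[G_\e(\Theta^\e_0)]=\e^p$ yields $\EE[G_\e(\Theta^\e_t)]\lqq \e^p(1+C_p t)$. Since $|u|^p\lqq |u|_c^p$, the statement follows.

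The main obstacle is the large-jump compensator in step (iv): the natural pointwise estimate produces a positive coefficient proportional to $\nu(\{|z|>1\})$ in front of $\EE[G_\e(\Theta^\e_t)]$, which must be dominated by the dissipative gain $p\delta$ in order to retain a Gr\"onwall bound that is only linear in $t$. When $\nu(\{|z|>1\})$ is large, a cruder bound would yield an exponential-in-$t$ factor. Circumventing this requires either a sharper pointwise inequality for $G_\e(u+\e z)-G_\e(u)$ on $\{|z|>1\}$ (obtained for instance via the mean value theorem combined with the Young interpolation $\e|u|^{p-1}\lqq \e^p+|u|^p$ and avoiding the spurious $\nu(\{|z|>1\})$ factor), or a decomposition of $L$ into its compensated small-jump part and a compound Poisson large-jump part, the latter treated pathwise so that the linear-in-$t$ factor appears directly from the jump count.
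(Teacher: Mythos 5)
Your overall framework matches the paper exactly: apply It\^o's formula to the smoothed norm $G_c(u)=|u|^p_c$ applied to $\Theta^\e_t = X^\e_t(x)-X^0_t(x)$, take expectations to kill the Brownian and compensated jump martingales, bound the drift via Hypothesis~\ref{hyp:dissipa9} (your manipulation $|\Theta|^2=|\Theta|_c^2-c^2$ and $|\Theta|_c^{p-2}\lqq c^{p-2}$ is precisely the paper's), bound the diffusion and small-jump compensator via the Hessian estimate~\eqref{eq:hessianest}, and finally set $c=\e$.

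The gap you flag at the end is, however, the real gap, and you do not close it. Your first pointwise bound $G_c(u+\e z)-G_c(u)\lqq C_1 G_c(u)+C_2\e^p|z|^p$ has $C_1>0$ (e.g.\ $C_1=2^{p/2}-1$), so integrating against $\nu$ on $\{|z|>1\}$ produces a coefficient $C_1\nu(\{|z|>1\})$ in front of $\EE[G_c(\Theta^\e_t)]$; if $\nu(\{|z|>1\})C_1>p\delta$, Gr\"onwall only gives exponential growth, contradicting the linear-in-$t$ bound the lemma asserts. You correctly diagnose this, but the remedy you propose — mean value theorem plus the Young interpolation $\e|u|^{p-1}\lqq \e^p+|u|^p$ — is only available for $p>1$ (the conjugate exponent $p/(p-1)$ is undefined for $p\lqq 1$), and that remedy is exactly what the paper does in that regime, with the Young weight tuned so that the coefficient of $\int_0^t\EE[|\Theta^\e_s|_c^p]\,\ud s$ is $p\delta/2$, not $\nu(\{|z|>1\})$ (the large $\nu$-mass is pushed into the multiplicative constant of the $\e^p t$ term, which is harmless). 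For $p\in(0,1]$, the paper does something different and simpler: the subadditivity $|x+y|^p_c\lqq |x|^p+|y|^p+c^p$ gives directly $G_c(u+\e z)-G_c(u)\lqq \e^p|z|^p+c^p$, with no $G_c(u)$ term at all, so there is nothing to absorb. Your proposal never treats $p\lqq 1$ (which is the generic case for this lemma, e.g.\ all $\alpha$-stable noise with $\alpha\lqq 1$), and for $p>1$ it only sketches the fix without carrying it out. So the main step — controlling the large-jump compensator without an exponential-in-$t$ penalty, separately for $p\lqq 1$ and $p>1$ — is missing.

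Your second proposed alternative (decompose $L$ into compensated small jumps plus a compound Poisson large-jump part treated pathwise) is not the paper's route and would need its own analysis of how the contraction between jumps interacts with the jump counts; as stated it is only a pointer, not an argument.
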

\begin{proof}
Without loss of generality let $p_*\in (0,2]$.
It\^o's formula yields for $\Theta^\e_t = X^{\e}_t(x) -X^0_t(x)$ and the function $G(z)=|z|^{p}_c$
\begin{align*}
\ud |\Theta^\e_t|_c^p 
&= -p |\Theta^\e_t|_c^{p-2} \<\Theta^\e_t, b(X^{\e}_t(x))-b(X^0_t(x))\> \ud t 
+ p |\Theta^\e_t|_c^{p-2} \<\Theta^\e_t, \e \Sigma^{1/2} \ud B_t\>\\
&\qquad+\frac{\e^2}{2} \trace(\Sigma^{1/2} \mathrm{Hess}G(\Theta^\e_t)(\Sigma^{1/2})^{*}) \ud t\\
&\qquad + \int_{\RR^d} \big(|\Theta^\e_t+ \e z|_c^p -|\Theta^\e_t|_c^p- 
p |\Theta^\e_t|_c^{p-2} \<\Theta^\e_t, \e  z\> \ind\{|z|\lqq 1\}\big) \nu(\ud z) \ud t \\
&\qquad + \int_{\RR^d} \big(|\Theta^\e_t+ \e  z|_c^p -|\Theta^\e_t|_c^p\big) \ti N(\ud t, \ud z). 
\end{align*}
Taking expectation and using Hypothesis~\ref{hyp:dissipa9} we have
\begin{align*}
\EE[|\Theta^\e_t|_c^p]
&\lqq c^p - p \delta \int_0^t \EE[|\Theta^\e_t|_c^{p-2}|\Theta^{\e}_t|^2] \ud s +  \e^2 \int_{0}^t \trace(\Sigma^{1/2}\mathrm{Hess}G(\Theta^\e_s)(\Sigma^{1/2})^{*})\ud s\\
&\qquad  + \int_0^t\int_{\RR^d} \EE\Big[|\Theta^\e_t+ \e z|_c^p -|\Theta^\e_t|_c^p- 
p |\Theta^\e_t|_c^{p-2} \<\Theta^\e_t, \e  z\>\ind\{|z|\lqq 1\}\Big] \nu(\ud z) \ud s.
\end{align*}
Since 
$|x|^2=|x|^2_c-c^2$, we obtain
\begin{align}
\EE[|\Theta^\e_t|_c^p]
&\lqq c^p - p \delta \int_0^t \EE[|\Theta^\e_t|_c^{p} ]\ud s +p\delta c^{p}t +  \e^2 c^{p-2}tC(p,d)\trace(\Sigma^{1/2}(\Sigma^{1/2})^{*})\nonumber\\
&\qquad  + \int_0^t\int_{\RR^d} \EE\Big[|\Theta^\e_t+ \e  z|_c^p -|\Theta^\e_t|_c^p- 
p |\Theta^\e_t|_c^{p-2} \<\Theta^\e_t, \e  z\>\ind\{|z|\lqq 1\}\Big] \nu(\ud z) \ud s.\label{eq:expectation}
\end{align}  
In the sequel we estimate the second order term for small increments with the help of \eqref{eq:hessianest} by
\begin{align}
\int_{0}^{t}&\int_{|z|\lqq 1}
\EE\Big[|\Theta^\e_s+ \e z|_c^p -|\Theta^\e_s|_c^p- 
p |\Theta^\e_t|_c^{p-2} \<\Theta^\e_s, \e z\>\Big] \nu(\ud z)\ud s \nonumber\\
&\hspace{2cm}
\lqq C(p,d)\e^2 c^{p-2} t \int_{|z|\lqq 1} | z|^2\nu(\ud z)=:K_1 \e^2 c^{p-2} t.\label{eq:smalljumps}
\end{align}
For the large increments, we use the mean value theorem and obtain
\begin{align*}
&\int_0^t\int_{|z|>1} \EE\Big[|\Theta^\e_s+ \e  z|_c^p -|\Theta^\e_s|_c^p\Big] \nu(\ud z)\ud s= p\e \int_0^t\int_{|z|>1}\int_{0}^{1} \EE[|\Theta^\e_s+\theta \e  z|^{p-1}_c ]| z|\ud \theta\nu(\ud z) \ud s.
\end{align*}
For $p\in (0,1]$, note that
$|x+y|^p_c\lqq |x|^p+|y|^p+c^p$ for all $x,y \in \RR^d$. Then we have for all $t\gqq 0$ 
\begin{align}\label{eq:proot9} 
\int_0^t \int_{|z|>1} \EE\Big[|\Theta^\e_s+ \e  z|_c^p -|\Theta^\e_s|_c^p\Big] \nu(\ud z)\ud s &\lqq 
\int_0^t\int_{|z|>1} (\e^{p}|  z|^p+c^p) \nu(\ud z)\ud s\nonumber\\
&=t\e^p \int_{|z|>1}|z|^p \nu(\ud z)+tc^p\nu(\{|z|>1\}).
\end{align}
For $p>1$, due to $|x+y|^{p-1}_c\lqq |x|^{p-1}+|y|^{p-1}+c^{p-1}$ for all $x,y\in \RR^d$, we split the intermediate value as follows
\begin{align} 
&p\e \int_0^t\int_{|z|>1}\int_{0}^{1} \EE[|\Theta^\e_s+\theta \e  z|^{p-1}_c ]| z|\ud \theta\nu(\ud z) \ud s\nonumber\\
&\lqq 
p\e \int_0^t\int_{|z|>1} \EE\Big[|\Theta^\e_s|^{p-1} \Big]| z|\ud \nu(\ud z) \ud s
+p\e \int_0^t\int_{|z|>1}
|\e  z|^{p-1} |z|\nu(\ud z) \ud s
+p\e c^{p-1}t \nu(\{|z|>1\})
\nonumber\\
&= 
p \EE\Big[\int_0^t \e|\Theta^\e_s|^{p-1} \ud s\Big]
\int_{|z|>1}  | z|\ud \nu(\ud z) 
+p\e^p t \int_{|z|>1}
| z|^{p} \nu(\ud z)
+p\e c^{p-1}t \nu(\{|z|>1\})
\nonumber\\
&\qquad \lqq 
tp(1/K_3)^p \e^p+\frac{p\delta}{2}\int_{0}^{t}\EE\Big[|\Theta^\e_s|^{p}_c\Big]\ud s+p\e^p t \int_{|z|>1}
| z|^{p} \nu(\ud z)+p\e c^{p-1}t \nu(\{|z|>1\}),\label{eq:bigjumps}
\end{align}
where we have used in the last line the following weighted Young inequality 
\begin{align*}
 \int_0^t K_2\e|\Theta^\e_s|^{p-1} \ud s &\lqq 
(1/K_3)^p t K^p_2\e^p+K_3^{p/(p-1)}\int_{0}^{t}|\Theta^\e_s|^{p} \ud s\lqq  
t(1/K_3)^p \e^p+\frac{\delta}{2}\int_{0}^{t}|\Theta^\e_s|^{p}_c \ud s
\end{align*}
with 
$K_2=\int_{|z|>1}  | z|\ud \nu(\ud z)+1$ and
 $K_3=(\nicefrac{\delta}{2})^{p/(p-1)}$ followed by $|y|\lqq |y|_c$. 
Combining \eqref{eq:smalljumps} with \eqref{eq:bigjumps} for $p\gqq 1$, and 
\eqref{eq:proot9} with \eqref{eq:bigjumps} for $p< 1$, respectively,
 in \eqref{eq:expectation} we obtain
\begin{align*}
\EE[|\Theta^\e_t|_c^p]
&\lqq c^p - \frac{p \delta}{2} \int_0^t \EE[|\Theta^\e_t|_c^{p} ]\ud s +p\delta c^{p}t + K_0\e^2 c^{p-2}t+K_1\e^2 c^{p-2} t \\
&\qquad + tp(1/K_3)^p \e^p\cdot
\ind\{p\gqq 1\}
+p\e^p t \int_{|z|>1}
| z|^{p} \nu(\ud z)+p\e c^{p-1}t \nu(\{|z|>1\}),
\end{align*}
where $K_0=C(p,d)\trace(\Sigma^{1/2}(\Sigma^{1/2})^{*})$.
Since $|x|^p\lqq |x|^p_c$, the choice $c=c_\e=\e$ yields for all $t\gqq 0$
$
\EE[|\Theta^\e_t|^p]\lqq \e^p (1+Ct)
$
for some constant $C=C(p,\delta)$.
This completes the proof of the lemma.
\end{proof}

\section*{\textbf{Acknowledgments}}
The authors would like to thank the anonymous referee for her/his valuable comments which has led to significant improvement of the manuscript.
The authors would like to thank Carlos Gustavo Tamm de Ara\'ujo Moreira (Gugu) at IMPA for clarifying comments on the Hartman-Grobman theorem. 

\section*{Declarations}
\subsection*{Funding} 
The research of GB has been supported by the Academy of Finland, via 
the Matter and Materials Profi4 University Profiling Action, 
an Academy project (project No. 339228)
and the Finnish Centre of Excellence in Randomness and STructures
(project No. 346306). GB also would like to express
his gratitude to University of Helsinki for all the facilities used along
the realization of this work. 
The research of MAH has been supported by the 
proyecto de la Convocatoria 2020-2021: ``Stochastic dynamics of systems perturbed with small Markovian noise with applications in biophysics, climatology and statistics'' of the Facultad de Ciencias at Universidad de los Andes. 
\subsection*{Availability of data and material}
Data sharing not applicable to this article as no datasets were generated or analyzed during the current study.
\subsection*{Conflict of interests} The authors declare that they have no conflict of interest.
\subsection*{Authors' contributions}
All authors have contributed equally to the paper.

\bibliographystyle{amsplain}

\begin{thebibliography}{40}
\addcontentsline{}{}{}
\bibitem{Appl}{\sc  Applebaum, D.} 
\textit{L\'evy processes and stochastic calculus}.  Second edition. Cambridge University Press, Cambridge, (2009).

\bibitem{BHPWA}{\sc Barrera, G., H\"ogele, M. A.,  Pardo, J. C.} Cutoff thermalization for Ornstein-Uhlenbeck systems with small L\'evy noise in the Wasserstein distance.
\textit{J. Stat. Phys.} {\bf 184},  no. 27, (2021).

\bibitem{BHPTV}{\sc Barrera, G., H\"ogele, M. A.,  Pardo, J. C.} The cutoff phenomenon in total variation for nonlinear Langevin systems with small layered stable noise.
\textit{Electron. J. Probab.} \textbf{26}, 1--76, (2021).

\bibitem{BJ}{\sc Barrera, G.,  Jara, M.} Abrupt convergence of stochastic small perturbations of one dimensional dynamical systems. \textit{J. Stat. Phys.} {\bf 163},  no. 1, (2016), 113-138.

\bibitem{BJ1}{\sc Barrera, G.,  Jara, M.} Thermalisation for small random perturbation of dynamical systems. \textit{Ann. Appl. Probab.} {\bf 30},  no. 3, (2020), 1164-1208.

\bibitem{BP}{\sc Barrera, G., Pardo, J. C.} Cut-off phenomenon for Ornstein-Uhlenbeck
processes driven by L\'evy processes.
\textit{Electron. J. Probab.} {\bf 25}, no. 15, (2020), 1-33.


\bibitem{PraGaZa}{\sc Da Prato, G., 
Gatarek, D., Zabczyk, J.
} 
Invariant measures for semilinear stochastic
equations.
\textit{Stochastic Anal. Appl.} {\bf 10}, no. 4, (1992), 387-408.





\bibitem{Ha60}
{\sc Hartman,~P.} . On local homeomorphisms of Euclidean spaces. 
\textit{Bol. Soc. Mat. Mexicana} \textbf{2}, no. 5, (1960), 220--241.




\bibitem{Kallianpur}{\sc Kallianpur, G., Sundar, P.} \textit{Stochastic analysis and diffusion processes}. Oxford University Press, Oxford, (2014).


\bibitem{Majka2020}{\sc Majka M.} 
A note on existence of global solutions and invariant measures for jump SDEs with locally one-sided Lipschitz drift. 
\textit{Probab. Math. Statist.} {\bf 40}, no. 1, (2020), 37-55.

\bibitem{Mikami}{\sc Mikami, T.} 
Asymptotic expansions of the invariant density of a Markov process with a small parameter. 
\textit{Ann. Inst. H. Poincar\'e Probab. Statist.} {\bf 24}, no. 3, (1988), 403-424.



\bibitem{PZ20}{\sc Panaretos, V., Zemel, Y.} \textit{An invitation to statistics in Wasserstein space}.
Springer International Publishing, (2020).

\bibitem{Pe01} 
{\sc Perko, L.}
\textit{Differential equations and dynamical systems}. 3rd ed. Texts in Applied Mathematics 7. Springer, New York, (2001).

\bibitem{Protter}{\sc Protter, P.} 
\textit{ Stochastic integration and differential equations}. Second edition.
Springer-Verlag, Berlin, (2004).

\bibitem{Bie}{\sc Saint Loubert Bi\'e, E.} \'Etude d'une EDPS conduite par un bruit poissonnien. 
\textit{Probab. Theory Related Fields} {\bf 111}, no. 2, (1998),  287-321.

\bibitem{Sa}{\sc Sato, K.} \textit{L{\'e}vy processes and infinitely divisible distributions}. Cambridge University Press, Cambridge, (1999).

\bibitem{SIORPAES}{\sc Siorpaes, P.}  Applications of pathwise Burkholder-Davis-Gundy inequalities. \textit{Bernoulli} {\bf 24}, 4B, (2018), 3222-3245.

\bibitem{SITU}{\sc Situ, R.} 
\textit{Theory of stochastic differential equations with jumps and applications}. Springer, New York, (2005).  

\bibitem{Tudoran}{\sc Tudoran, R. M.} 
On the coercivity of continuously differentiable vector fields.
\textit{Qual. Theory Dyn. Syst.} {\bf 19}, no. 2, Paper No. 58, (2020), 1-7.


\bibitem{Villani09}{\sc Villani, C.} 
\textit{Optimal transport. Old and new}. 
Springer-Verlag, Berlin, (2009).

\bibitem{Wang2010}{\sc Wang, J.} 
Regularity of semigroups generated by L\'evy type operators via coupling. 
\textit{Stochastic Process. Appl.} {\bf 120}, no. 9, (2010), 1680-1700.

\bibitem{IKEDAWATANABE}{\sc Watanabe, S., Ikeda, N.} \textit{Stochastic differential equations and diffusion processes}.
North-Holland Publishing Co., Amsterdam-New York, Kodansha, Ltd., Tokyo, (1981).


\end{thebibliography}

\end{document}